\newcommand{\R}{\mathbb{R}}
\newcommand{\Z}{\mathbb{Z}}
\renewcommand{\geq}{\geqslant}
\renewcommand{\leq}{\leqslant} 
\DeclareMathOperator{\spl}{sp}
\newtheorem{teo}{Theorem}[section]
\newtheorem*{teo*}{Theorem}
\newtheorem{lemma}[teo]{Lemma}
\newtheorem{prop}[teo]{Proposition}
\newtheorem*{prop*}{Proposition}
\newtheorem{defin}{Definition}
\newtheorem{remark}[teo]{Remark}
\newtheorem{cor}[teo]{Corollary}
\newtheorem{ex}[teo]{Example}
\newtheorem{question}{Question}
\pgfplotsset{compat=1.14}
\begin{document}
\title{Slice-torus concordance invariants and Whitehead doubles of links}

\author{Alberto Cavallo}
\address{Alfr\'ed R\'enyi Institute of Mathematics, 1053 Budapest, Hungary} 
\email{cavallo\_alberto@phd.ceu.edu} 
 
\author{Carlo Collari}
\address{Alfr\'ed R\'enyi Institute of Mathematics, 1053 Budapest, Hungary}
\email{carlo.collari.math@gmail.it}

\subjclass[2010]{57M27}
\date{}

\maketitle

\begin{abstract}
In the present paper we extend the definition of slice-torus invariant to links. We prove a few properties of the newly-defined slice-torus link invariants:  the behaviour under crossing change, a slice genus bound, an obstruction to strong sliceness, and a combinatorial bound. Furthermore, we provide an application to the computation of the splitting number. Finally, we use the slice-torus link invariants, and the Whitehead doubling to define new strong concordance invariants for links, which are proven to be independent from the corresponding slice-torus link invariant.
\end{abstract}

\section{Introduction}

The study of knots up to concordance has quite some relevance in low-dimensional topology, and it has been extensively pursued. Two (smooth) knots in $\mathbb{S}^3$ are said to be \emph{concordant} if they bound a properly (smoothly) embedded annulus in $\mathbb{S}^3\times [0,1]$. 
A knot is called \emph{slice} if it is concordant to the unknot.
The set of knots up to concordance, endowed with the operation of connected sum, is an infinitely generated Abelian group $\mathcal{C}$, called the \emph{concordance group}, whose neutral element is the class of slice knots.

The advent of knot homologies (such as knot Floer homology and Khovanov-Rozansky homologies) led to the definition of a variety of new tools to study concordance, to obstruct sliceness and to compute the slice genus (i.e. the minimal genus of a surface properly embedded in $\mathbb{D}^4$ bounding the given knot). The earlier among these tools are the Ozsv\'ath-Szab\'o invariant $\tau$ (\cite{O-Sz}) and the Rasmussen invariant $s$ (\cite{Rasmussen}). 
These invariants (once suitably normalized) share lot of properties, and Livingston (\cite{Livingston}) decided to study them in a more general framework. Livingston defined a class of invariants, which were later named by Lewark (\cite{Lewark14}), as follows.

\begin{defin}[\cite{Lewark14, Livingston}]
A \emph{slice-torus invariant} is a knot concordance invariant $\nu$ such that:
\begin{itemize}
\item[$\triangleright$]  $\nu: \mathcal{C} \to \mathbb{R}$ is a group homomorphism;
\item[$\triangleright$]  $\vert \nu(K) \vert \leq g_{4}(K)$, where $g_{4}$ denotes the slice genus, for each knot $K$;
\item[$\triangleright$]  For each torus knot $T(p,q)$ we have that
\[ \nu(T(p,q)) =  \frac{(p-1)(q-1)}{2}\:.\]
\end{itemize}
\end{defin}

The family of the slice torus invariants includes, aside from $\tau$ and $s/2$, also (a suitable normalization of) the $\mathfrak{sl}_n$ ($n\geq 3$) analogues of the Rasmussen invariant. These invariants, denoted by $s_{n}$, were introduced independently by Lobb (\cite{Lobb09}) and Wu (\cite{Wu}). The $s_n$'s provide orthogonal information with respect to $\tau$ and $s$, and they were shown to be linearly independent from $s$ and $\tau$ by Lewark (\cite{Lewark14}).

The slice-torus invariants can be used to produce other concordance invariants. For example, using the fact that if two knots are concordant then also their Whitehead doubles are concordant, Livingston and Naik (\cite{LivingstonNaik}) defined\footnote{These function were originally defined only for integer-valued slice-torus invariants. Of course, the same definition works for all slice-torus invariants, and most of the properties proved in \cite{LivingstonNaik} still hold.} the functions
\[ F_{\nu}(K)(t) = \nu(W_{+}(K,t))\qquad \overline{F}_{\nu}(K)(t) = \nu(W_{-}(K,t)),\quad t\in \mathbb{Z}, \]
where $\nu$ is a slice-torus invariant and $W_{\pm}(K,t)$ denotes the positive (resp. negative) $t$-twisted Whitehead double of $K$. These functions are non-increasing, non-constant, take values respectively in $[0,1]$ and $[-1,0]$, and assume both the maximal and the minimal possible values. In particular, if the slice-torus invariant is integer-valued all the information contained in each function can be condensed into a single integer. These integers, denoted by $t_{\nu}$ and $\overline{t}_{\nu}$, are defined as the maximal value of $t$ such that $F_{\nu}(K,t) $ and $ \overline{F}_{\nu}(K,t)$, respectively, assume their maximum. It is not difficult to see that $\overline t_{\nu}(K) = -t_{\nu}(-K^*)-1$, where $-K^*$ is the mirror image of $K$ with the orientation reversed, so these invariants contain the same amount of information. At the time of writing it is still unknown whether the invariant $t_{\nu}$ can provide new information with respect to $\nu$. In fact there are some hints in the opposite direction; for instance, it is known that $t_{\tau} = 2\tau -1$ (\cite[Theorem 1.5]{Hedden}) and it has been conjectured that $t_{s/2} = 3s/2 - 1$ (\cite{Park}).

The aim of the present paper is to extend these definitions and constructions to the case of links, and to describe some applications and examples. Before stating the main results of this paper let us recall a few basic facts about link concordance. The first thing one should point out is that the definition of concordance is no longer unique. Two oriented links in $\mathbb{S}^3$ are said to be
\begin{itemize}
\item[$\triangleright$]  \emph{weakly concordant} if there exists a genus $0$ connected, compact, oriented surface, properly embedded in $\mathbb{S}^3 \times [0,1]$, bounding the two links;
\item[$\triangleright$]  \emph{strongly concordant} if there exists a disjoint union of annuli, properly embedded in $\mathbb{S}^3 \times [0,1]$, such that each of them bounds a component of each link.
\end{itemize}
In particular, strongly concordant links should have the same number of components. A link is said to be \emph{weakly} (resp. \emph{strongly}) \emph{slice} if is weakly (resp. strongly) concordant to an unlink.
Similarly, one can define a (\emph{weak}) \emph{slice genus} and a \emph{strong slice genus}. The former is just the minimal genus of any connected, compact, oriented surface properly embedded in $\mathbb{S}^{3}\times [0,1]$ bounding the link.  The latter has a similar definition but one has to consider only the surfaces such that each connected component bounds exactly one component of the link. The (weak) slice genus of a link shall be denoted by $g_{4}$.

Almost all the slice-torus invariants known to the authors can be extended to strong concordance invariants of links (see \cite{BeliakovaWehrli, Cavallo, Jeong}) and give rise to bounds on the slice genus. Thus far these invariants have been studied separately. Motivated by the common properties of these extended slice-torus invariants, in  Section \ref{sec:stli} we give the definition of slice-torus link invariants. For now, the reader should keep in mind that these are $\mathbb{R}$-valued strong concordance invariants, and that when restricted to knots these invariants give rise to $\R$-valued slice-torus invariants (Corollary \ref{corollary:restriction}). Moreover, the slice-torus link invariants include (once properly rescaled and translated) the extension to links of $\tau$ (\cite{Cavallo}), $s$ (\cite{BeliakovaWehrli}) and the $s_{n}$ invariants (\cite{Jeong}).
Some of the results in the present paper concerning the slice-torus link invariants were proved separately for $\tau$, $s$ and the $s_{n}$'s. We will mention whenever a result was known for one or more of the above-mentioned invariants, or if it is completely new.

The first result, which is known for $\tau$, $s$ and $s_{n}$ ($n\geq3$), consists of a bound on the slice genus and an obstruction to strong sliceness. 
\begin{prop}\label{corollary:bound_on_the_slice_genus}
If $\nu$ is a slice-torus link invariant and $L$ is an $\ell$-component link, then
\[-g_4(L)\leq  \nu(L) \leq g_{4}(L) + \ell -1 . \]
Furthermore, if $L$ is a strongly slice link, then $\nu(L)=0$.
\end{prop}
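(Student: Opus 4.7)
The plan is to reduce the bound to the cobordism inequality that any slice-torus link invariant is expected to satisfy (part of the definition in Section \ref{sec:stli}). Fix a connected, oriented surface $\Sigma \subset \mathbb{S}^3 \times [0,1]$ of genus $g_4(L)$, properly embedded with $\partial \Sigma = L \subset \mathbb{S}^3 \times \{1\}$. Excise a small open disk from the interior of $\Sigma$ and attach a thin product tube running down to $\mathbb{S}^3 \times \{0\}$; the resulting surface $\Sigma'$ is then a connected cobordism of genus $g_4(L)$ from an unknot $U$ at the bottom to $L$ at the top, with $\ell + 1$ boundary components and Euler characteristic $1 - 2g_4(L) - \ell$.

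Applying the cobordism inequality to $\Sigma'$ in both directions should yield
\begin{align*}
\nu(L) - \nu(U) &\leq g_4(L) + \ell - 1,\\
\nu(U) - \nu(L) &\leq g_4(L),
\end{align*}
the asymmetry reflecting the different number of components at the two ends of $\Sigma'$. Since the restriction of $\nu$ to knots is a slice-torus invariant by Corollary \ref{corollary:restriction}, we have $\nu(U) = 0$, and the advertised double bound follows at once.

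For the second claim, if $L$ is strongly slice then $L$ is strongly concordant to the $\ell$-component unlink $U_\ell$, hence $\nu(L)=\nu(U_\ell)$ by strong concordance invariance. To conclude $\nu(U_\ell)=0$ I would invoke either a normalization axiom on unlinks built directly into the definition of slice-torus link invariants, or an additivity-under-split-union property that reduces the computation to $\nu(U_1)=0$. The main obstacle is therefore the cobordism inequality itself: its precise form (and the asymmetry in $\ell$) will come directly from the definition in Section \ref{sec:stli}, and once this is in hand the rest of the argument is routine bookkeeping.
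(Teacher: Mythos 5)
Your argument is exactly the paper's: excise a disk from a connected minimal-genus surface for $L$ to obtain a connected genus-$g_4(L)$ cobordism to the unknot, apply Proposition~\ref{proposition:slice_genus_bound} in both directions (with the asymmetry coming from the differing numbers of boundary components at the two ends), normalize with $\nu(U)=0$, and for the second claim use strong concordance invariance together with $\nu(\bigcirc_\ell)=0$ from Property (B) and $\nu(\bigcirc)=0$. The only small correction is that the cobordism inequality is not an axiom but is Proposition~\ref{proposition:slice_genus_bound}, already proved earlier in Section~\ref{sec:stli} from Properties (A) and (B).
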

Another known property of $s$, $\tau$ and $s_{n}$ is the detection of the $3$-dimensional and, under the hypothesis of non-splitness, the slice genus of positive links. It turns out that these results holds for any slice-torus link invariant.
\begin{teo}\label{teorema:slice_genus_positive}
Let $L$ be an $\ell$-component positive link, and let $D$ be a positive diagram representing $L$. Then
\[ \nu(L) = g_{3}(L) + \ell - \ell_s = \frac{n(D) - O(D)  + \ell}{2},\]
for each slice-torus link invariant $\nu$, where $n(D)$ is the number of crossings of $D$, $O(D)$ denotes the number of Seifert circles and $\ell_s$ is the number of split components of $L$.
Futhermore, if $L$ is also non-split then we have that
\[ \nu(L)= g_{4}(L) + \ell - 1.\]
\end{teo}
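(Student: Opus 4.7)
The plan is to split the stated formula into its combinatorial and its analytic parts, and to reduce the case of split $L$ to the non-split case. The equality $g_{3}(L)+\ell-\ell_{s}=\frac{n(D)-O(D)+\ell}{2}$ is a routine Euler-characteristic count: Seifert's algorithm on the positive diagram $D$ produces a surface $\Sigma_{D}$ consisting of $O(D)$ disks joined by $n(D)$ half-twisted bands, with $\ell_{s}$ connected components and $\ell$ boundary circles, so the total genus is $\frac{n(D)-O(D)-\ell+2\ell_{s}}{2}$. By the classical theorem of Cromwell and Nakamura, $\Sigma_{D}$ realizes $g_{3}(L)$ when $D$ is positive, and the identity follows.

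For the analytic equality $\nu(L)=g_{3}(L)+\ell-\ell_{s}$, I would first reduce to non-split $L$ using additivity of $\nu$ under split disjoint union, $\nu(L_{1}\sqcup L_{2})=\nu(L_{1})+\nu(L_{2})$, which should appear as a basic property of slice-torus link invariants in Section~\ref{sec:stli}. This reduction is essential because Proposition~\ref{corollary:bound_on_the_slice_genus} alone gives $\nu(L)\leq g_{4}(L)+\ell-1=g_{3}(L)+\ell-1$ (using $g_{4}=g_{3}$ for positive links), which is too weak by $\ell_{s}-1$ when $L$ is split; additivity repairs the discrepancy cleanly by summing the non-split formula over split components. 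For non-split $L$, the upper bound is then immediate from Proposition~\ref{corollary:bound_on_the_slice_genus} combined with the classical identity $g_{4}(L)=g_{3}(L)$ for non-split positive links (a consequence of the slice Bennequin inequality of Kronheimer--Mrowka and Rudolph): indeed $\nu(L)\leq g_{3}(L)+\ell-1$. This will also yield the final assertion $\nu(L)=g_{4}(L)+\ell-1$ for non-split positive $L$ once the main equality is in hand.

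The main obstacle is the matching lower bound $\nu(L)\geq g_{3}(L)+\ell-1$ for non-split positive $L$. My approach is to exploit the strong quasipositivity of $\Sigma_{D}$: by Rudolph's embedding theorem, $\Sigma_{D}$ sits as a subsurface of the Bennequin surface $\Sigma_{T(p,q)}$ of a positive torus link $T(p,q)$ for suitable $p,q$, and cutting along the complementary positive bands furnishes an explicit cobordism between $L$ and $T(p,q)$ of controlled Euler characteristic $\chi(\Sigma_{D})-\chi(\Sigma_{T(p,q)})$. Applying the cobordism inequality for slice-torus link invariants (another property that should be developed earlier in the paper alongside the crossing-change behaviour) to this cobordism, and using the value of $\nu$ on $T(p,q)$ prescribed by the axioms, then produces the required lower bound. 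The delicate point is to verify that the cobordism can be arranged to respect the component structure in precisely the way needed to match the $\ell-1$ correction.
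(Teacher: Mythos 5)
Your proposal is correct, but it takes a genuinely different route from the paper for the crucial lower bound $\nu(L)\geq \frac{n(D)-O(D)+\ell}{2}$. The paper's proof attaches $\ell-1$ oriented bands to the positive diagram $D$, each adding a positive crossing and fusing two components, producing a positive knot diagram $D_K$ with $n(D)+\ell-1$ crossings and $O(D)$ Seifert circles; the associated connected genus-$0$ cobordism from $K$ to $L$ together with Proposition~\ref{proposition:slice_genus_bound} gives $\nu(K)\leq\nu(L)$, and the lower bound then follows from the known value $\nu(K)=\frac{n(D_K)-O(D_K)+1}{2}$ on positive knots (Corollary~\ref{corollary:restriction} and Kawamura's theorem). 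Your route instead appeals to Rudolph's embedding theorem, realizing $\Sigma_D$ as a subsurface of a torus-knot fiber surface $G$ and applying the cobordism bound to $\Sigma=G\setminus\mathring{\Sigma}_D$. This works — indeed, a direct Euler-characteristic count shows that if $\Sigma$ is connected then $g(\Sigma)=g(G)-g(\Sigma_D)-\ell+1$, and Proposition~\ref{proposition:slice_genus_bound} with $\nu(T_{m,n})=g(G)$ gives exactly the desired bound — and it is in fact the strategy the paper deploys later for Theorem~\ref{teo:maximum}. The delicate point you flagged is real: one must argue (using the freedom to enlarge $G$ by additional positive Hopf plumbings) that $\Sigma$ can be made connected, whereas the paper's fusion cobordism is connected by inspection. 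Each approach buys something: yours is uniform with the later Whitehead-double argument, while the paper's is more elementary and cleanly reduces the link statement to the established knot statement. One small over-claim in your write-up: the upper bound does not need the classical identity $g_4(L)=g_3(L)$ for positive links; the chain $\nu(L)\leq g_4(L)+\ell-1\leq g_3(L)+\ell-1\leq g(\Sigma_D)+\ell-1$ already suffices, and the equalities $g_4=g_3=g(\Sigma_D)$ then fall out as corollaries once the lower bound forces the chain to collapse, rather than being input to it.
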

Computing the value of slice-torus link invariants for non-positive links can be difficult. However we provide a combinatorial bound, whose proof appears in Section \ref{sec:stli}, which allows us to compute the slice-torus link invariants for certain classes of links, namely quasi-positive links (Theorem \ref{teo:quasi-pos}) and negative links (Proposition \ref{proposition:slice_genus_negative}).
This bound was known for the $s$-invariant (e.g. \cite{Cavallo4,Kawamura}), but unknown for $\tau$ and the $s_{n}$'s. Moreover, the value of the $s_{n}$'s for quasi-positive and negative links, and the value of $\tau$ for negative links were unknown (the value of $\tau$ for quasi-positive links was computed in \cite{Cavallo2}).
\begin{teo}\label{teorema:bound_combinatorio}
Let $L$ be an $\ell$-component link, let $\ell_s$ be the number of split components of $L$ and let $\nu$ be a slice-torus link invariant. For each non-splittable diagram \footnote{That is the number of connected components of $D$ is the number of split components of $L$. Equivalently, we cannot obtain another diagram for $L$ with more connected components than $D$.} $D$ representing $L$ the following inequality holds
\begin{equation}\label{equation:bound_combiantorio}
\frac{w(D) -O(D) + 2 s_{+}(D) + \ell -2 \ell_{s}}{2} \leq \nu (L).
\end{equation}
\end{teo}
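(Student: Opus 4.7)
The plan is to follow the strategy that Kawamura used for the $s$-invariant: resolve every negative crossing of $D$ via an oriented saddle to obtain a positive diagram, compute $\nu$ of the resulting positive link using Theorem~\ref{teorema:slice_genus_positive}, and then transport this identity back to $\nu(L)$ through the cobordism produced by the resolutions.

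Concretely, I would first perform Seifert-type smoothing at each of the $n_-(D) = (n(D)-w(D))/2$ negative crossings of $D$. The resulting positive diagram $D^+$ has $n_+(D) = (n(D)+w(D))/2$ crossings and satisfies $O(D^+)=O(D)$, since the Seifert circles depend only on the orientation data. The invariant $s_+(D)$ is combinatorially tailored to this operation: it equals the number of split components of the link $L^+$ represented by $D^+$, equivalently the number of connected components of the subgraph of the Seifert graph of $D$ spanned by the positive crossings. By Theorem~\ref{teorema:slice_genus_positive},
\[\nu(L^+)=\frac{n_+(D)-O(D)+\ell(L^+)}{2}.\]
The stacked saddles assemble into a genus-zero cobordism $\Sigma\colon L\to L^+$ with $\chi(\Sigma)=-n_-(D)$. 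The remaining step would be a cobordism inequality of the schematic form
\[\nu(L^+)-\nu(L)\;\leq\;\frac{-\chi(\Sigma)+\ell(L^+)-\ell}{2}+s_+(D)-\ell_s;\]
substituting $\chi(\Sigma)=-n_-(D)$ and the value of $\nu(L^+)$ above and using $n_+(D)-n_-(D)=w(D)$ produces precisely the desired inequality~\eqref{equation:bound_combiantorio}.

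The main obstacle is establishing this cobordism inequality with the correct coefficients of $s_+(D)$ and $\ell_s$. The two inputs should be the slice-genus bound of Proposition~\ref{corollary:bound_on_the_slice_genus}, and the additivity of $\nu$ under split union (which one extracts from the slice-torus axioms by connected-summing with unknots). To get the sharp bound, the $n_-(D)$ oriented resolutions must then be ordered in a way compatible with the $s_+(D)$-fold split decomposition of $L^+$: first the resolutions whose two strands, at the end of the process, lie in the same split piece of $L^+$, and only afterwards the resolutions that glue together previously distinct split pieces. This separates the Euler-characteristic contribution from the split-structure contribution and is exactly what forces the two end-corrections $-2\ell_s$ and $+2s_+(D)$. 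Carrying out this bookkeeping, and arguing that each local move changes $\nu$ by the amount predicted by Proposition~\ref{corollary:bound_on_the_slice_genus} applied to an appropriate intermediate surface, is the combinatorial core of the argument.
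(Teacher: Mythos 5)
Your high-level plan — perform oriented resolutions at the negative crossings, evaluate $\nu$ on the resulting positive link via Theorem~\ref{teorema:slice_genus_positive}, and transport back through the resolution cobordism — is indeed Kawamura's strategy, which the paper also follows. But as you yourself flag, the crux is the transport step, and two things go wrong there. First, the inequality you wrote down has the wrong sign on the correction term: substituting $\chi(\Sigma)=-n_-(D)$ and $\nu(L^+)=\tfrac{n_+(D)-O(D)+\ell(L^+)}{2}$ into $\nu(L^+)-\nu(L)\leq\tfrac{-\chi(\Sigma)+\ell(L^+)-\ell}{2}+s_+(D)-\ell_s$ yields $\nu(L)\geq\tfrac{w(D)-O(D)+\ell-2(s_+(D)-\ell_s)}{2}$, which is weaker than the target. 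You actually need the correction $-(s_+(D)-\ell_s)$, and since $s_+(D)\geq\ell_s$ that version is strictly stronger than what Proposition~\ref{proposition:slice_genus_bound} provides: applied directly to your $\Sigma$, the component count $k$ of $\Sigma$ cancels out of $\nu(L^+)-g(\Sigma)-\ell(L^+)+k\leq\nu(L)$ and one obtains only $\nu(L)\geq\tfrac{w(D)-O(D)+\ell}{2}$, with no $s_+(D)$ at all. The ``reordering'' sketch does not close this gap — a fission saddle followed later by a fusion saddle is generically a genus-one handle in $\Sigma$, not a cancelling pair, so reordering the $n_-(D)$ saddles by itself neither lowers the genus nor produces the extra $s_+(D)-\ell_s$.

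The paper's route diverges in a decisive way: it does not refine the cobordism inequality, it replaces the comparison positive link. Lemma~\ref{lemma:5.5} (Kawamura) does not stop at the naive oriented resolution; after resolving the negative crossings it reconnects the $s_+(D)$ resulting split pieces along a spanning tree of $\overline{\Gamma}$ by adding $s_+(D)-1$ new positive crossings via fusion bands. The outcome is a \emph{connected} positive diagram $D_+$ with $n(D_+)=n_+(D)+s_+(D)-1$ and the same $O(D)$, so $\nu(L_+)$ is larger and $\ell_+$ is smaller than for your $L^+$; feeding these values, together with $\chi(\Sigma_+)$, into Proposition~\ref{proposition:slice_genus_bound} applied \emph{unchanged} (plus a second-Reidemeister move trick to ensure $\Sigma_+$ may be taken connected — a point your sketch omits) delivers precisely the $+2s_+(D)-2\ell_s$. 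In short, the $s_+(D)$-dependence enters through the choice of comparison link and Theorem~\ref{teorema:slice_genus_positive} evaluated on it, not through any improvement of the slice-genus-type inequality, and that is the idea missing from your argument.
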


The first truly novel application of the slice-torus link invariants is the computation of the splitting number. 
Before proceeding further, we wish to recall the reader the definition of the two main versions of the splitting number. Following \cite{Adams}, the \emph{splitting number} $\widetilde{\spl}$ is the minimal number of crossing changes (among all diagrams) necessary to transform an $\ell$-component link $L$ into the disjoint union of $\ell$ knots. A second version of the splitting number, which was studied for example in \cite{Friedletal,Cimasonietal}, has a similar definition but the only crossing changes allowed are those between different components. We denote this second version by $\spl$, and call it the \emph{strong splitting number}. Clearly, we have the inequality $\widetilde{\spl} \leq \spl$ but the equality does not hold in general. We prove that the slice-torus link invariants can be used to produce a lower bound for the splitting number $\widetilde{\spl}$.
\begin{teo} 
\label{theorem:splitting}
Suppose that $\nu$ is a slice-torus link invariant and $L$ is a link with components $K_1,...,K_{\ell}$. Then we have 
\begin{equation}
\label{equation:bound_on_splitting}
\left|\nu(L)-\sum_{i=1}^{\ell}\nu(K_i)\right|\leq\widetilde{\spl}(L)\:.
\end{equation}
\end{teo}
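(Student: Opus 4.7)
The plan is to interpolate between $L$ and a split link by a minimal sequence of crossing changes and to track, along the way, the auxiliary quantity
\[ \Delta(\Lambda) := \nu(\Lambda) - \sum_{i=1}^{\ell(\Lambda)} \nu(J_i), \]
where $J_1,\dots,J_{\ell(\Lambda)}$ are the components of the link $\Lambda$. The sought bound is exactly $|\Delta(L)|\leq\widetilde{\spl}(L)$. Fix a sequence of $k=\widetilde{\spl}(L)$ crossing changes $L=L_0\to L_1\to\cdots\to L_k$ with $L_k=K_1'\sqcup\cdots\sqcup K_\ell'$ split. Additivity of $\nu$ under disjoint union, a property expected to be part of the definition of slice-torus link invariant (in any case implied by Proposition \ref{corollary:bound_on_the_slice_genus} together with the behaviour under connected sum), forces $\Delta(L_k)=0$, so by telescoping it suffices to show that $|\Delta(L_{j+1})-\Delta(L_j)|\leq 1$ for every $j$.

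To verify this one-step estimate, I would distinguish two cases depending on the type of crossing changed between $L_j$ and $L_{j+1}$. If the crossing lies between two different components, the individual components are untouched, so $\Delta(L_{j+1})-\Delta(L_j)=\nu(L_{j+1})-\nu(L_j)$, which has absolute value at most $1$ by the crossing-change inequality for $\nu$. If instead the crossing is internal to the $i$-th component, the same local move turns $K_i^{(j)}$ into $K_i^{(j+1)}$ while leaving all other components unchanged, so
\[ \Delta(L_{j+1})-\Delta(L_j) = \bigl[\nu(L_{j+1})-\nu(L_j)\bigr] - \bigl[\nu(K_i^{(j+1)})-\nu(K_i^{(j)})\bigr]. \]
Since the crossing has a definite sign, both bracketed differences are of the \emph{same} type (positive-to-negative or negative-to-positive); the sign-refined form of the crossing-change inequality then places them in a common interval of width $1$ (either $\{0,1\}$ or $\{-1,0\}$), forcing their difference to lie in $\{-1,0,1\}$.

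The main obstacle is precisely this intra-component case: the symmetric bound $|\nu(\Lambda^+)-\nu(\Lambda^-)|\leq 1$ would only yield $|\Delta(L_{j+1})-\Delta(L_j)|\leq 2$, which is too weak. What really makes the argument close is the asymmetric, sign-sensitive version of the crossing-change inequality, namely that switching a positive crossing to a negative one can change $\nu$ only by $0$ or $-1$. This refined behaviour is exactly what the abstract refers to as ``the behaviour under crossing change'' for slice-torus link invariants, so once it is established the theorem follows by summing the $k$ one-step estimates.
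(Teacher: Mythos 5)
Your proposal is correct and follows essentially the same route as the paper: a one-step estimate on a minimal splitting sequence, split into the inter-component and intra-component cases, with the intra-component case handled exactly by the asymmetric crossing-change inequality (Proposition \ref{proposition:crossing_change}). The only cosmetic differences are that the paper phrases the telescoping as an induction on $\widetilde{\spl}(L)$, and that since $\nu$ is $\mathbb{R}$-valued the relevant ranges are the intervals $[0,1]$, $[-1,0]$, $[-1,1]$ rather than the discrete sets you wrote; also note that additivity under disjoint union is literally Property (B) of the definition, so no appeal to Proposition \ref{corollary:bound_on_the_slice_genus} is needed for the base case.
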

Furthermore, we provide an infinite family of examples where our bound on $\widetilde{\spl}$ is sharp, and $\widetilde{\spl} \ne \spl$ (Proposition \ref{proposition:splitting}). To the best of our knowledge, there is no other known method to compute the value of the splitting number for this family of links. A weaker version of Theorem \ref{theorem:splitting} featuring the strong splitting number was proved, for the $s_{n}$-invariants, in \cite{Jeong}.

\begin{remark}
Few months after the present paper was posted on the arxiv, the authors were informed that another bound on the splitting number $\widetilde{\spl}$ was previously discovered by A. Conway in his Ph.D.~thesis \cite[Proposition~4.4.5]{ConwayThesis}. Conway's bound is not published and does not appear in the arxiv, and uses completely different techniques from the ones used in the present paper. The main ingredients for Conway's bound are the multivariate signature and nullity. It can be checked that also Conway's bound can be used to compute the splitting number for the family $L_{t}$ in Proposition~\ref{proposition:splitting}. Nonetheless, we expect the two bounds to be independent. Since the comparison between the two bounds falls outside the scopes of the present paper, we leave the discussion of this topic to a forthcoming paper.
\end{remark}

The final part of our paper is dedicated to the definition of new strong concordance invariants via Whitehead doubling. The notion of Whitehead double for links is not unique. We will be interested in two different kinds of Whitehead doubles. The first kind is the fully clasped Whitehead double $W_\pm(L,\underline{m})$ which is basically obtained by doubling all the components, where $\underline{m}\in \mathbb{Z}^\ell$ encodes the number of twists in the double of each component.
The second type of Whitehead double we are interested in is the reduced Whitehead double $W^\prime_\pm(L,m;L_{1})$, which is obtained by doubling only the component $L_1$ inserting $m \in \mathbb{Z}$ full twists. Notice that in the case $L$ is a knot, the two constructions yield the same result: the $m$-twisted Whitehead double of $L$.

We use the fact that if two links are strongly concordant then their Whitehead doubles are also strongly concordant (Theorem \ref{teo:concordant}) to define four functions which are strong concordance invariants. More precisely, we consider the following functions
\[ F_{\nu}(L)(\underline{t})= \nu(W_{+}(L,\underline{t}))\quad\text{and}\quad F^\prime_{\nu}(L;L_{1})(t)= \nu(W^\prime_{+}(L,t;L_{1}))\]
and
\[ \overline{F}_{\nu}(L)(\underline{t})= \nu(W_{-}(L,\underline{t}))\quad\text{and}\quad \overline{F}^\prime_{\nu}(L;L_{1})(t)= \nu(W^\prime_{-}(L,t;L_{1})),\]
where $\nu$ is a slice-torus link invariant.
These functions generalize the functions $F_{\nu}(K)(t)$ and $\overline{F}_{\nu}(K)(t)$, and thus the invariant $t_{\nu}$ defined by Livingston and Naik (\cite{LivingstonNaik}).
\begin{teo}
Suppose that $\nu$ is a slice-torus link invariant, $L$ is a link and $L_1$ a component of $L$. The functions $F_{\nu}(L) $, $  F^{\prime}_{\nu}(L;L_{1}) $, $ \overline F_{\nu}(L)$, and $\overline F^{\prime}_{\nu}(L;L_{1})$ are non-increasing and bounded. Furthermore, $F_{\nu}(L)$ and $\overline F_{\nu}(L)$ are non-constant and assume the maximal possible value.
\end{teo}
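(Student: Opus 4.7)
The plan is to treat the four claims (non-increasing, bounded, non-constant, maximum attained) in sequence, using the crossing-change inequality, the slice-genus bound of Proposition \ref{corollary:bound_on_the_slice_genus}, and the positive-diagram formula of Theorem \ref{teorema:slice_genus_positive}. The arguments for $F_{\nu}^{\prime}$ and $\overline{F}_{\nu}^{\prime}$ will parallel those for $F_{\nu}$ and $\overline{F}_{\nu}$, and the arguments for $\overline{F}_{\nu}$ will follow those for $F_{\nu}$ with the signs of the clasp and twists reversed.

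For the non-increasing property, I would compare $W_{+}(L,\underline{t})$ and $W_{+}(L,\underline{t}+e_{i})$, where $e_{i}$ denotes the $i$-th standard basis vector of $\Z^{\ell}$. The two standard diagrams differ by exactly one crossing change inside the twist box of the Whitehead pattern attached to the $i$-th component, and the sign of the switched crossing is determined by the parallel orientation of the two strands of the double. The crossing-change behaviour of slice-torus link invariants (announced in the abstract, and presumably established in Section \ref{sec:stli}) then gives $\nu(W_{+}(L,\underline{t}+e_{i})) \leq \nu(W_{+}(L,\underline{t}))$, which is the non-increasing property in the $i$-th coordinate. The analogous comparisons handle $\overline{F}_{\nu}$, $F_{\nu}^{\prime}$, and $\overline{F}_{\nu}^{\prime}$.

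For boundedness, I would use that each component of $W_{+}(L,\underline{t})$ bounds a genus $1$ Seifert surface in $\mathbb{S}^{3}$ (the standard disk-with-a-band model for the Whitehead pattern). Pushing the $\ell$ surfaces disjointly into $\mathbb{D}^{4}$ and then tubing them into a connected surface yields a bound $g_{4}(W_{+}(L,\underline{t})) \leq 2\ell - 1$, which is independent of $\underline{t}$. Proposition \ref{corollary:bound_on_the_slice_genus} then gives a uniform bound of the form $-(2\ell-1) \leq F_{\nu}(L)(\underline{t}) \leq 3\ell - 2$, and the same is true for the other three functions (the reduced case is in fact slightly better because only one component is doubled).

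For the existence of the maximum and the non-constancy of $F_{\nu}$ (and, symmetrically, of $\overline{F}_{\nu}$), the key step is to show that the function is eventually constant as $\underline{t}\to -\infty$ componentwise. I would argue that for every coordinate $t_{i}$ sufficiently negative, the standard diagram of $W_{+}(L,\underline{t})$ is a positive diagram once the orientation of the double is chosen so that the negative full twists in each companion region present only positive crossings; the clasps and the doubled crossings of the companion can all be made positive too after reorienting one strand of each double. Under this hypothesis, Theorem \ref{teorema:slice_genus_positive} computes $\nu(W_{+}(L,\underline{t}))$ explicitly as $(n(D)-O(D)+\ell)/2$. A direct count shows that adding further negative twists changes $n(D)$ and $O(D)$ by the same amount, so the value stabilises to a constant $M$ which is then the maximum. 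Non-constancy then follows by comparing with any $\underline{t}^{*}$ with $F_{\nu}(L)(\underline{t}^{*})<M$, for instance one for which the slice-genus bound forces $F_{\nu}(L)(\underline{t}^{*})\leq M-1$ (this can be arranged by further monotonicity together with the bound in the opposite extreme). The main obstacle I foresee is precisely the combinatorial verification that, for $\underline{t}$ sufficiently negative, the standard diagram of $W_{+}(L,\underline{t})$ is positive after a coherent reorientation of the doubles, and the explicit identification of the stabilised value $M$; once this diagrammatic input is in place, the rest of the theorem reduces to the crossing-change and slice-genus ingredients already discussed.
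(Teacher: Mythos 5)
Your treatment of the non-increasing property and of boundedness is sound in spirit (the monotonicity does come from a second Reidemeister move followed by a crossing change, as in the paper's Theorem \ref{teo:decreasing}; and for boundedness, while the paper gets the sharper bounds $F_{\nu}(L)\in[0,\ell]$, $\overline F_{\nu}(L)\in[-\ell,0]$ etc.\ via a single band move to $(L\setminus L_0)\sqcup H_{\pm}$, your genus-of-the-double argument would yield a valid if coarser bound). The gap is in the final two assertions, and it is a genuine one.

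For the existence of the maximum, your strategy is to make the standard doubled diagram of $W_{+}(L,\underline t)$ into a positive diagram for $\underline t$ sufficiently negative, and then invoke Theorem \ref{teorema:slice_genus_positive}. This cannot work: the two parallel strands of the Whitehead pattern are \emph{antiparallel} (it is a knot winding twice around the companion in opposite directions), and you cannot reorient ``one strand of the double'' without breaking the pattern. Consequently, each crossing of the companion diagram $D_i$ gives rise to four crossings of the doubled diagram, two positive and two negative, regardless of $\underline t$. So unless $L$ is an unlink with a crossingless diagram, the standard doubled diagram is never positive, and no amount of twisting in the boxes fixes this. You correctly flag this as ``the main obstacle,'' but it is in fact fatal rather than merely technical. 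The paper's Theorem \ref{teo:maximum} circumvents this with an entirely different idea: take a Legendrian representative $\mathcal L$ of $L$ and set $t_i=\mathrm{tb}(\mathcal L_i)$; by Rudolph's work the double at this framing bounds a quasi-positive surface, which after positive Hopf plumbings embeds in a minimal Seifert surface $G$ of a torus knot; the complementary cobordism $\Sigma=G\setminus\mathring F$ then gives, via Proposition \ref{proposition:slice_genus_bound} and the value $\nu(T_{m,n})=g(G)$, the inequality $\ell\leq\nu(W_+(L,\underline t))$, hence equality. Quasi-positivity here is the right substitute for your positivity assumption, precisely because positivity is unattainable.

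For non-constancy, your argument is not carried out: you propose to ``compare with any $\underline t^*$ for which the slice-genus bound forces $F_{\nu}(L)(\underline t^*)\leq M-1$,'' but you do not exhibit such a $\underline t^*$, and the one-sided slice-genus bound alone does not produce one. The paper's Proposition \ref{prop:step} uses a duality that your plan does not touch: if $F_{\nu}(L)$ were constant equal to $\ell$, then by Property (C) the mirror identity $\nu(W_+(L,\underline t)^*)=\nu(W_-(L^*,-\underline t))\leq -1$ would hold for all $\underline t$, contradicting $\overline F_{\nu}(L^*)(\underline t)=0$ at the Thurston--Bennequin framing (Corollary \ref{cor:maximum}). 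This interplay between $F_{\nu}$ and $\overline F_{\nu}$ via mirroring is the missing ingredient; without the Legendrian result it is not available, so this part collapses along with the previous one.
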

As an application we show that these functions can be used to obstruct the existence of a strong concordance to a split link (Theorem \ref{teo:split}). We conclude the paper with some sample computations, proving the following result, which is still unknown in the case of knots.
\begin{teo}\label{teo:Fprimeandnu}
There exists a 2-component link $L$ and a slice-torus link invariant $\nu$ such that the function $F^\prime_{\nu}$ does not depend only on the linking matrix of $L$ and on  $\nu(L)$.
\end{teo}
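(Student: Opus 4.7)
The plan is to produce two $2$-component links $L$ and $L'$ which share the same linking matrix and satisfy $\nu(L) = \nu(L')$ for some slice-torus link invariant $\nu$, together with a twist parameter $t_0 \in \mathbb{Z}$ for which $F^\prime_\nu(L;L_1)(t_0) \neq F^\prime_\nu(L';L_1')(t_0)$. Any such triple directly shows that $F'_\nu$ carries strictly more information than the pair $(\text{linking matrix}, \nu(L))$, which is the content of the theorem.

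A natural candidate construction is to start from a simple positive link $L$ (e.g.\ the positive Hopf link, whose linking matrix and $\nu$-value are immediate from Theorem \ref{teorema:slice_genus_positive}, giving $\nu(L)=1$), and to build $L'$ by modifying the distinguished component $L_1$ in a way that preserves both invariants. One concrete mechanism is to form the connected sum of $L_1$ with an auxiliary knot $K$ satisfying $\nu(K)=0$; then $L'$ has the same linking numbers as $L$, and by additivity of $\nu$ on knot summands (a consequence of the group-homomorphism property recorded in Corollary \ref{corollary:restriction}) we obtain $\nu(L') = \nu(L) + \nu(K) = \nu(L)$. The reduced Whitehead double $W^\prime_+(L',t;L_1')$ then differs from $W^\prime_+(L,t;L_1)$ by replacing the Whitehead pattern on the unknot with the Whitehead pattern on $K$ inside the same ambient linking configuration.

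The remaining step is to show that some twist value $t_0$ witnesses the discrepancy. We bound $\nu$ of each reduced Whitehead double by combining the combinatorial inequality of Theorem \ref{teorema:bound_combinatorio} from below, applied to an explicit diagrammatic Whitehead double, with the slice genus inequality of Proposition \ref{corollary:bound_on_the_slice_genus} from above, using genus-efficient cobordisms that implement the twisting. Near the threshold framing where $F^\prime_\nu$ transitions, the two bounds are expected to be sharp and to separate the two links, provided $K$ and $\nu$ are chosen so that the classical Livingston-Naik-type invariant $t_\nu$ (or its refined link analog) detects $K$ beyond $\nu(K)$.

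The main obstacle is the explicit identification of a triple $(K, \nu, t_0)$ for which these bounds genuinely separate: by construction this amounts to an instance in which $t_\nu$ sees more than $\nu$ on the pattern $K$, a phenomenon unknown in the pure knot case. In the link setting, however, the doubling of $L_1$ intrinsically interacts with the linking to $L_2$, so subtle variations in the embedding of $L_1$ inside $L$ can be amplified into a $\nu$-detectable change of the doubled link; this extra flexibility — absent in the knot situation — is what we expect to exploit, with candidate pairs $(K,\nu)$ drawn from the $s_n$ family, for which the relationship between $t_\nu$ and $\nu$ is least constrained. Once a suitable explicit example is pinned down, the inequality $F^\prime_\nu(L;L_1)(t_0) \neq F^\prime_\nu(L';L_1')(t_0)$ follows from the above bracketing argument and completes the proof.
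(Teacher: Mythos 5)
The proposal identifies the right logical shape of the statement---exhibit two $2$-component links with identical linking matrix and identical $\nu$-value, but different $F^\prime_{\nu}$---and this is also the structure of the paper's proof. However, the proposal stops short of being a proof: no concrete example is produced. Instead, the argument reduces to the assertion that for a suitable choice of knot $K$ with $\nu(K)=0$, some slice-torus link invariant $\nu$, and some twist $t_0$, the reduced Whitehead doubles of $L = H_+$ and $L' = (L_1 \# K)\cup L_2$ will have different $\nu$-values. You explicitly acknowledge that, as stated, this is equivalent to exhibiting an instance where $t_\nu$ detects $K$ beyond $\nu(K)$---a question that, in the knot case, the paper itself records as open (see the discussion after Definition 1.1 of $t_\nu$ in the introduction). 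The subsequent appeal to ``extra flexibility'' from the ambient linking is a heuristic expectation, not an argument, and the claim that the combinatorial lower bound of Theorem \ref{teorema:bound_combinatorio} combined with the slice-genus upper bound of Proposition \ref{corollary:bound_on_the_slice_genus} will be sharp ``near the threshold framing'' is asserted rather than verified. As a secondary point, the claimed equality $\nu(L') = \nu(L) + \nu(K)$ is not actually a consequence of Corollary \ref{corollary:restriction}: that corollary concerns the restriction of $\nu$ to knots, while $L\# K$ is a link. The equality can be salvaged from Remark \ref{remark:sub-additivity} applied in both directions (using that $K\#(-K^*)$ is slice), but the justification as written is off.

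The paper's actual proof is different and more concrete. It does not modify the Hopf link at all; rather, it exhibits a second pre-existing $2$-component link, $L = \mathrm{L8a9}$, which is non-split alternating with the same signature and linking matrix as $H_+$, hence the same Lee homology, filtered link Floer homology, $s$- and $\tau$-invariants. One then computes (by machine) the Khovanov homology of $W^\prime_+(\mathrm{L8a9},1;A)$ and $W^\prime_+(H_\pm,1)$ and reads off, via the criterion in Proposition \ref{teo:s}, that $F^\prime_{\nu_s}(\mathrm{L8a9};A)(1)=1$ while $F^\prime_{\nu_s}(H_\pm)(1)=0$. This sidesteps both the open-problem bottleneck you run into and the need for the sandwich argument to be sharp. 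Your proposal, by contrast, reduces the theorem to a statement that, at least in the form you have framed it, is at least as hard as (and possibly equivalent to) the open question about $t_\nu$ for knots. To turn your strategy into a proof one would either have to resolve that open question, or---more in the spirit of the paper---replace the hoped-for bracketing argument with an explicit computation (Khovanov or knot Floer homology) of $\nu$ on the candidate Whitehead doubles and exhibit the inequality directly.
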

\subsection*{Acknowledgements: }

A.C would like to thank Irena Matkovi\v{c} for her help during the writing of the paper.
A.C is supported by a Young Research Fellowship from the Alfr\'ed R\'enyi Institute of Mathematics.

C.C. wishes to thank Andr\'as Stipsicz and the Alfr\'ed R\'enyi Institute for the hospitality.
During the writing of the present paper C.C. was partially supported by an Indam scholarship for a research period outside Italy.
\section{Slice-torus link invariants}
\label{sec:stli}

In this section we introduce the slice-torus link invariants, and prove their first properties. We start by proving that slice-torus link invariants have a controlled behaviour with respect to the crossing change. This will be fundamental in the last part of the paper. Afterward, we prove the bound on the slice genus, which follows from a more general bound on the difference of the slice-torus link invariants of cobordant links. Finally, we compute the value of the slice-torus link invariants of the positive links, and we use it to produce the combinatorial bound. From the combinatorial bound will follow the computation of the value of slice-torus link invariants of the negative links.

\subsection{Definition and first properties}

Let us start with the definition of slice-torus link invariants.

\begin{defin}\label{defn:slice_torus_link_invariant}
A \emph{slice-torus link invariant} is an $\mathbb{R}$-valued strong concordance link invariant $\nu$ satisfying the following properties:
\begin{itemize}
\item[(A)] if $L_{1}$ and $L_2$ are related by an oriented band move, and $L_1$ has one component less than $L_{2}$ (cf. Figure \ref{figure:band_move}), then
\[ \nu (L_2) - 1 \leq \nu (L_1) \leq \nu (L_2); \]
\begin{figure}[h]
\centering
\begin{tikzpicture}[scale = .2]
\draw (0,4) circle (1 and 2);
\draw (0,-4) circle (1 and 2);

\draw[dashed] (8,2) arc (90:270:1 and 2);
\draw (8,-2) arc (-90:90:1 and 2);

\draw (0,-2) arc (-90:90:1 and 2);

\draw (0,6) .. controls  +(1,0) and +(-2,0) .. (8,2);
\draw (0,-6) .. controls  +(1,0) and +(-2,0) .. (8,-2);

\node at (4,-14) {$\vdots$};

\node at (12,-8) {$L_1$};
\node at (-4,-8) {$L_2$};

\draw (0,-10) circle (1 and 2);
\draw (0,-8) -- (8,-8);
\draw (0,-12) -- (8,-12);
\draw[dashed] (8,-8) arc (90:270:1 and 2);
\draw (8,-12) arc (-90:90:1 and 2);

\begin{scope}[shift = {+(0,-8)}]
\draw (0,-10) circle (1 and 2);
\draw (0,-8) -- (8,-8);
\draw (0,-12) -- (8,-12);
\draw[dashed] (8,-8) arc (90:270:1 and 2);
\draw (8,-12) arc (-90:90:1 and 2);
\end{scope}

\draw[dashed] (25,-4) circle (3);
\draw[dashed] (25,-16) circle (3);

\draw[->,>=stealth] (25,-8) -- (25, -12);

\draw[very thick, ->,>=stealth] (25+ 3*0.707,-4 + 3*0.707) .. controls  +(-1,-1) and +(-1,1) ..(25+ 3*0.707,-4-3*0.707) ;
\draw[very thick, <-,>=stealth] (25- 3*0.707,-4 + 3*0.707) .. controls  +(1,-1) and +(1,1) ..(25- 3*0.707,-4-3*0.707) ;

\draw[very thick, ->,>=stealth] (25+ 3*0.707,-16 + 3*0.707) .. controls  +(-1,-1) and +(1,-1) ..(25- 3*0.707,-16+3*0.707) ;
\draw[very thick, ->,>=stealth] (25- 3*0.707,-16 - 3*0.707) .. controls  +(1,1) and +(-1,1) ..(25+ 3*0.707,-16-3*0.707) ;
\end{tikzpicture}
\caption{A schematic description of a band move between the links $L_1$ and $L_2$ (left), and the local description of an oriented band move (right).}\label{figure:band_move}
\end{figure}
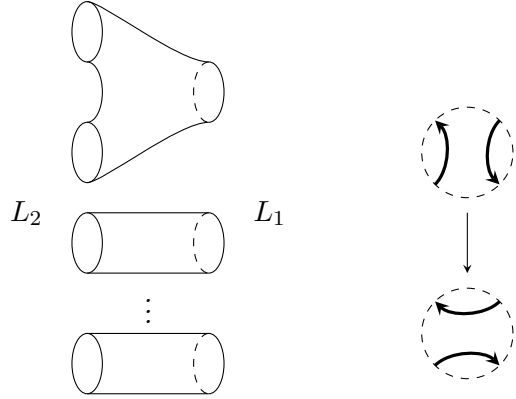
\item[(B)] $\nu$ is additive under disjoint union, that is $\nu (L_{1} \sqcup L_2 ) = \nu (L_1) + \nu (L_2)$;
\item[(C)] for each $\ell$-component link $L$ we have $$0\leq\nu (L)+\nu(-L^*)\leq \ell-1\:,$$ where $L^*$ denotes the mirror image of $L$, and $-L$ denotes $L$ with the orientation reversed;
\item[(D)] if $T_{p,q}$ is the positive $(p,q)$-torus knot, then
\[\nu(T_{p,q}) = \frac{(p-1)(q-1)}{2}\]
\end{itemize}
\end{defin}
Property (C) in the previous definition can be relaxed in the following sense: we may require the inequality in the property to hold only in the case of knots. Thus, we obtain that for each knot $K$ we have
\[\nu(-K^*)=-\nu(K).\] 
With this different definition in place, we can prove the following property:
\begin{itemize}
\item[($C^\prime$)] for every $\ell$-component link $L$, we have 
\[0\leq\nu(L)+\nu(-L^*)\leq\ell,\]
and $\nu(-K^*)=-\nu(K)$ for each knot $K$.
\end{itemize}
This is done by observing that we can obtain a strongly slice link if we perform $\ell$ band moves on the link $L\:\sqcup-L^*$. In turn, this can be seen by putting a diagram of $L$, and its mirror image with reverse orientation, in a symmetric position with respect to a line. Then, add an unknotted band between each pair of corresponding components. The result of this operation is a link bounding a ribbon surface which is the union of ribbon disks, and thus is a strongly slice link. Now ($C^\prime$) follows from Property (B). All the results in the paper, with the exception of Proposition \ref{prop:step} and some computations in Section \ref{section:example}, remain true replacing  Property (C) with Property ($C^\prime$) in the definition of slice-torus link invariant.

Our choice of Property (C) is motivated by the fact that, once suitably normalised, $s$ (\cite[Lemma 6.1]{BeliakovaWehrli}), the $s_{n}$'s (\cite[Theorem 3]{Jeong}) and $\tau$ satisfy it (see the examples below for the normalizations). For the latter invariant Property (C) follows from the additivity with respect to the connected sum (\cite[Subsection 3.3]{Cavallo}), and from the following lemma.
\begin{lemma}
 Suppose that $\nu$ is a strong concordance invariant that satisfies Properties (A), (B) and (D) and is additive under connected sums of links. Then Property (C) also holds and $\nu$ is a slice-torus link invariant.
\end{lemma}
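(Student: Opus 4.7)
The plan is to verify Property (C), since (A), (B) and (D) are already part of the hypotheses. The strategy refines the argument for Property ($C'$) sketched just before the lemma: replace one of the $\ell$ symmetric band moves appearing there with a connected-sum operation which, thanks to the assumed additivity, does not alter $\nu$ at all, hence buying one extra unit in the final estimate and turning the bound $\ell$ into $\ell-1$.

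First I would observe that $\nu$ vanishes on the unknot: from $U\#U=U$ and additivity under connected sum one gets $\nu(U)=2\nu(U)$, whence $\nu(U)=0$. Combined with (B), this gives $\nu(U_k)=0$ for every $k$-component unlink $U_k$, and since $\nu$ is a strong concordance invariant one concludes that $\nu(S)=0$ for every strongly slice link $S$.

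Next, let $L$ have components $L_1,\dots,L_\ell$, and place $L\sqcup(-L^*)$ in the mirror-symmetric position used in the discussion of ($C'$), so that for each $i$ there is an unknotted arc joining $L_i$ with its mirror copy $-L_i^*$. Using the first such arc to form a connected sum produces $L\#(-L^*)$, which has $2\ell-1$ components and satisfies
\[ \nu(L\#(-L^*)) \;=\; \nu(L)+\nu(-L^*) \;=\; \nu(L\sqcup(-L^*)) \]
by additivity under connected sum together with (B). The remaining $\ell-1$ symmetric arcs can be realised as oriented band moves, each dropping the number of components by one, and at the end of this process one reaches the same ribbon-disk link already appearing in the discussion of ($C'$), which is strongly slice and therefore has $\nu=0$ by the preliminary step. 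Iterating Property (A) once per band move then yields
\[ \nu(L)+\nu(-L^*)-(\ell-1) \;\leq\; 0 \;\leq\; \nu(L)+\nu(-L^*), \]
which is exactly Property (C); together with the other three axioms, this shows that $\nu$ is a slice-torus link invariant.

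The only genuinely non-routine point in the plan is geometric rather than algebraic: one has to check that replacing the first of the $\ell$ symmetric bands with a connected-sum arc is compatible with the ribbon-disk construction producing the strongly slice link, i.e.\ that the remaining $\ell-1$ bands still cap off to a disjoint union of ribbon disks after the change. This is essentially the same symmetric doubling argument already invoked for ($C'$), so once it is verified the rest is bookkeeping with (A), (B), (D) and connected-sum additivity.
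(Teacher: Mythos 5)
Your proof is correct and follows essentially the same route as the paper: both identify the first of the $\ell$ symmetric band moves as realizing the connected sum $L\#(-L^*)$, invoke connected-sum additivity to get $\nu(L\#(-L^*))=\nu(L)+\nu(-L^*)$, and then apply Property (A) across the remaining $\ell-1$ band moves to a strongly slice link on which $\nu$ vanishes. You merely spell out in more detail the observation that $\nu$ vanishes on strongly slice links, which the paper states as a one-line consequence of (B) and additivity.
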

\begin{proof}
 As we remarked before, we can apply $\ell$ bands move on $L\:\sqcup-L^*$, each one between a component of $L$ and its 
 corresponding mirror image, in the way that the result is strongly slice. We observe that the first of these moves changes 
 $L\:\sqcup-L^*$ into $L'=L\#-L^*$; then we have that $\nu(L')=\nu(L)+\nu(-L^*)$ by assumption.
 
 At this point, applying Property (A) $\ell-1$ times yields to \[0\leq\nu(L')=\nu(L)+\nu(-L^*)\leq\ell-1\:,\]
 where we used that $\nu(J)=0$ if $J$ is strongly slice. This last claim follows from the additivity of connected sums and Property (B).   
\end{proof}

\begin{ex}
\label{ex:first}
The quantity $\nu_s = \frac{s+\ell-1}{2}$ is a slice-torus link invariant, where $s$ is the extension of Rasmussen invariant (cf. \cite{Rasmussen}) to links introduced in \cite{BeliakovaWehrli} and $\ell$ is the number of components of the link.
\end{ex}
\begin{ex}
The Ozsv\'ath-Szab\'o $\tau$-invariant (cf. \cite{O-Sz}), which was extended to links in \cite{Cavallo}, is a slice-torus link invariant.
\end{ex}
\begin{ex}
More generally, we can consider the $\mathfrak{sl}_n$ version of the Rasmussen invariant, denoted with $s_n$, introduced by Lobb and Wu independently in \cite{Lobb,Wu}, which were extended to links in \cite{Jeong}. Then we have that $$\nu_{s_n} = \frac{-s_n(L)+(\ell-1)(n-1)}{2(n-1)}$$ is a slice-torus link invariant. In particular, if $n=2$ then $s_n(L)=-s(L)$ and we recover the expression in Example \ref{ex:first}.
\end{ex}

\begin{remark}
The unknot can be seen as $T_{1,p}$. In particular, it follows from Property (D) that for each slice-torus link invariant $\nu (\bigcirc) = 0$.
\end{remark}

The value of a slice-torus link invariant on the Hopf link, and the negative trefoil knot, is constant (i.e. does not depend on the slice-torus link invariant). Since these values shall be used in the follow up, we record them into the following lemma.

\begin{lemma}\label{lemma:values_of_nu_1}
Let $\nu$ be a slice-torus link invariant, and denote by $H_{\pm}$ the positive (resp. negative) Hopf link. Then, we have that $\nu(T_{2,3}^{*}) = -1$, $\nu(H_{+}) = 1$ and $\nu(H_{-})= 0$.
\end{lemma}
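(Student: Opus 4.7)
The plan is to handle the three values in sequence, using Properties (D), (C), and (A) respectively, with braid presentations providing the band moves needed in Property (A).

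First I would compute $\nu(T_{2,3}^*)=-1$. Property (D) applied to $(p,q)=(2,3)$ gives $\nu(T_{2,3})=1$. The positive trefoil is invertible, i.e. $-T_{2,3}=T_{2,3}$, so applying mirror reflection yields $-T_{2,3}^*=T_{2,3}^*$. Since $T_{2,3}$ is a knot, Property (C) with $\ell=1$ forces $\nu(T_{2,3})+\nu(-T_{2,3}^*)=0$, hence $\nu(T_{2,3}^*)=\nu(-T_{2,3}^*)=-\nu(T_{2,3})=-1$.

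Next I would compute $\nu(H_+)=1$ by sandwiching it via two oriented band moves derived from braid presentations. Represent $T_{2,3}$ as the closure of $\sigma_1^3\in B_2$, $H_+$ as the closure of $\sigma_1^2$, and the unknot as the closure of $\sigma_1$. Oriented resolution of a positive crossing is exactly an oriented band move (Seifert smoothing). Resolving one crossing of $\sigma_1^3$ produces $\sigma_1^2$: this is a band move with $L_1=T_{2,3}$ (one component) and $L_2=H_+$ (two components), so Property (A) gives $\nu(H_+)-1\leq \nu(T_{2,3})\leq \nu(H_+)$, i.e.\ $\nu(H_+)\geq 1$. Resolving one crossing of $\sigma_1^2$ produces $\sigma_1$: this is a band move with $L_1=\bigcirc$ (one component) and $L_2=H_+$, so Property (A) gives $\nu(H_+)-1\leq \nu(\bigcirc)=0$, i.e.\ $\nu(H_+)\leq 1$. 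Combining yields $\nu(H_+)=1$.

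Finally I would compute $\nu(H_-)=0$. Reversing both orientations of $H_+$ preserves the link (the linking number is unchanged), while mirroring sends $H_+$ to $H_-$, so $-H_+^*=H_-$. Property (C) applied to $L=H_+$ then gives
\[0\leq \nu(H_+)+\nu(H_-)\leq 1,\]
which combined with $\nu(H_+)=1$ forces $\nu(H_-)\leq 0$. For the matching lower bound, realize $H_-$ as the closure of $\sigma_1^{-2}\in B_2$ and resolve one negative crossing to obtain the closure of $\sigma_1^{-1}$, the unknot; this is a band move with $L_1=\bigcirc$ and $L_2=H_-$, so Property (A) yields $\nu(H_-)\geq \nu(\bigcirc)=0$. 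Hence $\nu(H_-)=0$.

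There is no serious obstacle here; the only thing to be careful about is matching the direction of each band move to the asymmetric inequality in Property (A) (the knot-like side is $L_1$, the extra-component side is $L_2$) and the convention $-L^*$ for the mirror-with-reversed-orientation. The key insight is simply that the braid word $\sigma_1^k$ interpolates between the unknot, the Hopf links, and the trefoils via successive oriented resolutions, all of which are oriented band moves.
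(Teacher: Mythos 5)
Your proof is correct and follows essentially the same route as the paper: Properties (C) and (D) together with invertibility of the trefoil give $\nu(T_{2,3}^*)=-1$, and Property (A) applied to the oriented band moves (Seifert resolutions) relating the closures of $\sigma_1^3$, $\sigma_1^2$, $\sigma_1$ sandwiches $\nu(H_+)=1$. The only minor difference is in the upper bound $\nu(H_-)\leq 0$: the paper uses the band move from $T_{2,3}^*$ to $H_-$ (mirroring your $H_+$ sandwich, giving $\nu(H_-)\leq\nu(T_{2,3}^*)+1=0$), whereas you deduce it from Property (C) applied to $L=H_+$ together with the identity $-H_+^*=H_-$; both are valid and essentially equivalent.
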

\begin{proof}
The first equality follows directly from Properties (C) and (D) in the definition of slice-torus invariant, and from the fact that $T_{2,3}^{*} = -T_{2,3}^{*}$. As concerns the other two equalities, notice that $H_{\pm}$ can be obtained from both the positive (resp. negative) trefoil knot and the unknot via a band move. Thus, it follows from Property (A) that
\[ 1 = \nu (T_{2,3}) \leq \nu (H_{+}) \leq \nu (\bigcirc) + 1 = 1.\]
A similar reasoning works for the negative Hopf link.
\end{proof}
Even though Property (C) implies that $\nu(-K^*)=-\nu(K)$ for each knot $K$, the previous lemma disproves the analogue of this result for multi-component links.

We now turn to another property of the slice torus link invariants: the additivity under connected sum of knots.
\begin{prop}\label{prop:additivity_for_knots}
Let $K_{1}$ and $K_{2}$ be two oriented knots. Then, for each slice-torus link invariant $\nu$ we have
\[\nu (K_{1} \# K_{2}) = \nu(K_1) + \nu(K_{2}),\]
where $\#$ denotes the connected sum.
\end{prop}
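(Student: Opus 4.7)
The plan is to combine Property (B), a fusion band move, and the knot-level mirror symmetry that follows from Property (C) applied to one-component links.

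First I would set $L_{2}=K_{1}\sqcup K_{2}$ and observe that by Property (B) we have $\nu(L_{2})=\nu(K_{1})+\nu(K_{2})$. Next, choose a fusion band between the two components whose result is the connected sum $L_{1}=K_{1}\# K_{2}$; this is an oriented band move of the type pictured in Figure \ref{figure:band_move}, decreasing the number of components by one. Property (A) then yields
\[
\nu(K_{1})+\nu(K_{2})-1\;\leq\;\nu(K_{1}\# K_{2})\;\leq\;\nu(K_{1})+\nu(K_{2}),
\]
so one of the two desired inequalities is immediate. The task is now to upgrade the lower bound by a single unit.

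To obtain the matching lower inequality I would play the same game on the mirror image. Specializing Property (C) to the one-component case gives $\nu(-J^{*})=-\nu(J)$ for every knot $J$. Connected sum commutes with mirroring and orientation reversal, so $-(K_{1}\# K_{2})^{*}=(-K_{1}^{*})\#(-K_{2}^{*})$. Applying the bound just established to the pair $(-K_{1}^{*},-K_{2}^{*})$ and then using $\nu(-J^{*})=-\nu(J)$ for $J\in\{K_{1},K_{2},K_{1}\# K_{2}\}$ transforms the upper estimate into
\[
\nu(K_{1})+\nu(K_{2})\;\leq\;\nu(K_{1}\# K_{2}).
\]
Comparing with the previous display forces equality.

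I do not anticipate a genuine obstacle: the argument is essentially a symmetrization trick, and each of its two ingredients (the fusion-band inequality and the knot-level mirror identity) is a direct consequence of the axioms. The only point that deserves a line of justification is the compatibility $-(K_{1}\# K_{2})^{*}=(-K_{1}^{*})\#(-K_{2}^{*})$, which is standard and uses only that the connected sum of oriented knots is well defined. Note also that the proof uses Property (C) in a genuinely one-sided way — it never invokes the multi-component version — so the same argument goes through verbatim if one adopts the weaker Property $(C^{\prime})$ discussed earlier.
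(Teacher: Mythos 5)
Your argument is correct and is essentially the same as the paper's: bound $\nu(K_1\#K_2)$ from above by $\nu(K_1\sqcup K_2)=\nu(K_1)+\nu(K_2)$ via Properties (A) and (B), then apply the same bound to the mirror knots $-K_i^*$ and use the knot-level consequence $\nu(-J^*)=-\nu(J)$ of Property (C) to reverse the inequality. The only cosmetic difference is that you record the one-sided $-1$ lower bound from Property (A) before discarding it; the paper skips straight to the symmetrization.
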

\begin{proof}
Since any connected sum can be obtained from a disjoint union via a band move, Properties (A) and (B) in Definition \ref{defn:slice_torus_link_invariant} tell us that
\[ \nu (K_{1} \# K_{2}) \leq \nu (K_{1} \sqcup K_{2}) =\nu(K_1) + \nu(K_{2}).\]
The same reasoning applied to $-K_{1}^*$ and $-K_{2}^*$, together with Property (C), implies that
\[ -\nu (K_{1} \# K_{2}) = \nu ( -(K_{1} \# K_{2})^* ) \leq \nu (-K_{1}^*) + \nu (- K_{2}^*) = -\nu(K_1) -\nu(K_{2}),\]
and the equality follows.
\end{proof}

\begin{remark}\label{remark:sub-additivity}
Denote by $L_{1} \#_{K_{1},K_{2}} L_{2}$ the connected sum of $L_{1}$ and $L_{2}$ along the components $K_{1}$ and $K_{2}$, respectively. Then, from Properties (A) and (B) it follows that
\[\nu(L_1) + \nu(L_{2}) - 1 \leq \nu (L_{1} \#_{K_{1},K_{2}} L_{2}) \leq \nu(L_1) + \nu(L_{2}),\]
for each slice-torus link invariant $\nu$.
\end{remark}

We conclude this subsection with the following proposition, concerning the behaviour of slice-torus link invariants under crossing changes. Recall that a \emph{cobordism} between two oriented links $L_{0}$ and $L_1$ is an oriented compact surface $\Sigma$, properly embedded in $\mathbb{S}^3\times[0,1]$, such that
\[\Sigma \cap \{ 0 \} = L_{0},\quad \text{and}\quad \Sigma \cap \{ 1\} = -L_{1},\]
where the orientation on the left-hand side of each equation is induced by $\Sigma$, and each connected component of $\Sigma$ has boundary on both $L_0$ and $L_1$. (This kind of cobordism is sometimes called \emph{good cobordism} in the literature, e.g. \cite{Rasmussen, BeliakovaWehrli}.)

\begin{prop}\label{proposition:crossing_change}
Let $D_{+}$ and $D_{-}$ be two link diagrams representing the links $L_{+}$ and $L_{-}$, respectively. Suppose that $D_{-}$ is obtained from $D_{+}$ by replacing a positive crossing with a negative one, then
\[ \nu( L_{-} )  \leq \nu(L_{+})  \leq \nu (L_{-}) + 1,\]
for each slice-torus link invariant $\nu$.
\end{prop}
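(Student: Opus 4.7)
The plan is to realize the crossing change as a composition of two merging oriented band moves connecting $L_+$ and $L_-$ through auxiliary Hopf links, then apply Property (A) together with Property (B) and the values of $\nu$ on the Hopf links computed in Lemma \ref{lemma:values_of_nu_1}. Concretely, the key geometric claim is that $L_-$ can be obtained from the disjoint union $L_+\sqcup H_-$ via two oriented band moves, each merging two components, and symmetrically that $L_+$ can be obtained from $L_-\sqcup H_+$ via two such moves. The construction is local: inside a small ball around the positive crossing of $L_+$, place $H_-$ with one component parallel to each strand of the crossing, and attach two bands connecting each component of $H_-$ to the corresponding strand of $L_+$. An isotopy then shows that the positive crossing is replaced by a negative one in the resulting link, producing $L_-$; the analogous construction, with $H_+$ inserted at the negative crossing of $L_-$, yields $L_+$.

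Given this geometric claim, the two inequalities are immediate. Applying Property (A) of Definition \ref{defn:slice_torus_link_invariant} twice to the two-band cobordism from $L_+\sqcup H_-$ to $L_-$, then using Property (B) and the equality $\nu(H_-)=0$ from Lemma \ref{lemma:values_of_nu_1}, one obtains
\[\nu(L_-)\leq\nu(L_+\sqcup H_-)=\nu(L_+)+\nu(H_-)=\nu(L_+).\]
Symmetrically, the two-band cobordism from $L_-\sqcup H_+$ to $L_+$, together with $\nu(H_+)=1$, yields
\[\nu(L_+)\leq\nu(L_-\sqcup H_+)=\nu(L_-)+\nu(H_+)=\nu(L_-)+1.\]
The component counts are consistent in both cases: one starts with $\ell+2$ components and each band merges two of them, landing on the correct count $\ell$ for $L_\pm$.

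The main obstacle is the geometric claim itself. The naive approach through the oriented resolution $L_0$ of the crossing, using only the two single-band cobordisms $L_\pm\to L_0$ and Property (A), produces only the symmetric bound $|\nu(L_+)-\nu(L_-)|\leq 1$, which is insufficient. The asymmetry of the desired statement, namely that the positive-crossing link carries the larger value of $\nu$, is precisely encoded in the different values $\nu(H_+)=1\neq 0=\nu(H_-)$, and the auxiliary Hopf link is the mechanism that transports this asymmetry into the inequality via Property (A). Verifying rigorously that the local band construction turns $L_+\sqcup H_-$ into $L_-$ requires a careful picture-argument tracking orientations and linking in the local tangle around the crossing, but involves no new ingredients beyond standard diagrammatic manipulations.
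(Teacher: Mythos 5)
Your proof is correct and takes a genuinely different route from the paper's. The paper derives $\nu(L_-)\leq\nu(L_+)$ from a two-band cobordism between $L_+\#_K T^*_{2,3}$ and $L_-$ (Figure~\ref{figure:connected_sum_trefoil}), the connected-sum subadditivity of Remark~\ref{remark:sub-additivity}, and $\nu(T^*_{2,3})=-1$; it then gets $\nu(L_+)\leq\nu(L_-)+1$ from the direct two-band cobordism through the oriented resolution (Figure~\ref{figure:crossing_change}). You derive both inequalities by one uniform mechanism: a genus-zero cobordism with two fusion band moves from $L_+\sqcup H_-$ to $L_-$, and dually from $L_-\sqcup H_+$ to $L_+$, followed by two applications of Property (A), one of Property (B), and Lemma~\ref{lemma:values_of_nu_1}. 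This is more symmetric and avoids Remark~\ref{remark:sub-additivity} entirely. Your geometric claim is indeed true: the crossing change defines a singular concordance from $L_+$ to $L_-$ with a single transverse double point, and excising a small four-ball around the double point (sliding it to the incoming boundary level through a tube disjoint from the surface) produces exactly a genus-zero two-fusion cobordism from $L_+\sqcup H_\epsilon$ to $L_-$; that $\epsilon=-$ is forced by tight examples such as $L_+=T_{2,3}$, $L_-=\bigcirc$, where the putative bound $\nu(T_{2,3})\leq\nu(\bigcirc)+\nu(H_-)=0$ would be false, whereas $\nu(T_{2,3})\leq\nu(\bigcirc)+\nu(H_+)=1$ is an equality. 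Your diagnosis that the naive route through the oriented resolution produces only $|\nu(L_+)-\nu(L_-)|\leq 1$ and that the asymmetry is carried by $\nu(H_+)\ne\nu(H_-)$ is also correct. The one caveat: the local band-move picture is stated but not verified, and that step is the actual geometric content of the proposition — the analogue of the paper's Figures~\ref{figure:connected_sum_trefoil} and~\ref{figure:crossing_change} — so a complete write-up must still supply either the diagrammatic movie or the excision argument sketched above.
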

\begin{proof}
The links $L_{+} \#_{K} T_{2,3}^*$ and $L_{-}$ are related by two band moves, where $K$ is a component of $L_{+}$ corresponding to a component of $D_{+}$ passing through the crossing changed. A movie describing the two band moves is shown in Figure \ref{figure:connected_sum_trefoil}.

The cobordism described in Figure \ref{figure:connected_sum_trefoil} can have either genus $0$ or genus $1$, depending on whether the strands involved in the first band move belong to different components or not. In both cases, combining the inequalities given by Property (A), we obtain
\[ \nu(L_{-})  - 1 \leq \nu (L_{+} \#_{K} T_{2,3}^*).\]
\begin{figure}[H]
\centering
\begin{tikzpicture}[scale =.35]

\draw[thick,->, >=stealth] (6,5) .. controls +(1,1) and +(3,0) .. (6,3) ;
\pgfsetlinewidth{20*\pgflinewidth}
\draw[white] (5,5.2) .. controls +(2,0) and +(-2,0) .. (8,2) ;
\pgfsetlinewidth{.05*\pgflinewidth}
\draw[thick,->, >=stealth] (5,5.2) .. controls +(2,0) and +(-2,0) .. (8,2) ;

\pgfsetlinewidth{20*\pgflinewidth}
\draw[white] (0,1) .. controls +(5,0) and +(-1,-1) .. (6,5) ;
\pgfsetlinewidth{.05*\pgflinewidth}
\draw[thick,->, >=stealth] (0,1) .. controls +(5,0) and +(-1,-1) .. (6,5) ;

\draw[thick] (6,5) .. controls +(.15,.15) and +(-.1,0) .. (6.42,5.2) ;
\pgfsetlinewidth{20*\pgflinewidth}
\draw[white] (0,4) .. controls +(3,0) and +(-4,0) .. (8,1) ;
\pgfsetlinewidth{.05*\pgflinewidth}
\draw[thick,->, >=stealth] (0,4) .. controls +(3.5,0) and +(-4.5,0) .. (8,1) ;

\pgfsetlinewidth{20*\pgflinewidth}
\draw[white] (6,3) .. controls +(-3,0) and +(-2,0) .. (5,5.2) ;
\pgfsetlinewidth{.05*\pgflinewidth}
\draw[thick,->, >=stealth] (6,3) .. controls +(-3,0) and +(-2,0) .. (5,5.2) ;

\draw[dashed] (3.3, 3.35) circle (1);

\draw[very thick,->, >=stealth] (8.5,3) -- (11.5,3);
\node at (10,3.75) {Band};
\node at (10,2.25) {move};
\begin{scope}[shift = {+(12,0)}]

\draw[thick,->, >=stealth] (6,5) .. controls +(1,1) and +(3,0) .. (6,3) ;
\pgfsetlinewidth{20*\pgflinewidth}
\draw[white] (5,5.2) .. controls +(2,0) and +(-2,0) .. (8,2) ;
\pgfsetlinewidth{.05*\pgflinewidth}
\draw[thick,->, >=stealth] (5,5.2) .. controls +(2,0) and +(-2,0) .. (8,2) ;
\draw[thick] (6,5) .. controls +(.15,.15) and +(-.1,0) .. (6.42,5.2) ;
\pgfsetlinewidth{20*\pgflinewidth}
\draw[white] (0,1) .. controls +(5,0) and +(-1,-1) .. (6,5) ;
\pgfsetlinewidth{.05*\pgflinewidth}
\draw[thick,->, >=stealth] (0,1) .. controls +(5,0) and +(-1,-1) .. (6,5) ;

\draw[thick,->, >=stealth] (0,3) .. controls +(3.5,0) and +(-1,0) .. (5,5.2) ;

\pgfsetlinewidth{20*\pgflinewidth}
\draw[white] (6,3) .. controls +(-6,0) and +(-3,0) .. (8,1) ;
\pgfsetlinewidth{.05*\pgflinewidth}
\draw[thick,->, >=stealth] (6,3) .. controls +(-6,0) and +(-3,0) .. (8,1) ;

\draw[dashed] (4, 2.5) circle (1.25);

\draw[very thick,->, >=stealth] (8.5,3) -- (11.5,3);
\node at (10,3.75) {$R_2$};
\node at (10,2.25) {move};
\end{scope}

\begin{scope}[shift = {+(24,0)}]

\draw[thick,->, >=stealth] (6,5) .. controls +(1,1) and +(3,0) .. (6,3) ;
\pgfsetlinewidth{20*\pgflinewidth}
\draw[white] (5,5.2) .. controls +(2,0) and +(-2,0) .. (8,2) ;
\pgfsetlinewidth{.05*\pgflinewidth}
\draw[thick,->, >=stealth] (5,5.2) .. controls +(2,0) and +(-2,0) .. (8,2) ;

\pgfsetlinewidth{20*\pgflinewidth}
\draw[white] (0,1) .. controls +(5,0) and +(-1,-1) .. (6,5) ;
\pgfsetlinewidth{.05*\pgflinewidth}
\draw[thick,->, >=stealth] (0,1) .. controls +(5,0) and +(-1,-1) .. (6,5) ;
\draw[thick,->, >=stealth] (0,3) .. controls +(3.5,0) and +(-1,0) .. (5,5.2) ;
\draw[thick] (6,5) .. controls +(.15,.15) and +(-.1,0) .. (6.42,5.2) ;
\draw[thick,->, >=stealth] (6,3) .. controls +(-6,0) and +(-3,0) .. (8,1) ;
\pgfsetlinewidth{20*\pgflinewidth}
\draw[white] (0,1) .. controls +(5,0) and +(-1,-1) .. (6,5) ;
\pgfsetlinewidth{.05*\pgflinewidth}
\draw[thick,->, >=stealth] (0,1) .. controls +(5,0) and +(-1,-1) .. (6,5) ;

\draw[dashed] (3, 3) circle (1);

\draw[very thick,->, >=stealth] (8.5,3) -- (11.5,3);
\node at (10,3.75) {Band};
\node at (10,2.25) {move};
\end{scope}
\begin{scope}[shift = {+(36,0)}]

\draw[thick,->, >=stealth] (6,5) .. controls +(1,1) and +(3,0) .. (6,3) ;
\pgfsetlinewidth{20*\pgflinewidth}
\draw[white] (5,5.2) .. controls +(2,0) and +(-2,0) .. (8,2) ;
\pgfsetlinewidth{.05*\pgflinewidth}
\draw[thick,->, >=stealth] (5,5.2) .. controls +(2,0) and +(-2,0) .. (8,2) ;

\draw[thick] (6,5) .. controls +(.15,.15) and +(-.1,0) .. (6.42,5.2) ;

\draw[thick,->, >=stealth] (0,4) .. controls +(3.5,0) and +(-4.5,0) .. (8,1) ;

\draw[thick,->, >=stealth] (6,3) .. controls +(-3,0) and +(-2,0) .. (5,5.2) ;
\pgfsetlinewidth{20*\pgflinewidth}
\draw[white] (0,1) .. controls +(5,0) and +(-1,-1) .. (6,5) ;
\pgfsetlinewidth{.05*\pgflinewidth}
\draw[thick,->, >=stealth] (0,1) .. controls +(5,0) and +(-1,-1) .. (6,5) ;

\end{scope}
\end{tikzpicture}
\caption{Two band moves relating $L_{+} \#_{K} T_{2,3}^*$ and $L_{-}$.}\label{figure:connected_sum_trefoil}
\end{figure}
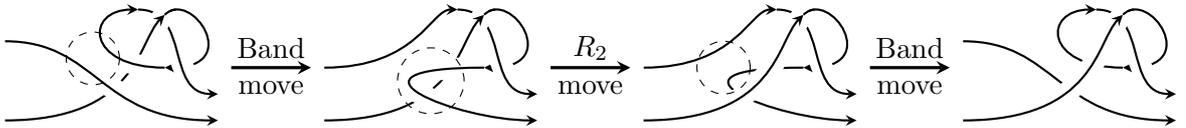
Putting together the above inequality, the inequalities in Remark \ref{remark:sub-additivity}, and the computations in Lemma \ref{lemma:values_of_nu_1}, we obtain
\[\nu(L_{-})  - 1 \leq \nu (L_{+} \#_{K} T_{2,3}^*) \leq  \nu (L_{+}) + \nu ( T_{2,3}^*) = \nu( L_{+}) -1,\]
which is the first inequality in the statement.
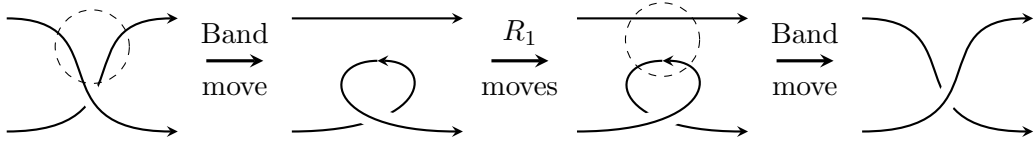
\begin{figure}[H]
\centering
\begin{tikzpicture}[scale = .75]

\draw[thick,->, >=stealth] (0,-.5) .. controls +(2.5,0) and +(-2,0) .. (3,1.5) ;
\pgfsetlinewidth{20*\pgflinewidth}
\draw[white](0,1.5) .. controls +(2,0) and +(-2.5,0) .. (3,-.5) ;
\pgfsetlinewidth{.05*\pgflinewidth}

\draw[thick,->, >=stealth] (0,1.5) .. controls +(2,0) and +(-2.5,0) .. (3,-.5) ;

\draw[dashed] (1.5,1) circle (.65);

\draw[very thick,->, >=stealth] (3.5,.75) -- (4.5,.75);
\node at (4,1.25) {Band};
\node at (4,.25) {move};

\begin{scope}[shift = {+(5,0)}]
\draw[thick,->, >=stealth] (0,-.5) .. controls +(2.5,0) and +(1,0) .. (1.5,.75) ;
\pgfsetlinewidth{20*\pgflinewidth}
\draw[white](1.5,.75) .. controls +(-1,0) and +(-2.5,0) .. (3,-.5) ;
\pgfsetlinewidth{.05*\pgflinewidth}

\draw[thick,->, >=stealth] (1.5,.75) .. controls +(-1,0) and +(-2.5,0) .. (3,-.5) ;

\draw[thick,->, >=stealth] (0,1.5) .. controls +(2,0) and +(-2.5,0) .. (3,1.5) ;
\draw[very thick,->, >=stealth] (3.5,.75) -- (4.5,.75);
\node at (4,1.25) {$R_{1}$};
\node at (4,.25) {moves};
\end{scope}

\begin{scope}[shift = {+(10,0)}]
\draw[thick,->, >=stealth] (1.5,.75) .. controls +(-1,0) and +(-2.5,0) .. (3,-.5) ;
\pgfsetlinewidth{20*\pgflinewidth}
\draw[white]  (0,-.5) .. controls +(2.5,0) and +(1,0) .. (1.5,.75) ;
\pgfsetlinewidth{.05*\pgflinewidth}

\draw[thick,->, >=stealth] (0,-.5) .. controls +(2.5,0) and +(1,0) .. (1.5,.75) ;

\draw[thick,->, >=stealth] (0,1.5) .. controls +(2,0) and +(-2.5,0) .. (3,1.5) ;

\draw[dashed] (1.5,1.125) circle (.65);
\draw[very thick,->, >=stealth] (3.5,.75) -- (4.5,.75);
\node at (4,1.25) {Band};
\node at (4,.25) {move};
\end{scope}

\begin{scope}[shift={+(15,0)}]
\draw[thick,->, >=stealth] (0,1.5) .. controls +(2,0) and +(-2.5,0) .. (3,-.5) ;
\pgfsetlinewidth{20*\pgflinewidth}
\draw[white](0,-.5) .. controls +(2.5,0) and +(-2,0) .. (3,1.5) ;
\pgfsetlinewidth{.05*\pgflinewidth}

\draw[thick,->, >=stealth]  (0,-.5) .. controls +(2.5,0) and +(-2,0) .. (3,1.5) ;

\end{scope}

\end{tikzpicture}
\caption{Two band moves relating $L_{+}$ and $L_{-}$.}\label{figure:crossing_change}
\end{figure}
To recover the second inequality, consider the cobordism in Figure \ref{figure:crossing_change}. There are two cases to consider, depending whether or not the first band move merges two components. In both cases, Property (A) tells us that
\[\nu (L_{+})  - 1 \leq \nu (L_{-}),\]
and the result follows. 
\end{proof}

\subsection{A bound on the slice-genus}

In analogy with the case of the slice-torus invariants, each slice-torus link invariant gives rise to a lower bound for the slice genus. This bound is a consequence of the following, more general, proposition.

\begin{prop}\label{proposition:slice_genus_bound}
Let $\nu$ be a slice-torus link invariant. Given two links $L_0$ and $L_1$ with $\ell_0$ and $\ell_{1}$ components, respectively, such that there exists a cobordism $\Sigma \subset \mathbb{S}^3 \times [0,1]$ from $L_0$ to $L_{1}$  with $k$ connected components, then 
\[ \nu (L_1) - g(\Sigma) -\ell_{1} +k  \leq \nu (L_0) \leq \nu(L_1) + g(\Sigma) + \ell_0 - k,\]
where $g(\Sigma)$ denotes the genus of $\Sigma$. In particular, when $\Sigma$ is connected we have
\[ \nu (L_1) - g(\Sigma) -\ell_{1} +1  \leq \nu (L_0) \leq \nu(L_1) + g(\Sigma) + \ell_0 - 1\:.\]
\end{prop}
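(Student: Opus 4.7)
The plan is to decompose $\Sigma$ using the projection $\pi_2 \colon \mathbb{S}^3 \times [0,1] \to [0,1]$, restricted to $\Sigma$, as a Morse function after a small generic ambient isotopy. Between consecutive critical values the slice $\Sigma \cap (\mathbb{S}^3 \times \{t\})$ is a link in $\mathbb{S}^3$, and each critical point produces an elementary cobordism of one of four types: a \emph{birth} (0-handle adding a local split unknotted circle), a \emph{death} (2-handle removing one), a \emph{merging saddle} (oriented band move reducing the number of components by one), or a \emph{splitting saddle} (oriented band move increasing the number of components by one). Denote the number of each by $b_0$, $b_2$, $s_+$, $s_-$ respectively.

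The effect of each elementary move on $\nu$ follows from the defining axioms. A birth or death passes from $L_t$ to $L_t \sqcup \bigcirc$ (or vice versa), and since $\nu(\bigcirc)=0$ (the unknot is $T_{1,1}$), Property (B) gives that $\nu$ is unchanged. A merging saddle is precisely the situation of Property (A), so it changes $\nu$ by an amount in $[-1,0]$; a splitting saddle changes $\nu$ by an amount in $[0,1]$. Summing along the decomposition yields
\[
-s_+ \;\leq\; \nu(L_1)-\nu(L_0) \;\leq\; s_-.
\]

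To express $s_{\pm}$ in terms of the topological data of $\Sigma$ I would invoke the Euler characteristic identity $b_0+b_2-s_+-s_- = \chi(\Sigma) = 2k - 2g(\Sigma) - \ell_0 - \ell_1$ together with the net change in components $\ell_1-\ell_0 = b_0 - b_2 + s_- - s_+$. Adding and subtracting these two identities gives
\[
s_+ \;=\; b_0 + g(\Sigma) + \ell_0 - k, \qquad s_- \;=\; b_2 + g(\Sigma) + \ell_1 - k.
\]

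The main technical step, which is where the hypothesis that every connected component of $\Sigma$ meets both $L_0$ and $L_1$ enters, is to \emph{minimise} the saddle counts by arranging a decomposition with $b_0=0$ (for the upper bound on $\nu(L_0)-\nu(L_1)$) and, separately, one with $b_2=0$ (for the lower bound). This is a standard handle-cancellation argument: any 0-handle introduces a split unknot that belongs to a component which eventually reaches $L_1$, so it must be joined to an existing component by a subsequent 1-handle, and this 0-handle/1-handle pair can be cancelled via a Cerf-theoretic move; symmetrically for 2-handles against preceding splitting saddles. Once these cancellations are performed the formulae above give $s_+ = g(\Sigma)+\ell_0-k$ and $s_- = g(\Sigma)+\ell_1-k$, and combining with the previous displayed inequality yields precisely the asserted bounds, with the connected case recovered by setting $k=1$.
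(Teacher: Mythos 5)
Your proposal is correct and reaches the same estimate, but it organizes the Morse-theoretic bookkeeping differently from the paper. The paper isotopes $\Sigma$ into the normal form of Kawauchi--Shibuya--Suzuki \cite{Kawauchietal}, and then observes that the initial segment (all births followed by the fusions absorbing them) and the final segment (fissions spawning unknots followed by their deaths) are \emph{strong concordances}, so they do not change $\nu$; the middle segment contributes only $g(\Sigma)+\ell_0-k$ fusions (resp.\ $g(\Sigma)+\ell_1-k$ fissions), and Property (A) applied there gives the two inequalities directly. You instead keep a generic Morse function, compute the fusion and fission counts $s_\pm$ from $\chi(\Sigma)$ and the net component change, and then eliminate the $b_0$ (resp.\ $b_2$) correction by cancellation. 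Both routes are valid and essentially the same length; yours has the small advantage of exhibiting the identity $s_+-s_-=\ell_0-\ell_1$ and $s_++s_-=b_0+b_2-\chi(\Sigma)$ explicitly, while the paper's has the advantage that the normal-form statement is cited rather than asserted.

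Two small points worth tightening. First, the handle-cancellation claim is exactly where the work lies, and it is the same depth as the normal-form theorem the paper cites; you should either cite \cite{Kawauchietal} (or an equivalent statement about eliminating local extrema of good cobordisms in $\mathbb{S}^3\times[0,1]$) rather than label it ``standard,'' or spell out the Cerf move: slide the first $1$-handle attaching to a freshly born unknot down past the intervening critical levels (possible by general position in a $4$-manifold), then cancel; iterate, noting each cancellation also removes one fusion, so the identity $s_+=b_0+g+\ell_0-k$ is preserved throughout and terminates at $b_0=0$. Second, a wording slip: you say a birth ``belongs to a component which eventually reaches $L_1$,'' but the hypothesis that is actually being used to eliminate local minima is that every component of $\Sigma$ also meets $L_0$ -- a component touching only $L_1$ would be forced to retain at least one interior minimum. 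Both halves of the good-cobordism condition are needed, one for $b_0$ and one for $b_2$, so it is worth stating that symmetrically.
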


\begin{proof}

By standard arguments, up to a boundary fixing ambient isotopy we may assume that the projection onto the second factor
\[pr_{2}:  \mathbb{S}^3 \times [0,1] \to [0,1], \]
when restricted to $\Sigma\setminus \partial \Sigma$ has only a finite number of (non-degenerate) critical values, let us denote these values by $0 < t_{0} <... < t_{ h}< 1$. Basic Morse theory tells us that we may assume the links $L^{\pm}_{i} = pr_{2}^{-1}(t_{i}\pm \epsilon) \cap \Sigma$, where $i \in \{ 0,..., h\}$ and $\epsilon>0$ is sufficiently small, to be obtained one from the other by either an oriented band move (1-handle attachment), the split union with an unknot (0-handle attachment), or the removal of an unknotted split component (2-handle attachment). Furthermore, we have that $pr_{2}^{-1}([t_{i}+\epsilon,t_{i+1} - \epsilon])$ is (topologically) a disjoint union of cylinders, and that $L_{i}^{+}$ and $L_{i+1}^{-}$ are isotopic.

Thanks to \cite[Theorem 3.1]{Kawauchietal}, up to isotopy, the order of the attachments can be chosen as follows
\begin{enumerate}
\item we start with $L_0 = \Sigma \cap  \mathbb{S}^{3} \times\{ 0 \}$;
\item we attach all the $0$-handles;
\item we perform a sequence of fusion 1-handles (i.e. 1-handles attachments lowering the number of components) merging all the newly attached $0$-handles;
\item we perform another sequence of $\ell - k$ fusion 1-handles until we end up with a $k$-component link diagram. Each fusion move merges two knots which belong to the same component of $\Sigma$;
\item we perform $g$ fission 1-handles (i.e. 1-handles attachments increasing the number of components), followed by $g$ fusion 1-handles (with $g = g(\Sigma)$);
\item we perform a sequence of fission 1-handles (and isotopies) ending up into the link obtained as a split union of $L_1$ and an unlink;
\item we attach all $2$-handles on the unlink;
\item we end up with $L_1 = \Sigma \cap \mathbb{S}^{3} \times \{ 1 \}$.
\end{enumerate}
A schematic representation of the cobordism $\Sigma$ when the surface is connected can be seen in Figure \ref{figure:cobordism}. As we highlighted in Figure \ref{figure:cobordism}, the cobordism obtained from the attachments described in point 0 to 3 (resp. in points 6 and 8) is a strong concordance between $L_0$ and a link $L_0^{\prime}$ (resp. a link $L_1^\prime$ and $L_1$).

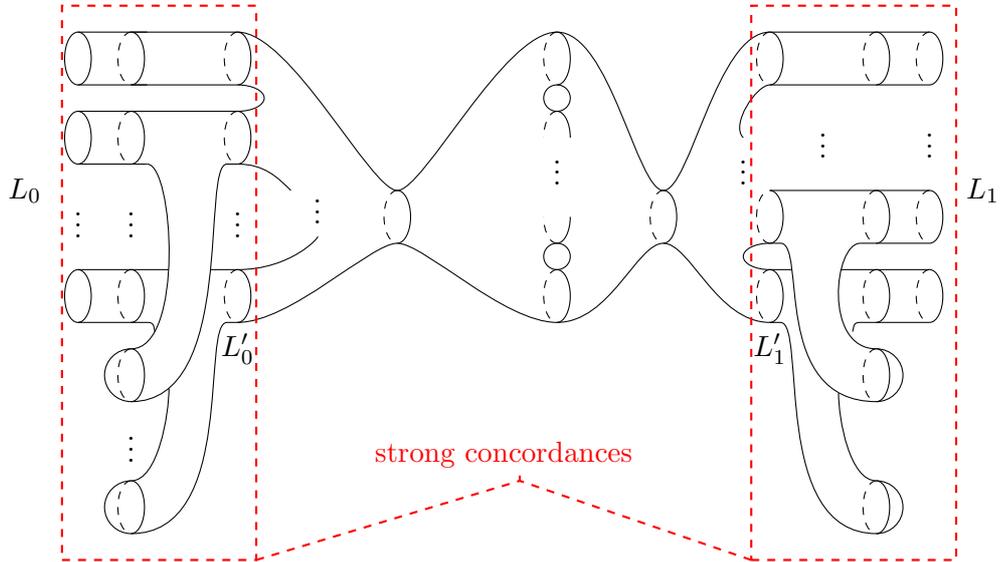
\begin{figure}[h]
\centering
\begin{tikzpicture}[scale = .35]
\draw (0, 0) circle (.5 and 1);
\draw (0,-3) circle (.5 and 1);
\draw (0,-9) circle (.5 and 1);

\node at (0,-6) {$\vdots$};

\draw (0,1) -- (6,1);
\draw (0,-1) -- (6,-1);
\draw (0,-2) -- (6,-2);
\draw (0,-4) -- (2.6,-4);
\draw (0,-8) -- (3.41,-8);
\draw (0,-10)-- (2.6,-10) .. controls +(.25,0) and +(-.05,.25) ..  (2.89, -10.35);
\draw (4.99,-8) -- (6,-8);

\draw (2, 1) arc (90:-90:.5 and 1);
\draw (2,-2) arc (90:-90:.5 and 1);
\draw (2,-8) arc (90:-90:.5 and 1);

\draw[dashed] (2, 1) arc (90:270:.5 and 1);
\draw[dashed] (2,-2) arc (90:270:.5 and 1);
\draw[dashed] (2,-8) arc (90:270:.5 and 1);

\node at (2,-6) {$\vdots$};

\draw (2, -11) arc (90:-90:.5 and 1);
\draw (2,-16) arc (90:-90:.5 and 1);

\draw[dashed] (2, -11) arc (90:270:.5 and 1);
\draw[dashed] (2,-16) arc (90:270:.5 and 1);

\draw (2,1) -- (2.6,1);
\draw (2,-1) -- (2.6,-1);

\node at (2,-14.5) {$\vdots$};

\draw (2, -11) arc (90:270:1);
\draw (2,-16) arc (90:270:1);

\node at (6,-6) {$\vdots$};

\draw (6, 1) arc (90:-90:.5 and 1);
\draw (6,-2) arc (90:-90:.5 and 1);
\draw (6,-8) arc (90:-90:.5 and 1);

\draw[dashed] (6, 1) arc (90:270:.5 and 1);
\draw[dashed] (6,-2) arc (90:270:.5 and 1);
\draw[dashed] (6,-8) arc (90:270:.5 and 1);

\draw (2, -11) .. controls +(2,0) and +(1,0) .. (2.6,-4);
\draw (2, -13) .. controls +(4,0) and +(-1,0) .. (5.6,-4)-- (6,-4);

\draw (2, -16) .. controls +(1,0) and +(0,-1) .. (3.42,-12.5);
\draw (2, -18) .. controls +(4,0) and +(-1,0) .. (5.6,-10)-- (6,-10);

\draw (12,-5) arc (90:-90:.5 and 1);

\draw[dashed] (12, -5) arc (90:270:.5 and 1);

\draw (6,    1) .. controls +(2,0) and +(-1,0) .. (12,-5);
\draw (6, -10) .. controls +(2,0) and +(-1,0) .. (12,-7);
\draw (6, -8) .. controls +(2,0) and +(.1,0) .. (9,-6.75);
\draw (6, -4) .. controls +(1,0) and +(-.5,.5) .. (8,-5);
\draw (6, -1) arc (90:-90:1 and .5);

\draw (18,    1) .. controls +(-2,0) and +(1,0) .. (12,-5);
\draw (18, -10) .. controls +(-2,0) and +(1,0) .. (12,-7);
\draw (18, -7) arc (90:270:.5);
\draw (18, -1) arc (90:270:.5);

\draw (18, 1) arc (90:-90:.5 and 1);
\draw (18,-2) arc (90:0:.5 and 1);
\draw (18,-8) arc (90:-90:.5 and 1);
\draw (18,-7) arc (-90:0:.5 and 1);

\draw[dashed] (18, 1) arc (90:270:.5 and 1);
\draw[dashed] (18,-2) arc (90:180:.5 and 1);
\draw[dashed] (18,-8) arc (90:270:.5 and 1);
\draw[dashed] (18,-7) arc (270:180:.5 and 1);

\draw (18,    1) .. controls +(2,0) and +(-1,0) .. (22,-5);
\draw (18, -10) .. controls +(2,0) and +(-1,0) .. (22,-7);
\draw (18, -7) arc (90:-90:.5);
\draw (18, -1) arc (90:-90:.5);

\draw (22,-5) arc (90:-90:.5 and 1);
\draw[dashed] (22, -5) arc (90:270:.5 and 1);

\draw (26,    1) .. controls +(-2,0) and +(1,0) .. (22,-5);
\draw (26, -10) .. controls +(-2,0) and +(1,0) .. (22,-7);
\draw (26, -7) arc (90:270:1 and .5);
\draw (26, -1) .. controls +(-.5,0) and +(-.5,.5) .. (25,-3);

\draw (26, 1) arc (90:-90:.5 and 1);
\draw (26,-8) arc (90:-90:.5 and 1);
\draw (26,-7) arc (-90:90:.5 and 1);

\draw[dashed] (26, 1) arc (90:270:.5 and 1);
\draw[dashed] (26,-8) arc (90:270:.5 and 1);
\draw[dashed] (26,-7) arc (270:90:.5 and 1);
%
%

%
\draw[dashed,red, thick] (-.6,2) rectangle (6.7,-19);
\draw[dashed,red, thick] (25.3,2) rectangle (33,-19);
\node at (16,-15) {\textcolor{red}{strong concordances}};
\draw[dashed,red, thick] (6.7,-19) -- (16.6,-16) -- (16.6, -15.75);
\draw[dashed,red, thick] (25.3,-19) -- (16.6,-16);

\node at (-2,-5) {$L_0$};
\node at (34,-5) {$L_1$};
\node at (26,-11) {$L_{1}^{\prime}$};
\node at (6,-11) {$L_0^{\prime}$};

\begin{scope}[xscale = -1, shift={+(-32,0)}]

\draw (0,1) -- (6,1);
\draw (0,-1) -- (6,-1);
\draw (0,-7) -- (2.6,-7);
\draw (0,-5) -- (6,-5);
\draw (0,-8) -- (3.35,-8);
\draw (0,-10)-- (2.6,-10) .. controls +(.25,0) and +(-.05,.25) ..  (2.89, -10.35);
\draw (5.18,-8) -- (6,-8);

\draw (2, -11) .. controls +(2,0) and +(1,0) .. (2.6,-7);
\draw (2, -13) .. controls +(4,0) and +(-1,0) .. (5.6,-7)-- (6,-7);

\draw (2, -16) .. controls +(1,0) and +(0,-1) .. (3.42,-12.68);
\draw (2, -18) .. controls +(4,0) and +(-1,0) .. (5.6,-10)-- (6,-10);
\end{scope}

\draw (30,1) arc (90:-90:.5 and 1);
\draw[dashed] (30,1) arc (90:270:.5 and 1);

\draw (30,-5) arc (90:-90:.5 and 1);
\draw[dashed] (30,-5) arc (90:270:.5 and 1);

\draw (30,-8) arc (90:-90:.5 and 1);
\draw[dashed] (30,-8) arc (90:270:.5 and 1);

\draw (30,-11) arc (90:-90:.5 and 1);
\draw[dashed] (30,-11) arc (90:270:.5 and 1);
\draw (30,-11) arc (90:-90:1);

\draw (30,-16) arc (90:-90:.5 and 1);
\draw[dashed] (30,-16) arc (90:270:.5 and 1);
\draw (30,-16) arc (90:-90:1);

\draw (32,1) arc (90:-90:.5 and 1);
\draw[dashed] (32,1) arc (90:270:.5 and 1);

\draw (32,-5) arc (90:-90:.5 and 1);
\draw[dashed] (32,-5) arc (90:270:.5 and 1);

\draw (32,-8) arc (90:-90:.5 and 1);
\draw[dashed] (32,-8) arc (90:270:.5 and 1);
\node at (28,-3) {$\vdots$};
\node at (32,-3) {$\vdots$};
\node at (9,-5.5) {$\vdots$};
\node at (18,-4) {$\vdots$};
\node at (25,-4) {$\vdots$};
\end{tikzpicture}
\caption{A schematic description of a connected cobordism $\Sigma$ after the re-ordering of the handles.}\label{figure:cobordism}
\end{figure}

By the strong-concordance invariance of $\nu$, it follows immediately that $\nu(L_0) = \nu(L_0^\prime)$ and $\nu(L_1) = \nu(L_1^\prime)$. Now consider the portion of $\Sigma$ between the link $L_0^\prime$ and $L_1^\prime$, say $\Sigma^\prime$, then it follows from Property (A) that
\[\nu(L_0^\prime)  \leq \nu(L_1^\prime) + g(\Sigma^\prime) +\ell_0 - k.  \]
Since $g(\Sigma^\prime) = g(\Sigma)$, the second inequality in the statement follows. The other inequality is obtained by reversing the roles of $L_0$ and $L_1$.
\end{proof}

Now, as an easy consequence of Proposition \ref{proposition:slice_genus_bound} we obtain the desired lower bound on the slice genus.
\begin{proof}[Proof of Proposition \ref{corollary:bound_on_the_slice_genus}]
Consider a minimal genus surface $\Sigma \subset \mathbb{D}^4$ bounding $L$ ($= \Sigma \cap \mathbb{S}^3$). Without altering the genus we may assume $\Sigma$ to be connected. By removing a small disk from $\Sigma$, we obtain a genus $g_{4}(L)$ cobordism between $L$ and the unknot. Then, Proposition \ref{proposition:slice_genus_bound} tells us that
\[- g_{4}(L) =  \nu (\bigcirc) - g(\Sigma) - 1 + 1 \leq \nu(L) \leq g(\Sigma) + \nu (\bigcirc) + \ell - 1 =  g_{4}(L) + \ell -1.\]
Moreover, if $L$ is strongly slice then $L$ is strongly concordant to an unlink. Since $\nu$ is a strong concordance invariant we have that $\nu(L) = \nu(\bigcirc_{\ell}) = 0$, where $\bigcirc_{\ell}$ denotes the unlink with $\ell$ components.
\end{proof}

Another consequence of Proposition \ref{proposition:slice_genus_bound}, together with Proposition \ref{prop:additivity_for_knots}, is the following corollary.

\begin{cor}\label{corollary:restriction}
If $\nu$ is a slice-torus link invariant, then the restriction of $\nu$ to knots is a slice-torus invariant.\qed
\end{cor}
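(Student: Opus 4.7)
The plan is simply to check the three defining properties of a slice-torus invariant for the restriction of a slice-torus link invariant $\nu$ to the class of knots, using results already established in the paper.

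First I would observe that for knots, strong concordance and (ordinary knot) concordance coincide, so $\nu$ restricted to knots is a concordance invariant. To upgrade this to a group homomorphism $\mathcal{C}\to\mathbb{R}$, I would invoke Proposition \ref{prop:additivity_for_knots}, which gives $\nu(K_1\#K_2)=\nu(K_1)+\nu(K_2)$ for any pair of knots, and combine it with the fact that $\nu(\bigcirc)=0$ (noted in the remark following Lemma \ref{lemma:values_of_nu_1}, or directly from Property (D) applied to $T_{1,p}$) and with $\nu(-K^*)=-\nu(K)$ (a consequence of Property (C) in the knot case, since $\ell=1$ forces $\nu(K)+\nu(-K^*)=0$); these together give the homomorphism property on $\mathcal{C}$.

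Next I would handle the slice genus bound by specializing Proposition \ref{corollary:bound_on_the_slice_genus} to the case $\ell=1$: it immediately yields
\[ -g_4(K)\leq\nu(K)\leq g_4(K)+1-1=g_4(K), \]
which is exactly $|\nu(K)|\leq g_4(K)$. Finally, the torus-knot normalization $\nu(T(p,q))=(p-1)(q-1)/2$ is literally Property (D) in Definition \ref{defn:slice_torus_link_invariant}.

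There is no substantive obstacle here: once the earlier propositions are in hand, the verification is a matter of assembling them. The only mild subtlety is making sure the homomorphism property on $\mathcal{C}$ is extracted cleanly from additivity under connected sum together with $\nu(-K^*)=-\nu(K)$, so that inverses in $\mathcal{C}$ (represented by $-K^*$) are sent to additive inverses in $\mathbb{R}$.
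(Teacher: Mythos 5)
Your proposal is correct and follows essentially the same route the paper intends: Proposition \ref{prop:additivity_for_knots} for additivity under connected sum (hence the homomorphism property on $\mathcal{C}$, using also Property (C) at $\ell=1$ for $\nu(-K^*)=-\nu(K)$), the slice-genus bound specialized to $\ell=1$ for $\lvert\nu(K)\rvert\leq g_4(K)$, and Property (D) for the torus-knot normalization. The only cosmetic difference is that you invoke Proposition \ref{corollary:bound_on_the_slice_genus} where the paper points directly at Proposition \ref{proposition:slice_genus_bound}, but the former is derived from the latter, so the content is identical.
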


At this point the following question arises naturally.

\begin{question}
Let $\nu$ be a slice-torus invariant. Is there a slice-torus link invariant $\widetilde{\nu}$ whose restriction to knots is $\nu$? If such $\widetilde{\nu}$ exists, is it unique?
\end{question}

The authors believe that all the known slice-torus link invariants admit such an extension, and the answer to the above question is left for future work.

\subsection{Combinatorial bounds and the detection of the slice genus}

Using the slice-genus bound proved in Proposition \ref{proposition:slice_genus_bound}, we can adapt the arguments used by Kawamura (\cite{Kawamura}) for slice-torus invariants, and Abe (\cite{Abe}, see also \cite{Lewark14}) for the Rasmussen invariant $s$, to the case of slice-torus link invariants.

Before going into the details, we need to introduce some notation. Let $D$ be an oriented link diagram representing a link $L$. Denote by $n(D)$, $n_{+}(D)$ and $n_{-}(D)$ the number of crossings, positive crossings and negative crossings of $D$, respectively. 
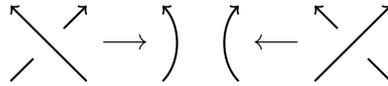
\begin{figure}[H]
\centering
\begin{tikzpicture}[scale = .5]
\draw[thick, ->] (0,0) -- (2,2);
\draw[thick, ->] (10,0) -- (8,2);
\pgfsetlinewidth{40*\pgflinewidth}
\draw[white] (0,2) -- (2,0);
\draw[white] (10,2) -- (8,0);
\pgfsetlinewidth{.025*\pgflinewidth}
\draw[thick,<-] (0,2) -- (2,0);
\draw[thick,<-] (10,2) -- (8,0);

\node at (3,1) {$\longrightarrow$};
\node at (7,1) {$\longleftarrow$};

\draw[thick,<-] (6,2) .. controls +(-.5,-.5) and + (-.5,.5) .. (6,0);
\draw[thick,<-] (4,2) .. controls +(.5,-.5) and + (.5,.5) ..  (4,0);
\end{tikzpicture}
\caption{The oriented resolution of a crossing.}\label{figure:oriented_resolution}
\end{figure} 
The oriented resolution of $D$ is the set of circles (\emph{Seifert circles}) obtained by replacing each crossing with its oriented resolution, as shown in Figure \ref{figure:oriented_resolution}. Denote by $O(D)$ the number of Seifert circles of $D$. If $D$ is non-split then a Seifert surface for the link $L$ can be obtained by considering a disk for each Seifert circle, and for each crossing on the diagram we add a band between the corresponding circles. This procedure goes often under the name of \emph{Seifert algorithm}, and it is easy to see that it produces a connected, compact, oriented surface in $\mathbb{S}^{3}$ bounding the link $L$, which is precisely the definition of Seifert surface (see, for example, \cite[Chapter 5]{Rolfsen}). Furthermore, the genus of the surface $\Sigma_{D}$ obtained via the Seifert algorithm can be easily computed as an exercise, and it turns out that
\[ g(\Sigma_{D}) = 1 +\frac{n(D) - O(D) - \ell}{2}. \]
Now we can take the first step towards our combinatorial bound. More precisely, we prove the following theorem, which asserts that the slice-torus link invariants compute the Seifert genus of positive links, and also the slice genus in the non-split case.
\begin{proof}[Proof of Theorem \ref{teorema:slice_genus_positive}]
We suppose first that $L$ is non-split.
Consider the Seifert surface $\Sigma_{D}$ obtained via the Seifert algorithm from $D$. 
We can remove a small disk from $\Sigma_{D}$, and apply the bound in Proposition \ref{proposition:slice_genus_bound} to obtain the following inequalities
\[\nu(L) \leq g_{4}(L) + \ell - 1 \leq g_{3}(L) + \ell - 1  \leq g(\Sigma_{D}) + \ell - 1  = \frac{n(D) - O(D) + \ell}{2}.\]
Now, starting from $L$ we can apply a sequence of $\ell - 1$ fusion moves as in Figure \ref{figure:positive_fusion_moves} to obtain a positive knot $K$, and a connected genus $0$ cobordism from $K$ to $L$.
\begin{figure}[H]
\centering
\begin{tikzpicture}[scale = .75]

\draw[->](2,0) -- (0,2) ; 
\pgfsetlinewidth{20*\pgflinewidth}
\draw[white,->]  (0,0) -- (2,2) ;
\pgfsetlinewidth{.05*\pgflinewidth}
\draw[->] (0,0) -- (2,2) ;

\draw[thin](.5,.5)  .. controls +(.25,-.25) and +(-.5,-.5) .. (1.75,.25) ;
\pgfsetlinewidth{20*\pgflinewidth}
\draw[white](.25,.25)  .. controls +(.25,-.25) and +(-.5,-.5) .. (1.5,.5) ;
\pgfsetlinewidth{.05*\pgflinewidth}
\draw[thin]  (.25,.25)  .. controls +(.25,-.25) and +(-.5,-.5) .. (1.5,.5) ;
\draw[] (0,0)-- (2,2);

\draw[] (1.25,.75)--  (2,0) ;
\draw[->](2.5,1) -- (3.5,1) ; 
\draw[->] (6,1) .. controls +(0,.5) and +(.5,-.5) .. (4,2); 
\pgfsetlinewidth{20*\pgflinewidth}
\draw[white]  (4,1) .. controls +(0,.5) and +(-.5,-.5) ..(6,2) ;
\pgfsetlinewidth{.05*\pgflinewidth}
\draw[->] (4,1)  .. controls +(0,.5) and +(-.5,-.5) .. (6,2) ;
\draw[] (6,0)  .. controls +(0,.5) and +(0,-.5) .. (4,1) ;
\pgfsetlinewidth{20*\pgflinewidth}
\draw[ white] (4,0)  .. controls +(0,.5) and +(0,-.5) .. (6,1) ;
\pgfsetlinewidth{.05*\pgflinewidth}
\draw[] (4,0)  .. controls +(0,.5) and +(0,-.5) .. (6,1) ;

\node[above left] at (0,0) {$\mathbf{a}$};
\node[above right] at (2,0) {$\mathbf{b}$};
\end{tikzpicture}
\caption{An oriented band move adding a positive crossing to the diagram. Notice that this procedure does not change the number of Seifert circles. Moreover, if the arcs \textbf{a} and \textbf{b} belong to different components then the band move is a fusion move.}\label{figure:positive_fusion_moves}
\end{figure}
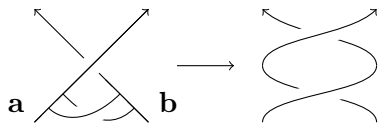

The slice-torus link invariants detect the genus of positive knots (Corollary \ref{corollary:restriction} and \cite[Theorem 4.4]{Kawamura}) and their value can be computed directly from a positive diagram. This leads us to the following sequence of equalities
\[\frac{n(D)  - O(D) + \ell  }{2} =\frac{\overbrace{n(D) + \ell - 1}^{n(D_{K})} - O(D) + 1}{2} = \nu(K),\]
where $D_{K}$ is the positive diagram of $K$ obtained from $D$ by attaching bands as shown in Figure \ref{figure:positive_fusion_moves}.
Finally, from Proposition \ref{proposition:slice_genus_bound} we obtain the inequality
\[\nu(K) \leq \nu(L),\]
and the claim follows.

The statement for split links is proved by observing that all the quantities, except for the slice-genus, involved in the equalities for non-split links are additive under disjoint unions.
\end{proof}

\begin{cor}\label{corollary:slice_genus_pos_torus_links}
Let $\nu$ be a slice-torus link invariant. Then, for each coherently oriented, positive torus link $T$ of type $T_{m,n}$ we have the equalities
\[\nu(T) = g_{3}(T) + \ell - 1 = g_{4}(T) + \ell - 1 = \frac{(n-1)(m-1) + \ell  - 1}{2},\]
where $\ell = GCD(m,n)$ is the number of components of $T$.
\end{cor}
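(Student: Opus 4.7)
The plan is to reduce everything to Theorem~\ref{teorema:slice_genus_positive} applied to the standard positive braid presentation of $T_{m,n}$. Represent $T = T_{m,n}$ (with the coherent orientation) as the closure of the positive braid $(\sigma_{1}\sigma_{2}\cdots\sigma_{m-1})^{n}\in B_{m}$, and let $D$ be the corresponding diagram. This diagram is positive, it has exactly $n(D)=n(m-1)$ crossings, and its oriented resolution is the closure of the trivial $m$-braid, so $O(D)=m$.

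Next, I would verify that $T$ is non-split. The components are $\ell$ parallel copies of $T_{m/\ell,n/\ell}$ on the standard torus, and any two of them have positive linking number, so the link cannot be split (alternatively, one can invoke that a positive closed braid whose permutation has $\ell$ cycles is non-split, as $D$ is connected). Hence the hypotheses of Theorem~\ref{teorema:slice_genus_positive} are satisfied with $\ell_{s}=1$.

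Applying the theorem gives simultaneously
\[
\nu(T)\;=\;g_{3}(T)+\ell-1\;=\;g_{4}(T)+\ell-1\;=\;\frac{n(D)-O(D)+\ell}{2}.
\]
Substituting $n(D)=n(m-1)$ and $O(D)=m$, the right-hand fraction simplifies to
\[
\frac{n(m-1)-m+\ell}{2}\;=\;\frac{(m-1)(n-1)+\ell-1}{2},
\]
which yields the claimed chain of equalities.

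No serious obstacle is expected: the only points requiring care are the correct counts $n(D)$ and $O(D)$ for the braid diagram, and the (essentially classical) observation that the coherently oriented $T_{m,n}$ is non-split. Everything else is an invocation of Theorem~\ref{teorema:slice_genus_positive} followed by elementary arithmetic.
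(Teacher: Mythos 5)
Your proposal is correct and follows the same route as the paper, which simply states that the corollary ``follows immediately from Theorem~\ref{teorema:slice_genus_positive}''; you have spelled out the details (the braid diagram, the counts $n(D)=n(m-1)$, $O(D)=m$, the non-splitness, and the arithmetic) that the paper leaves implicit.
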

\begin{proof}
The statement follows immediately from Theorem \ref{teorema:slice_genus_positive}. 
\end{proof}

Before proceeding further we need some more notation.
First, we need to describe how to associate to $D$ a graph $\Gamma(D)$, called the \emph{ Seifert graph}. The vertices of $\Gamma(D)$ are the Seifert circles, and there is an edge between two vertices for each crossing the corresponding circles share in $D$. An edge of the Seifert graph is \emph{positive} (resp.  \emph{negative}) if the corresponding crossing is positive (resp. negative). Let $s_{+}(D)$ (resp. $s_{-}(D)$) denote the number of connected components of the graph obtained from $\Gamma(D)$ by removing all the negative (resp. positive) edges.

There is another graph $G(D)$ we can obtain from $D$. This graph has one vertex for each component of the link $L$ represented by $D$, and two vertices of $G(D)$ share a crossing if there is at least a negative crossing joining the corresponding components.

Now that all the notation is set into place, we can state the following lemma which is basically due to Kawamura. For the sake of completeness we will spell out the proof.

\begin{lemma}[\cite{Kawamura}, Lemma 5.5]\label{lemma:5.5}
If $L$ is a link with a non-splittable  diagram $D$, then there exists a positive link $L_{+}$, a diagram $D_{+}$ for $L_{+}$, and a cobordism\footnote{Recall that for us each cobordism between links is such that each connected component of the cobordism has boundary touching both links.} $\Sigma_{+}$ from $L$ to $L_{+}$ such that
\[ n(D_{+}) = n_{+}(D) + s_{+}(D) - 1,\quad O(D_{+}) = O(D) \]
and 
\[\chi(\Sigma_{+}) = -n_{-}(D) + s_{+}(D) - 1.\]
Furthermore, the number of components of $\Sigma_{+}$ is lower than or equal to the number of components of $G(D)$.
\end{lemma}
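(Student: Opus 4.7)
I would follow the spirit of Kawamura's argument: perform oriented band moves at a carefully chosen subset of the negative crossings of $D$. Form the Seifert graph $\Gamma(D)$ and its positive subgraph $\Gamma_{+}(D)$ (obtained by deleting all negative-crossing edges); by definition $\Gamma_{+}(D)$ has $s_{+}(D)$ connected components $P_{1},\dots,P_{s_{+}(D)}$. Contracting each $P_{i}$ to a single super-vertex yields a multigraph $\widetilde{\Gamma}$ whose edges are exactly the negative crossings of $D$, and since $D$ is non-splittable, $\widetilde{\Gamma}$ is connected. The plan is to fix a spanning tree $T\subset\widetilde{\Gamma}$ (which has exactly $s_{+}(D)-1$ edges) and construct $\Sigma_{+}$ by attaching, at each negative crossing of $D$ whose corresponding edge in $\widetilde{\Gamma}$ lies outside $T$, a band realizing the oriented (Seifert-compatible) smoothing. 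This uses $n_{-}(D)-s_{+}(D)+1$ bands.

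The three numerical claims then reduce to bookkeeping. Every oriented band move preserves the Seifert circle count (the oriented resolution is precisely what defines Seifert circles), removes one crossing, and contributes $-1$ to the Euler characteristic starting from the cylinder $L\times[0,1]$. This yields $O(D_{+})=O(D)$, $n(D_{+})=n_{+}(D)+s_{+}(D)-1$, and $\chi(\Sigma_{+})=-n_{-}(D)+s_{+}(D)-1$. The remaining task is to show that $L_{+}$ is a positive link and to bound the number of connected components of $\Sigma_{+}$. For positivity, the only negative crossings left in $D_{+}$ are the $s_{+}(D)-1$ tree edges of $T$, arranged in a tree pattern joining the positive blocks; one processes them inductively by picking a leaf edge $e\in T$ joining a leaf block $B$ to its complement through a single negative crossing, and rotating $B$ by $180^{\circ}$ about an axis in $\mathbb{S}^{3}$ transverse to $e$. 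Since $B$ is attached to its complement only through $e$, this rotation flips $e$ to positive without disturbing any other crossing, and iterating along $T$ produces a positive diagram of $L_{+}$.

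For the component bound, I would show that whenever two distinct components $K,K'$ of $L$ share a negative crossing in $D$, the construction always attaches at least one band between them. Indeed, either the crossing is a self-loop of $\widetilde{\Gamma}$, lying within a single positive block, in which case it is automatically non-tree; or else the integrality of the linking number forces $K$ and $K'$ to share at least two inter-block negative crossings, at most one of which can lie in the tree. Hence the graph encoding the band-mergers of link components contains $G(D)$ as a subgraph, giving $|\pi_{0}(\Sigma_{+})|\leq|\pi_{0}(G(D))|$. The delicate step, and the main obstacle I expect, is the positivity of $L_{+}$: one must carefully confirm that the leaf-block rotation really flips only its target crossing, and that iterating this procedure along the tree always terminates with a genuine positive diagram. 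All the other claims reduce to combinatorial bookkeeping together with the parity of the linking number.
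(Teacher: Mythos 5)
Your construction is genuinely different from the one written in the paper. The paper performs a band move at \emph{every} negative crossing of $D$ (oriented resolution of all $n_{-}(D)$ of them), obtaining $s_{+}(D)$ positive diagrams, and then \emph{adds} $s_{+}(D)-1$ new positive crossings via further band moves along a spanning tree of the collapsed Seifert graph. You instead smooth only the $n_{-}(D)-s_{+}(D)+1$ negative crossings whose images in the collapsed graph lie outside a spanning tree $T$, and attempt to flip the remaining $s_{+}(D)-1$ tree crossings by an ambient isotopy, at no cost in Euler characteristic. It is worth noting that your count is the one that actually reproduces the Euler characteristic asserted by the lemma: $s_{+}(D)-1$ fewer one-handles gives $\chi(\Sigma_{+})=-(n_{-}(D)-s_{+}(D)+1)$, exactly the stated value, whereas the paper's described sequence of band moves would yield $\chi=-(n_{-}(D)+s_{+}(D)-1)$. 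So you are on the right track conceptually; the burden lies entirely on justifying the isotopy.

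That justification, as you state it, does not work. Rotating the block $B$ by $180^{\circ}$ about an axis in the diagram plane would flip the sign of every crossing inside $B$, not just the target crossing $e$; rotating about an axis perpendicular to the plane changes no crossing signs at all; and either rigid $180^{\circ}$ rotation reverses the orientation of the arc of $B$ relative to the exterior, which is not allowed for an oriented diagram. The move one actually wants is to lift $B$ off the plane, carry it past the other strand of $e$, and set it down again — and this requires first showing that after the non-tree smoothings the block $B$ together with its stub of strand can be isotoped into a ball disjoint from the rest of the diagram, i.e.\ that the tree crossing is nugatory. That separability is not automatic (the Seifert circles of $B$ may be nested among those of the complementary blocks), and this is precisely the nontrivial geometric content that needs to be supplied.

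Your argument for the bound $c(\Sigma_{+})\le c(G(D))$ also has a gap. Parity of the linking number says only that the \emph{total} number of crossings between $K$ and $K'$ is even; it does not force two negative inter-block crossings between them, since a positive crossing between $K$ and $K'$ already contributes to the parity without giving you a band. So a pair of components sharing exactly one negative crossing, and that crossing happening to be the unique $T$-edge joining two blocks, is not ruled out. In the paper's version this issue does not arise, because every negative crossing is smoothed and therefore carries a band, so two components sharing any negative crossing are merged inside $\Sigma_{+}$. If you insist on the flip construction, you need either a more careful choice of spanning tree or a different argument for the component bound.
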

\begin{proof}
Let us start from $D$, via band moves (cf. the first part of Figure \ref{figure:crossing_change}) we can eliminate all the negative crossings in $D$. With this procedure we end up with a collection of $s_{+}(D)$ link diagrams, say $D_{1}$, ..., $D_{s_{+}(D)}$. Moreover, we can see the Seifert graph of each $D_{i}$ as a sub-graph of $\Gamma(D)$. Consider the graph $\overline{\Gamma}$ obtained from $\Gamma(D)$ by collapsing each $\Gamma(D_i)$. Notice that each vertex of $\overline{\Gamma}$ corresponds to a $D_{i}$. Since $L$ is non-split, the diagram $D$ is connected (as a graph) and non-splittable. In particular, $\Gamma(D)$ (and thus $\overline{\Gamma}$) is connected.
Pick a spanning tree $T$ for $\overline{\Gamma}$. Via band moves we add a positive crossing (cf. the second part of Figure \ref{figure:crossing_change}) between $D_{i}$ and $D_{j}$ if the corresponding vertices in $\overline{\Gamma}$ are joined by an edge in $T$. Call $D_{+}$ and $\Sigma_{+}$ the diagram and the surface, respectively, obtained via the procedure just described.

The computation of the number of crossings and the number of Seifert circles of $D_+$, and the computation of the Euler characteristic of $\Sigma_{+}$ are easily done. Moreover, $\Sigma_{+}$ is a cobordism by construction.

All that is left is to count the number of connected component of $\Sigma_+$. Since $\Sigma_{+}$ is a cobordism each connected components touches a component of $L$. Moreover, if there is a negative crossing in $D$ between two components $L_1$ and $L_2$, then there is a band joining them. It follows that $L_1$ and $L_2$ belong to the same connected component of $\Sigma_+$.
\end{proof}
\begin{proof}[Proof of Theorem \ref{teorema:bound_combinatorio}]
Let us borrow the notation from the statement of Lemma \ref{lemma:5.5}. First, we wish to compute the genus of $\Sigma_{+}$. We may assume $L$ non-split since all the quantities in the statement are additive under disjoint union. From the general formula
\[\chi(\Sigma) = 2c(\Sigma) - 2 g(\Sigma) - c(\partial \Sigma),\]
where $c$ denotes the number of connected components, we obtain
\[ - g(\Sigma) = \frac{\chi(\Sigma) +c(\partial\Sigma)}{2} - c(\Sigma).\]
Denoted by $\ell_+$ the number of components of $L_+$, plugging in $\Sigma_{+}$ and replacing the corresponding quantities with their value we obtain
\[-g(\Sigma_{+}) = \frac{-n_{-}(D) + s_{+}(D) - 1 + \ell + \ell_{+}}{2} - c(\Sigma_{+}).\]
We should argue that we may assume $G(D)$ to be connected. This is easily done by replacing $D$ with a diagram $D^\prime$ such that:
\[w(D) = w(D^\prime),\quad O(D) = O(D^\prime),\quad s_{+}(D) = s_{+}(D^\prime),\]
and $c(G(D^\prime)) =1$. This can be obtained by choosing a positive crossing between each pair of components of $L$ which share only positive crossings, and perform a second Reidemeister as illustrated in Figure \ref{figura:trucco_seconda_mossa}. Since, $ c(\Sigma_{+})\leq c(G(D)) = 1$, we may assume $\Sigma_{+}$ to be connected.
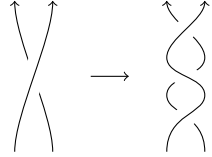
\begin{figure}[h]
\centering
\begin{tikzpicture}[scale = .5]

\draw[->] (1,-1) .. controls +(0,1) and +(0,-.5) .. (0,.5).. controls +(0,.5) and +(0,-.5) .. (1,1.5).. controls +(0,.5) and +(0,-.5) .. (0,3);
\pgfsetlinewidth{20*\pgflinewidth}
\draw[white] (0,-1) .. controls +(0,1) and +(0,-.5) .. (1,.5).. controls +(0,.5) and +(0,-.5) .. (0,1.5).. controls +(0,.5) and +(0,-.5) .. (1,3);
\pgfsetlinewidth{0.05*\pgflinewidth}
\draw[->] (0,-1) .. controls +(0,1) and +(0,-.5) .. (1,.5).. controls +(0,.5) and +(0,-.5) .. (0,1.5).. controls +(0,.5) and +(0,-.5) .. (1,3);

\draw[->] (-2,1) -- (-1,1);

\draw[->] (-3,-1)  .. controls +(0,1) and +(0,-1) .. (-4,3);
\pgfsetlinewidth{20*\pgflinewidth}
\draw[white] (-4,-1)  .. controls +(0,1) and +(0,-1) .. (-3,3);
\pgfsetlinewidth{0.05*\pgflinewidth}
\draw[->] (-4,-1) .. controls +(0,1) and +(0,-1) .. (-3,3);
\end{tikzpicture}
\caption{A second Reidemeister move near a positive crossing.}\label{figura:trucco_seconda_mossa}
\end{figure}

Now, consider the quantity
\[\nu(L_{+}) - g(\Sigma_+) -\ell_+ + 1 =\]
since $L_{+}$ is positive, from Theorem \ref{teorema:slice_genus_positive} it follows that
\[ = \frac{n_{+}(D) + s_{+}(D) - 1 - O(D) + \ell_+}{2} + \frac{-n_{-}(D) + s_{+}(D) - 1 + \ell + \ell_{+}}{2} - 1 - \ell_+ + 1 =\]
and simple computations show that
\[ = \frac{w(D) -O(D) + 2 s_{+}(D) + \ell - 2}{2}. \]
Finally, Proposition \ref{proposition:slice_genus_bound} tells us that
\[ \nu(L_{+}) - g(\Sigma_+) -\ell_+ + 1 \leq \nu(L),\]
and the result follows.

\end{proof}

The combinatorial bound presented in Theorem \ref{teorema:bound_combinatorio} is analogous to the bounds presented in \cite{Cavallo4, Kawamura} (see also \cite{Abe, Lewark14, Lobb11}) for the Rasmussen and Rasmussen-Beliakova-Wehrli invariants. A possible direction of work might be to find an analogue of the combinatorial bound presented in \cite{Collari}. Let us leave this matter aside for now, and 
let us turn to the last result of this section.
\begin{prop}\label{proposition:slice_genus_negative}
Let $L$ be a negative link, and let $\ell_s$ be the number of its split components. Then
\[\nu(L) = \frac{-n(D) + O(D) + \ell -2 \ell_{s}}{2},\]
for each negative diagram $D$ and each slice-torus link invariant $\nu$.
\end{prop}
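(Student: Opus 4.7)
The plan is to combine the combinatorial lower bound of Theorem \ref{teorema:bound_combinatorio} with the computation of $\nu$ on positive links (Theorem \ref{teorema:slice_genus_positive}) applied to the mirror, the latter being fed into Property (C).

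First I would reduce to the non-split case. In any negative diagram $D$ of $L$, two components $K_i$ and $K_j$ lying in different split components of $L$ cannot share crossings: since every crossing between them is negative, their linking number equals $-c(i,j)/2$, where $c(i,j)$ is the number of crossings between $K_{i}$ and $K_{j}$ in $D$; as components in distinct split sub-links have linking number zero, this forces $c(i,j)=0$. Consequently $D$ is a planar disjoint union
\[D \;=\; D^{(1)} \sqcup \cdots \sqcup D^{(\ell_s)},\]
where each $D^{(i)}$ is a connected negative diagram of the non-split sub-link $L^{(i)}$. Since $n$, $O$ and $\ell$ distribute additively over this decomposition, Property (B) of Definition \ref{defn:slice_torus_link_invariant} reduces the claim to proving
\[\nu(L) \;=\; \frac{-n(D) + O(D) + \ell - 2}{2}\]
in the case when $L$ is non-split and $D$ is a connected negative diagram.

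For the lower bound in this case, I would plug $D$ directly into Theorem \ref{teorema:bound_combinatorio}. Because all crossings of $D$ are negative, $w(D) = -n(D)$, and every edge of the Seifert graph $\Gamma(D)$ is negative, so deleting them leaves the $O(D)$ isolated vertices and $s_{+}(D) = O(D)$. Substituting and using $\ell_s = 1$ yields
\[\frac{-n(D)+O(D)+\ell-2}{2} \;\leq\; \nu(L).\]

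For the matching upper bound, I would pass to the mirror with reversed orientation $-L^{*}$. Mirroring flips every crossing sign, while reversing the orientation on both strands of a crossing leaves its sign unchanged; hence $-L^{*}$ is a non-split positive link admitting a positive diagram with the same number of crossings, Seifert circles and components as $D$. Theorem \ref{teorema:slice_genus_positive} then gives
\[\nu(-L^{*})\;=\;\frac{n(D)-O(D)+\ell}{2},\]
and Property (C) of Definition \ref{defn:slice_torus_link_invariant} yields
\[\nu(L) \;\leq\; (\ell-1) - \nu(-L^{*}) \;=\; \frac{-n(D)+O(D)+\ell-2}{2},\]
which matches the lower bound. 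The only step that requires real care is the preliminary decomposition of $D$ into its planar pieces and the verification that for a negative diagram these pieces realize the split decomposition of $L$ on the nose; once that is in place, the rest is a direct assembly of the two bounds already established in the paper.
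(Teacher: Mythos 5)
Your proof is correct, and while it shares with the paper the reduction to the non-split case and the use of Theorem~\ref{teorema:bound_combinatorio} for the lower bound (with $w(D)=-n(D)$ and $s_{+}(D)=O(D)$), it takes a genuinely different route to the upper bound. The paper's proof inducts on the number of components: each inductive step performs a negative fusion band move to drop to $\ell-1$ components, with the base case of negative knots handled via Corollary~\ref{corollary:restriction} together with Lewark's computation of slice-torus invariants of negative knots. You instead note that $-L^{*}$ is a positive link with a positive diagram having the same $n$, $O$, and $\ell$ as $D$, feed it into Theorem~\ref{teorema:slice_genus_positive}, and combine with the inequality $\nu(L)+\nu(-L^{*})\leq\ell-1$ from Property~(C). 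Your version is shorter, avoids both the induction and the appeal to Lewark's result, and also spells out the linking-number argument showing that a negative diagram planarly realizes the split decomposition of $L$ (a point the paper asserts without detail when it says any negative diagram is non-splittable). The trade-off is that you rely on the full, multi-component strength of Property~(C), whereas the paper's argument uses Property~(C) only through knots; accordingly, the paper's proof survives the relaxation to Property~$(C^\prime)$ mentioned after Definition~\ref{defn:slice_torus_link_invariant}, while your upper bound would degrade by $1$ under $(C^\prime)$.
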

\begin{proof}
Combining Corollary \ref{corollary:restriction} with \cite[Theorem 5]{Lewark14} (notice the different normalization, and see also \cite{Abe}), we obtain that our claim is true for negative knots.
Since the quantities $\nu$, $n$, $O$, $\ell$ and $\ell_{s}$ are additive with respect to the disjoint union, we may assume $L$ to be non-split ($\ell_s = 1$).
The proof goes by induction on the number of components of $L$.
Suppose the claim true for all $1  \leq \ell < r$, and assume $\ell = r$. Then by performing a band move, similar to the one in Figure \ref{figure:positive_fusion_moves}, between two components we can obtain a negative link $L^\prime$ which has $\ell-1$ components, is non-split and has a negative diagram with $n(D) + 1$ crossings and $O(D)$ Seifert circles. By Property (A) we have
\[ \nu(L) \leq \nu(L^\prime) + 1 = \frac{-n(D) - 1 + O(D) + \ell -1 -2}{2} + 1 = \frac{-n(D)+ O(D) + \ell -2}{2},\]
where the first equality is the inductive hypothesis. 
Since any negative diagram is non-splittable, the other inequality follows from Theorem \ref{teorema:bound_combinatorio}.
\end{proof}
This proposition allows us to prove the equivalent of Corollary \ref{corollary:slice_genus_pos_torus_links} for coherently oriented, negative torus links.
\begin{cor}
 Let $T^*$ be the mirror image of a torus link of type $T_{m,n}$ with all the components oriented in the same direction.
 Then for every slice-torus link invariant $\nu$ we have the equalities
 \[ \nu(T^*)=-g_3(T)=-g_4(T)=\dfrac{\ell-1-(n-1)(m-1)}{2}\:,\]
 where $\ell=\text{GCD}(m,n)$ is the number of components of $T$.
\end{cor}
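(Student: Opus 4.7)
The plan is to compute $\nu(T^*)$ via the combinatorial formula for negative links in Proposition \ref{proposition:slice_genus_negative}, and then match it against the expression for $g_3(T) = g_4(T)$ obtained from Corollary \ref{corollary:slice_genus_pos_torus_links}. Since $T^*$ is obtained by mirroring a coherently oriented torus link, the $3$- and $4$-dimensional genera satisfy $g_3(T^*) = g_3(T)$ and $g_4(T^*)=g_4(T)$, so it will be enough to work with $T$ on the topological side and with a chosen negative diagram of $T^*$ on the combinatorial side.

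First, I would take the standard braid presentation of $T_{m,n}$ as the closure of $(\sigma_1 \sigma_2 \cdots \sigma_{m-1})^n$. Mirroring produces a negative diagram $D$ of $T^*$ with $n(D) = n(m-1)$ positive-turned-negative crossings and $O(D)= m$ Seifert circles (the $m$ horizontal strands of the braid closure). Torus links are non-split, so $\ell_s = 1$. Plugging these data into Proposition \ref{proposition:slice_genus_negative} yields
\[ \nu(T^*) = \frac{-n(m-1) + m + \ell - 2}{2} = \frac{\ell - 1 - (n-1)(m-1)}{2}, \]
where the second equality is elementary algebra.

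Next, I would invoke Corollary \ref{corollary:slice_genus_pos_torus_links}, which gives
\[ g_3(T) + \ell - 1 \;=\; g_4(T) + \ell - 1 \;=\; \frac{(n-1)(m-1)+\ell-1}{2}, \]
so that
\[ g_3(T) \;=\; g_4(T) \;=\; \frac{(n-1)(m-1) - (\ell - 1)}{2}, \]
and hence $-g_3(T) = -g_4(T) = \frac{\ell -1 - (n-1)(m-1)}{2}$, which matches the value of $\nu(T^*)$ computed above. This establishes the four equalities in the statement.

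There is no real obstacle here: the corollary is a direct combination of Proposition \ref{proposition:slice_genus_negative} applied to the standard braid diagram of $T^*$ with Corollary \ref{corollary:slice_genus_pos_torus_links} for the positive counterpart; the only care needed is the correct count of crossings and Seifert circles for the chosen diagram and the symmetry of the slice and Seifert genus under mirroring.
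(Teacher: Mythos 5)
Your proof is correct, and it rests on the same two ingredients as the paper's argument, namely Proposition \ref{proposition:slice_genus_negative} and Corollary \ref{corollary:slice_genus_pos_torus_links}. The only difference is stylistic: instead of plugging the explicit braid diagram of $T^*$ into the negative-link formula, the paper first combines Proposition \ref{proposition:slice_genus_negative} with Theorem \ref{teorema:slice_genus_positive} to extract the diagram-free identity $\nu(L^*) = \ell - 1 - \nu(L)$ for any non-split positive link $L$, and then evaluates it at $T$ via the corollary; your crossing and Seifert-circle count for the closure of $(\sigma_1\cdots\sigma_{m-1})^n$ is a direct verification of the same arithmetic.
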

\begin{proof}
 Proposition \ref{proposition:slice_genus_negative} says that \[\nu(L^*)=\ell-1-\nu(L)\] if $L$ is a non-split positive link. Then the claim follows from this observation, Corollary \ref{corollary:slice_genus_pos_torus_links} and the fact that $g_{4}(L) = g_{4}(L^{*})$ and $g_{3}(L) = g_{3}(L^{*})$.
\end{proof}

\section{Applications}

This section is dedicated to two applications. The first is an application of the combinatorial bound, and consist of the computation of the slice-torus link invariants of quasi-positive links. The second application is a lower bound on the splitting number of links.

\subsection{Quasi-positive links}
Let us recall the definition of quasi-positive braid and quasi-positive link.
\begin{defin}
 A quasi-positive link is any link which can be realized as the closure 
 of a $d$-braid of the
 form \[\prod_{i=1}^bw_i\sigma_{j_i}w_i^{-1}\:,\] where $\sigma_j$ for $j=1,...,d-1$ are the Artin generators of the $d$-braids group.
\end{defin} 
Thus quasi-positive links are closures of braids consisting of arbitrary conjugates of positive (Artin) generators.

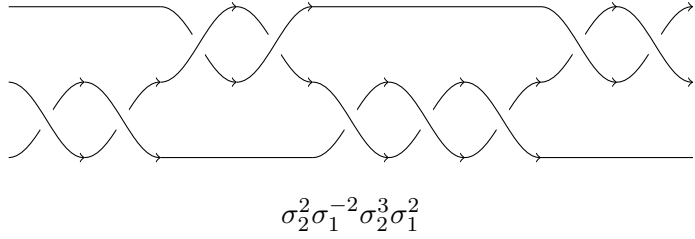
\begin{figure}[h]
\centering
\begin{tikzpicture}[scale = .5]

\draw[<-] (3.5,2) .. controls +(-.75,0) and +(.75,0) .. (1.5,0);
\pgfsetlinewidth{20*\pgflinewidth}
\draw[white] (1.5,2) .. controls +(.75,0) and +(-.75,0) .. (3.5,0);
\pgfsetlinewidth{.05*\pgflinewidth}
\draw[->] (1.5,2) .. controls +(.75,0) and +(-.75,0) .. (3.5,0);
\begin{scope}[shift = {+(2,0)}]
\draw[<-] (3.5,2) .. controls +(-.75,0) and +(.75,0) .. (1.5,0);
\pgfsetlinewidth{20*\pgflinewidth}
\draw[white] (1.5,2) .. controls +(.75,0) and +(-.75,0) .. (3.5,0);
\pgfsetlinewidth{.05*\pgflinewidth}
\draw[->] (1.5,2) .. controls +(.75,0) and +(-.75,0) .. (3.5,0);
\end{scope}

\begin{scope}[shift = {+(4,2)}]
\draw[->] (1.5,2) .. controls +(.75,0) and +(-.75,0) .. (3.5,0);
\pgfsetlinewidth{20*\pgflinewidth}
\draw[white] (3.5,2) .. controls +(-.75,0) and +(.75,0) .. (1.5,0);
\pgfsetlinewidth{.05*\pgflinewidth}
\draw[<-] (3.5,2) .. controls +(-.75,0) and +(.75,0) .. (1.5,0);
\end{scope}
\begin{scope}[shift = {+(6,2)}]
\draw[->] (1.5,2) .. controls +(.75,0) and +(-.75,0) .. (3.5,0);
\pgfsetlinewidth{20*\pgflinewidth}
\draw[white] (3.5,2) .. controls +(-.75,0) and +(.75,0) .. (1.5,0);
\pgfsetlinewidth{.05*\pgflinewidth}
\draw[<-] (3.5,2) .. controls +(-.75,0) and +(.75,0) .. (1.5,0);
\end{scope}
\begin{scope}[shift = {+(12,0)}]
\draw[<-] (3.5,2) .. controls +(-.75,0) and +(.75,0) .. (1.5,0);
\pgfsetlinewidth{20*\pgflinewidth}
\draw[white] (1.5,2) .. controls +(.75,0) and +(-.75,0) .. (3.5,0);
\pgfsetlinewidth{.05*\pgflinewidth}
\draw[->] (1.5,2) .. controls +(.75,0) and +(-.75,0) .. (3.5,0);
\end{scope}
\begin{scope}[shift = {+(8,0)}]
\draw[<-] (3.5,2) .. controls +(-.75,0) and +(.75,0) .. (1.5,0);
\pgfsetlinewidth{20*\pgflinewidth}
\draw[white] (1.5,2) .. controls +(.75,0) and +(-.75,0) .. (3.5,0);
\pgfsetlinewidth{.05*\pgflinewidth}
\draw[->] (1.5,2) .. controls +(.75,0) and +(-.75,0) .. (3.5,0);
\end{scope}
\begin{scope}[shift = {+(10,0)}]
\draw[<-] (3.5,2) .. controls +(-.75,0) and +(.75,0) .. (1.5,0);
\pgfsetlinewidth{20*\pgflinewidth}
\draw[white] (1.5,2) .. controls +(.75,0) and +(-.75,0) .. (3.5,0);
\pgfsetlinewidth{.05*\pgflinewidth}
\draw[->] (1.5,2) .. controls +(.75,0) and +(-.75,0) .. (3.5,0);
\end{scope}
\begin{scope}[shift = {+(16,2)}]
\draw[<-] (3.5,2) .. controls +(-.75,0) and +(.75,0) .. (1.5,0);
\pgfsetlinewidth{20*\pgflinewidth}
\draw[white] (1.5,2) .. controls +(.75,0) and +(-.75,0) .. (3.5,0);
\pgfsetlinewidth{.05*\pgflinewidth}
\draw[->] (1.5,2) .. controls +(.75,0) and +(-.75,0) .. (3.5,0);
\end{scope}
\begin{scope}[shift = {+(14,2)}]
\draw[<-] (3.5,2) .. controls +(-.75,0) and +(.75,0) .. (1.5,0);
\pgfsetlinewidth{20*\pgflinewidth}
\draw[white] (1.5,2) .. controls +(.75,0) and +(-.75,0) .. (3.5,0);
\pgfsetlinewidth{.05*\pgflinewidth}
\draw[->] (1.5,2) .. controls +(.75,0) and +(-.75,0) .. (3.5,0);
\end{scope}
\draw (5.5,4) -- (1.5,4);
\draw (5.5,0) -- (9.5,0);
\draw (9.5,4) -- (15.5,4);
\draw (15.5,0) -- (19.5,0);
\node at (10.5,-1.5) {$\sigma_{2}^{2}\sigma_{1}^{-2}\sigma_{2}^{3}\sigma_{1}^{2}$};
\end{tikzpicture}
\caption{A quasi-positive 3-braid and its ``geometrical'' representation.}
\end{figure}

\begin{teo}\label{teo:quasi-pos}
Consider the $d$-braid $B=(w_1\sigma_{j_1}w_1^{-1})\cdot...\cdot(w_b\sigma_{j_b}w_b^{-1})$, and denote by $L$ its closure.
Then, for every slice-torus link invariant $\nu$ we have the equality
 \[\nu(L)=\dfrac{b-d+\ell}{2}\:,\]
 where $\ell$ is the number of components of $L$.
\end{teo}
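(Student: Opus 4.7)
The strategy is to sandwich $\nu(L)$ between matching upper and lower bounds equal to $(b-d+\ell)/2$.

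For the upper bound, I consider the quasi-positive surface $\Sigma$ built from $d$ disks (one per strand of $B$) and $b$ bands (one per factor $w_i\sigma_{j_i}w_i^{-1}$), so that $\chi(\Sigma)=d-b$ and $\partial\Sigma=L$. Letting $c$ denote the number of connected components of $\Sigma$, we have $g(\Sigma)=(2c-\ell-d+b)/2$. Pushing $\Sigma$ into $\mathbb{D}^{4}$ and removing a small disk from each connected component produces a cobordism in $\mathbb{S}^{3}\times[0,1]$ with $c$ connected components joining the $c$-component unlink to $L$; Proposition~\ref{proposition:slice_genus_bound} then yields
\[\nu(L)\leq g(\Sigma)+\ell-c=\frac{b-d+\ell}{2}.\]

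For the lower bound, I apply Theorem~\ref{teorema:bound_combinatorio} to the standard braid diagram $D$ of $B$. Directly from the word one reads $w(D)=b$ (each $w_i$ has writhe opposite to $w_i^{-1}$) and $O(D)=d$. The crucial observation is that every crossing position $k$ of $D$ hosts at least one positive crossing: if $k=j_i$ the middle generator is positive, while if $\sigma_k^{\epsilon}$ occurs in some $w_i$ then $\sigma_k^{-\epsilon}$ occurs in $w_i^{-1}$, so one of the two is $\sigma_k^{+}$. Consequently the positive-edge Seifert subgraph and the full Seifert graph share the same connected components, and $s_+(D)$ equals the number of connected components of $D$. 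When $L$ is non-split, $D$ is non-splittable with $s_+(D)=\ell_s=1$, and the combinatorial bound reduces to
\[\nu(L)\geq\frac{w(D)-O(D)+2s_+(D)+\ell-2\ell_s}{2}=\frac{b-d+\ell}{2},\]
matching the upper bound.

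To handle the general (possibly split) case I decompose $L=L_1\sqcup\cdots\sqcup L_{\ell_s}$ and use Property~(B) to write $\nu(L)=\sum_i\nu(L_i)$. The connected components of $\Sigma$ distribute among the split balls of $L$ and endow each $L_i$ with its own quasi-positive presentation of parameters $(b_i,d_i,\ell_i)$; Euler-characteristic additivity then forces $\sum_i(b_i-d_i)=b-d$ while $\sum_i\ell_i=\ell$, so summing the non-split formula across the pieces recovers $(b-d+\ell)/2$. The principal obstacle will be precisely this split reduction: one must justify that each split component inherits a quasi-positive structure whose parameters combine compatibly with those of the original $B$, either via the Euler-characteristic decomposition (tacitly relying on the surface-realization result for quasi-positive links) or via an elementary Markov-move argument showing that $(b-d+\ell)/2$ is preserved by positive stabilization and conjugation and that split quasi-positive links admit disjoint-product quasi-positive presentations.
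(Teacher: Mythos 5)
Your proof follows essentially the same route as the paper's: the upper bound from the Rudolph quasi-positive surface of Euler characteristic $d-b$ via Proposition~\ref{proposition:slice_genus_bound}, the lower bound from the combinatorial estimate of Theorem~\ref{teorema:bound_combinatorio}, and a reduction to the non-split case. Two small remarks: your verification that $s_+(D)=1$ is correct but not needed, since $s_+(D)\geq 1$ already forces the matching lower bound $(b-d+\ell)/2\leq\nu(L)$; and your caution about the split reduction is well-placed, as the parameters $(b,d)$ of a given quasi-positive braid for a split $L$ need not decompose termwise among the split pieces --- the paper disposes of this in one sentence (additivity), which tacitly uses that $b-d=-\chi(\Sigma_B)$ is an invariant of the quasi-positive link.
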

\begin{proof}
Since all quantities involved in the statement are additive under disjoint union, we may assume $L$ to be non-split. First, we wish to prove the inequality
\[\nu(L)\leq\dfrac{\ell-\chi(\Sigma)}{2}\:,\]
where $\Sigma$ is a compact oriented surface, properly embedded in $D^4$, such that $\partial\Sigma=L$.
Assume $\Sigma$ has $k$ connected components. Then Proposition \ref{proposition:slice_genus_bound} tells us that 
\[\nu(L)\leq g(\Sigma)+\ell-k=\dfrac{\ell-\chi(\Sigma)}{2}\:.\]

Since $L$ bounds a surface $\Sigma_B$ which satisfies the previous properties and is such that $\chi(\Sigma_B)=d-b$, as it is shown in \cite{Rudolph3}, we obtain that
\[\nu(L)\leq\dfrac{b-d+\ell}{2}\:.\]
 
The other inequality follows from the bound in Equation \eqref{equation:bound_combiantorio}. In fact, this gives that
\[\dfrac{b-d+2s_+(B)+\ell-2}{2}=\dfrac{w(B)-O(B)+2s_+(B)+\ell-2}{2}\leq\nu(L).\] 
Since $1 \leq s_{+}(B)$ the statement follows.
\end{proof}

\subsection{Splitting number}
As we anticipated, the slice-torus link invariants can be used to obtain a lower bound for the splitting number $\widetilde{\spl}$ of a link (\cite{Adams}), which is sometimes called weak splitting number (\cite{FP}). Let us recall its definition first.
\begin{defin}
The \emph{splitting number} $\widetilde{\spl}(L)$ of a link $L$ is defined as the minimum number of crossing changes to perform on a diagram (for all possible diagrams) of $L$ in order to turn the link into a disjoint union of knots.
\end{defin}
Note that in literature the symbol $\spl(L)$ usually denotes a different version of the splitting number of $L$, which we called the strong splitting number in the introduction. The \emph{strong splitting number} is defined exactly as $\widetilde{\spl}$ but the only crossing changes allowed are those between different components. In particular, we have that $\widetilde{\spl}(L)\leq\spl(L)$.

\begin{remark}\label{remark:linkingandsplitting}
For each oriented link $L$, we have
\[ \spl(L) \equiv \sum_{1\leq i<j \leq \ell} lk(L_{i},L_{j})\quad \mod (2),\]
where $L_{1}$,...,$L_{\ell}$ denote the components of $L$. This fact can be easily proved by induction, alternatively the reader can consult \cite[Lemma 2.1]{Friedletal}.
\end{remark}

\begin{proof}[Proof of Theorem \ref{theorem:splitting}]
If $\widetilde{\spl}(L)=0$, then $L$ is a disjoint union of knots. The additivity of $\nu$ (Property (B)) tells us that in this case the left hand side of Equation \eqref{equation:bound_on_splitting} is also zero. Thus, the (in)equality holds. We claim that the quantity
\[\left|\nu(L)-\sum_{i=1}^{\ell}\nu(K_i)\right|\]
increases at most by $1$ at each crossing change. The result is proved by induction on the value of $\widetilde{\spl}(L)$ as follows; consider a minimal sequence of crossing changes from $L$ to a split union of knots. Denote by $L^\prime$ the first step in this sequence, then
\[\left|\nu(L)-\sum_{i=1}^{\ell}\nu(K_i)\right|  \leq  \left|\nu(L^\prime)-\sum_{i=1}^{\ell}\nu(K^\prime_i)\right| + 1\leq \widetilde{\spl}(L^\prime)  + 1= \widetilde{\spl}(L) , \]
where the first inequality is our claim, and the second inequality follows from the inductive hypothesis.

Now, let us prove our claim. First, assume the crossing change to happen between different components. In particular, none of the $K_i$'s is modified under this crossing change, while $\nu(L)$ can either increase or decrease at most by $1$ (cf. Proposition \ref{proposition:crossing_change}).
Now, assume the crossing change to be performed on a component of $L$, say $K = K_{i}$ for some $i$. This crossing change modifies both $L$ and $K$, but leaves all the other components unchanged.
Again from Proposition \ref{proposition:crossing_change} it follows that
 \begin{equation}
 \label{equation:cross_chg1}
 \nu(L_+)-1\leq\nu(L_-)\leq\nu(L_+)\:,    
 \end{equation}
 and that
 \begin{equation}
  \label{equation:cross_chg2}
  -\nu(K_{+})\leq-\nu(K_{-})\leq-\nu(K_{+})+1\:,
 \end{equation}
 where the plus and minus denote the signs of the crossing, before and after the change.
Adding Equation \eqref{equation:cross_chg1} and Equation \eqref{equation:cross_chg2}, we obtain
 \[\nu(L_+)-\nu(K_{+})-1\leq\nu(L_-)-\nu(K_{-})\leq\nu(L_+)-\nu(K_{+})+1.\]
Since either $L=L_+$ and $K=K_+$, or $L=L_-$ and $K=K_-$, and all the other components of $L$ are left unchanged, the claim follows.
\end{proof}

\begin{cor}
Let $p$ and $q$ be coprime integers, and $k > 0$. Then, the following equality holds
\[ \spl (T_{kp,kq}) = \widetilde{\spl}(T_{kp,kq}) = \frac{ k (  k  - 1)\vert pq \vert}{2}\]
\end{cor}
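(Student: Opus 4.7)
The plan is to sandwich both splitting numbers between the same value $k(k-1)|pq|/2$, using Theorem \ref{theorem:splitting} for the lower bound on $\widetilde{\spl}$ and an explicit construction for the upper bound on $\spl$, then exploiting $\widetilde{\spl}\leq \spl$. First I would reduce to the case $p,q>0$ by observing that both splitting numbers are invariant under mirroring, and that $T_{kp,kq}$ with one of $p,q$ negative is the mirror of $T_{k|p|,k|q|}$.

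For the lower bound, pick any slice-torus link invariant $\nu$. Each of the $k$ components of the coherently oriented torus link $T_{kp,kq}$ is a copy of $T_{p,q}$, so by Property (D) one has $\sum_{i=1}^{k}\nu(K_i)=k(p-1)(q-1)/2$, while Corollary \ref{corollary:slice_genus_pos_torus_links} gives
\[\nu(T_{kp,kq})=\frac{(kp-1)(kq-1)+k-1}{2}.\]
Expanding both products, the cross-terms $-kp-kq$ cancel and a short calculation yields
\[\nu(T_{kp,kq})-\sum_{i=1}^k\nu(K_i)=\frac{k(k-1)pq}{2}.\]
Theorem \ref{theorem:splitting} then gives $\widetilde{\spl}(T_{kp,kq})\geq k(k-1)pq/2$.

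For the upper bound, I would view $T_{kp,kq}$ as $k$ coherently oriented parallel copies of $T_{p,q}$ sitting on the standard unknotted torus in $\mathbb{S}^3$, so that any two components have linking number exactly $pq$. In the standard torus diagram all crossings between a fixed pair of such parallel copies have the same sign, and changing the $pq$ crossings between that pair (while leaving the other pairs alone) pushes the two components off each other. Performing this for each of the $\binom{k}{2}$ unordered pairs produces a genuinely split link whose components are $k$ disjoint copies of $T_{p,q}$, and the total number of crossing changes used is $\binom{k}{2}\,pq=k(k-1)pq/2$. Hence $\spl(T_{kp,kq})\leq k(k-1)pq/2$.

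Combining with the trivial inequality $\widetilde{\spl}\leq \spl$ gives the desired chain of equalities. The main obstacle is the upper bound step: one needs to argue carefully that after changing exactly the $\binom{k}{2}pq$ inter-component crossings coming from the cable structure, the resulting diagram truly represents a split link and not merely one with vanishing pairwise linking numbers — this relies on the specific geometry of parallel torus curves rather than a general statement about linking numbers.
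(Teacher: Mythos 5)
Your proof is correct and reaches the same conclusion, but by a genuinely more self-contained route than the paper's. The paper's proof imports both halves of the argument from \cite[Corollary 3]{Jeong}: that reference already establishes both $\spl(T_{kp,kq}) = k(k-1)pq/2$ and the equality $\left|\nu_{s_n}(T)-\sum_i\nu_{s_n}(K_i)\right|=k(k-1)pq/2$, so the paper only needs to observe (via Corollary \ref{corollary:slice_genus_pos_torus_links}) that the latter computation is invariant-independent, and then squeeze $\widetilde{\spl}$ between the two using $\left|\nu(L)-\sum_i\nu(K_i)\right|\leq\widetilde{\spl}(L)\leq\spl(L)$. You instead re-derive the lower bound from scratch, combining Property (D), the formula from Corollary \ref{corollary:slice_genus_pos_torus_links}, and Theorem \ref{theorem:splitting}; your expansion
\[
\frac{(kp-1)(kq-1)+k-1}{2}-\frac{k(p-1)(q-1)}{2}=\frac{k(k-1)pq}{2}
\]
is correct. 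You also re-prove the upper bound on $\spl$ by an explicit splitting sequence rather than citing Jeong. You are right to flag the delicate point there: between a fixed pair of parallel $(p,q)$-cables all inter-pair crossings have the same sign, so the signed crossing count is $2pq$, not $pq$, and one must choose a specific set of $pq$ crossings (e.g.\ a consecutive block in the standard braid word) whose reversal produces a genuine isotopy pushing one cable off the other — vanishing linking alone would not suffice. The net trade-off is that your argument avoids the external citation at the cost of this geometric verification; both are valid, and the paper's version is shorter precisely because Jeong already did that work.
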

\begin{proof}
 In \cite[Corollary 3]{Jeong}, the author proves that
\begin{equation}
\label{eq:splfortorus}
\left|\nu_{s_n}(T)-\sum_{i=1}^{k}\nu_{s_{n}}(K_i)\right| = \spl (T_{kp,kq}) = \frac{ k (  k  - 1) pq }{2}.
\end{equation}
where $p~,q > 0$, $T$ is the positive torus link of type $T_{kp,kq}$, and $K_1$,..., $K_{k}$ are the components of $T$.
Notice that by Corollary~\ref{corollary:slice_genus_pos_torus_links} the value of any slice-torus link invariant on positive torus links does not depend on the chosen invariant, therefore Jeong's computation holds for any slice-torus link invariant. From Equation \eqref{eq:splfortorus} and from
\begin{equation}
 \label{equation:J}
 \left|\nu(L)-\sum_{i=1}^{\ell}\nu(K_i)\right|\leq \widetilde{\spl}(L)\leq\spl(L),   
\end{equation}
the desired equality follows for $p~,q > 0$. In the other cases, at most we recover the mirror image of $T_{k\vert p\vert ,k\vert q\vert }$, and since the splitting number of a link and its mirror is the same, the corollary follows.
\end{proof}

\begin{remark}
Notice that $\widetilde{\spl}(L)$ does not depend on the orientation of $L$. It follows that the inequality in Theorem \ref{theorem:splitting} holds for every relative orientation of the link. Thus, the maximum among all these values is still a lower bound for $\widetilde{\spl}(L)$.
\end{remark}

In \cite{Adams,FP} some lower bounds for $\widetilde{\spl}$ are also given. In this paper we describe an infinite family of 2-components links for which Theorem \ref{theorem:splitting} allows us to compute $\widetilde{\spl}$, where all the obstructions in \cite{Adams,FP} fail.

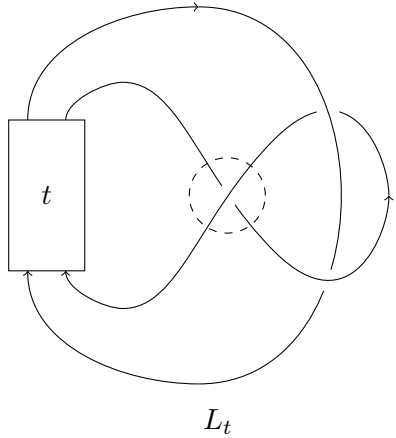
\begin{figure}[h]
\centering
\begin{tikzpicture}[scale =.5]

\draw[->] (-.5,2) .. controls +(0,2) and +(-2,0) .. (4,5);
\draw[<-] (-.5,-2) .. controls +(0,-2) and +(-2,0) .. (4,-5);

\draw[] (.5,2)  .. controls +(0,.5) and +(-.5,0) .. (2,3) .. controls +(1.5,0) and +(-2,2) .. (6,-1.5);
\pgfsetlinewidth{20*\pgflinewidth}
\draw[white] (.5,-2) .. controls +(0,-.5) and +(-.5,0) .. (2,-3) .. controls +(1.5,0) and +(-2,-2) .. (6,1.5);
\pgfsetlinewidth{0.05*\pgflinewidth}
\draw[<-] (.5,-2) .. controls +(0,-.5) and +(-.5,0) .. (2,-3) .. controls +(1.5,0) and +(-2,-2) .. (6,1.5);

\draw[] (9,0)  .. controls +(0,1) and +(2,2) .. (6,1.5);
\pgfsetlinewidth{20*\pgflinewidth}
\draw[white] (4,-5) .. controls +(5,0) and +(5,0) .. (4,5);
\pgfsetlinewidth{0.05*\pgflinewidth}
\draw[] (4,-5) .. controls +(5,0) and +(5,0) .. (4,5);

\pgfsetlinewidth{20*\pgflinewidth}
\draw[white] (9,0) .. controls +(0,-1) and +(2,-2) .. (6,-1.5);
\pgfsetlinewidth{0.05*\pgflinewidth}
\draw[<-] (9,0) .. controls +(0,-1) and +(2,-2) .. (6,-1.5);

\draw (-1,2) rectangle (1,-2);
\node at (0,0) {$t$};
\draw[dashed] (4.75,0) circle (1);
\node at (4.5,-6) {$L_t$};

\end{tikzpicture}
\caption{A diagram for the 2-component link $L_t$, where $t\in \mathbb{N}\setminus\{0\}$ denotes the number of positive full twists.}\label{figure:L_t}
\end{figure}

\begin{prop}
 \label{proposition:splitting}
 Let us consider the links $L_t$ in Figure \ref{figure:L_t}. Then, we have that $\widetilde{\spl}(L_t)=t$ and $\spl(L_{t}) = t+1$ for every $t\geq3$.
\end{prop}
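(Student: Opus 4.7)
The plan is to establish the two equalities separately: the computation of $\widetilde{\spl}(L_t)$ will use Theorem~\ref{theorem:splitting} for the lower bound, while for $\spl(L_t)$ we will combine this with the parity constraint of Remark~\ref{remark:linkingandsplitting}. Both matching upper bounds will come from exhibiting explicit sequences of crossing changes.

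First I would compute $\nu(L_t)$ together with the values of $\nu$ on the two components $K_1$ and $K_2$ of $L_t$. A direct inspection of Figure~\ref{figure:L_t} shows that each of $K_1, K_2$ is unknotted, so $\nu(K_1)=\nu(K_2)=0$ (by Corollary~\ref{corollary:restriction} together with Property~(D) applied to $T_{1,1}$). For $\nu(L_t)$ itself, the combinatorial bound of Theorem~\ref{teorema:bound_combinatorio} applied to the displayed diagram will yield $\nu(L_t)\geq t$; the matching upper bound comes from exhibiting an explicit connected genus-zero cobordism from $L_t$ to a link whose $\nu$ we can evaluate directly (for instance, a positive torus link treated via Theorem~\ref{teorema:slice_genus_positive}) and invoking Proposition~\ref{proposition:slice_genus_bound}. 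Combining these, Theorem~\ref{theorem:splitting} yields
\[\widetilde{\spl}(L_t)\geq\left|\nu(L_t)-\nu(K_1)-\nu(K_2)\right|=t.\]
For the reverse inequality I would describe an explicit sequence of $t$ crossing changes turning $L_t$ into a split link: the key idea is to change the self-crossing highlighted by the dashed circle in Figure~\ref{figure:L_t} (a move that is legal for $\widetilde{\spl}$ since it involves a single component) together with $t-1$ carefully chosen inter-component crossings in the twist box; a sequence of Reidemeister moves then reduces the resulting diagram to an unlink.

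The strong splitting number is treated similarly. The inequality $\spl(L_t)\geq \widetilde{\spl}(L_t)=t$ is immediate. A direct computation of $\mathrm{lk}(K_1, K_2)$ from Figure~\ref{figure:L_t} gives a value whose parity is opposite to that of $t$, so Remark~\ref{remark:linkingandsplitting} upgrades the bound to $\spl(L_t)\geq t+1$. For the matching upper bound I would exhibit a sequence of $t+1$ \emph{inter-component} crossing changes (one more than in the previous argument, since the self-crossing move is no longer allowed) that splits $L_t$ after Reidemeister moves. The main obstacle is to verify that the proposed sequences of crossing changes actually produce split diagrams and to set up the combinatorial bound so that it is sharp; the role of the self-crossing enclosed in the dashed circle is precisely what distinguishes $\widetilde{\spl}(L_t)$ from $\spl(L_t)$ and accounts for the gap of exactly one between the two values.
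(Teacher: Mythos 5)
Your proof has the same overall skeleton as the paper's: lower bound on $\widetilde{\spl}$ via Theorem~\ref{theorem:splitting}, matching upper bound by exhibiting $t$ explicit crossing changes (one being the highlighted self-crossing), then the parity argument of Remark~\ref{remark:linkingandsplitting} to upgrade $\spl(L_t)\geq t$ to $\spl(L_t)\geq t+1$, and finally $t+1$ inter-component crossing changes for the upper bound on $\spl$. That part matches the paper step for step.

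Where you genuinely diverge is in the computation of $\nu(L_t)$. The paper does not use the combinatorial bound (Theorem~\ref{teorema:bound_combinatorio}) at all here; instead it specializes to the $\tau$-invariant and exploits the fact that $L_t$ is a non-split alternating link, for which $\tau$ is completely determined by the signature: $\tau(L_t)=\frac{\ell-1-\sigma(L_t)}{2}$. A short signature computation gives $\sigma(L_t)=1-2t$, hence $\tau(L_t)=t$, with no diagram-specific cobordism needed. Your route via Theorem~\ref{teorema:bound_combinatorio} plus a genus-zero cobordism to a positive torus link is in principle available to any slice-torus link invariant, which is a potential advantage, but it is also where your argument is thinnest: you assert that the bound "will yield $\nu(L_t)\geq t$" without computing $w(D)$, $O(D)$, and $s_+(D)$ for the diagram in Figure~\ref{figure:L_t}, and the diagram has negative crossings (it is alternating), so sharpness of the combinatorial bound is not automatic and needs to be checked. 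The upper bound on $\nu(L_t)$ you sketch is actually unnecessary for the splitting-number conclusion (only $\nu(L_t)\geq t$ is used, since $\nu(K_1)=\nu(K_2)=0$), but if you do want it, the proposed cobordism to a torus link is left entirely unspecified. In short: same architecture, but where the paper hands itself a free computation by choosing $\tau$ on an alternating link, you substitute a plausible but unverified appeal to the combinatorial bound; to be complete you would need to carry out that count on the diagram explicitly.
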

\begin{proof}
 We use the link version of the $\tau$-invariant, see \cite{Cavallo}. Since $L_t$ is non-split alternating for every
 $t\geq1$, we have that $\tau(L_t)$ is determined by the signature:
\[\tau(L_t)=\dfrac{\ell-1-\sigma(L_t)}{2}=\dfrac{1-\sigma(L_t)}{2}\:;\] where $\ell$ is the number of components of the link, which is always equal to two in this case.

An easy computation gives that $\sigma(L_t)=1-2t$, and thus $\tau(L_t)=t$, for every $t\geq1$. The link $L_t$ has unknotted components, hence Theorem \ref{theorem:splitting} implies $t\leq\widetilde{\spl}(L_t)$. On the other hand, we immediately see that $L_t$ can be unlinked (and thus reduced to the split union of knots) by changing $t$ crossings: one crossing for each full twist except one, plus the crossing circled in Figure \ref{figure:L_t}. Thus, we proved that $\widetilde{\spl}(L_{t}) = t$.

It follows that
\[ \spl(L_{t})\geq \widetilde{\spl}(L_{t}) = t.\]
Moreover, a simple computation shows that 
\[\vert lk(K_{t}, K_{t}^{\prime}) \vert = t-1,\]
where $K_{t}$ and $K_t^\prime$ are the components of $L_{t}$. Therefore, by Remark \ref{remark:linkingandsplitting}, we have that $\spl(L_{t})$ is at least $t+1$. Finally, a direct inspection of the diagram in Figure \ref{figure:L_t} tells us that $t+1 \geq \spl(L_{t})$, and the statement follows.
\end{proof}

\section{Whitehead doubles and a concordance invariant for links}

In this section we define some link invariants, related to Livingston and Naik's invariant $t_{\nu}$, and study some of their properties. We start by defining the fully clasped and the reduced Whitehead doubles. Then, we define the functions $F_{\nu},\: F^\prime_{\nu},\: \overline{F}_{\nu}$ and $\overline{F}_{\nu}$ and prove their basic properties. Finally, we prove an obstruction for a link to be concordant to a split link.
We recall that each strong concordance defines a bijection between the components of the two links, identifying them. Throughout this section all links are oriented.

\subsection{Whitehead doubles of links}

Unlike the case of knots, the Whitehead double of links is not uniquely defined. 
\begin{figure}[H]
\centering
\begin{tikzpicture}[scale = .75]
\draw[red] (.6,0) arc (0:180:.6 and .55);
\draw[red, dashed] (.6,0) arc (0:-180:.6 and .55);
\draw[thick] (0,0) circle (2 and 2.1);
\draw[thick] (.6,.1) arc (0:-180:.6 and .5);
\draw[thick] (.55,-.1) arc (0:180:.55);

\draw (-.25,1.225)  .. controls +(0,.05) and +(-.5,0) .. (.25,1.625);
\draw  (-.25,.825) .. controls +(.5,0) and +(0,-.05) .. (.25,1.225) ;
\pgfsetlinewidth{10*\pgflinewidth}
\draw[white]  (.25,1.225)  .. controls +(0,.05) and +(.5,0) .. (-.25,1.625);
\draw[white]  (.25,.825) .. controls +(-.5,0) and +(0,-.05) .. (-.25,1.225);
\pgfsetlinewidth{.1*\pgflinewidth}
\draw (.25,1.225)  .. controls +(0,.05) and +(.5,0) .. (-.25,1.625);
\draw (.25,.825) .. controls +(-.5,0) and +(0,-.05) .. (-.25,1.225);

\draw[->] (.25,.825) -- (.5,.825)  .. controls +(.75,0) and +(.75,0) .. (.75,-.825) ;
\draw[<-] (.25,1.625) -- (.5,1.625)  .. controls +(1.5,0) and +(1.25,0) .. (.75,-1.625) ;
\draw[<-] (-.25,.825) -- (-.5,.825)  .. controls +(-.75,0) and +(-.75,0) .. (-.75,-.825) ;
\draw[->] (-.25,1.625) -- (-.5,1.625)  .. controls +(-1.5,0) and +(-1.25,0) .. (-.75,-1.625) ;
\draw (-.75,-.75) rectangle (.75,-1.75);
\node at (0,-1.25) {$t$};

\node at (0,-3) {$W^{+}_{t}$};

\begin{scope}[shift = {+(6,0)}]
\draw[red] (.6,0) arc (0:180:.6 and .55);
\draw[red, dashed] (.6,0) arc (0:-180:.6 and .55);
\draw[thick] (0,0) circle (2 and 2.1);
\draw[thick] (.6,.1) arc (0:-180:.6 and .5);
\draw[thick] (.55,-.1) arc (0:180:.55);

\draw (.25,1.225)  .. controls +(0,.05) and +(.5,0) .. (-.25,1.625);
\draw (.25,.825) .. controls +(-.5,0) and +(0,-.05) .. (-.25,1.225);
\pgfsetlinewidth{10*\pgflinewidth}
\draw[white] (-.25,1.225)  .. controls +(0,.05) and +(-.5,0) .. (.25,1.625);
\draw[white]  (-.25,.825) .. controls +(.5,0) and +(0,-.05) .. (.25,1.225) ;
\pgfsetlinewidth{.1*\pgflinewidth}
\draw (-.25,1.225)  .. controls +(0,.05) and +(-.5,0) .. (.25,1.625);
\draw  (-.25,.825) .. controls +(.5,0) and +(0,-.05) .. (.25,1.225) ;

\draw[->] (.25,.825) -- (.5,.825)  .. controls +(.75,0) and +(.75,0) .. (.75,-.825) ;
\draw[<-] (.25,1.625) -- (.5,1.625)  .. controls +(1.5,0) and +(1.25,0) .. (.75,-1.625) ;
\draw[<-] (-.25,.825) -- (-.5,.825)  .. controls +(-.75,0) and +(-.75,0) .. (-.75,-.825) ;
\draw[->] (-.25,1.625) -- (-.5,1.625)  .. controls +(-1.5,0) and +(-1.25,0) .. (-.75,-1.625) ;
\draw (-.75,-.75) rectangle (.75,-1.75);
\node at (0,-1.25) {$t$};
\node at (0,-3) {$W^{-}_{t}$};
\end{scope}


\draw (11.5,.75)  .. controls +(.75,0) and +(-.75,0) .. (13,1.75) ;
\draw (10,.75)  .. controls +(.75,0) and +(-.75,0) .. (11.5,1.75) ;
\pgfsetlinewidth{10*\pgflinewidth}
\draw[white]  (10,1.75)  .. controls +(.75,0) and +(-.75,0) .. (11.5,.75);
\draw[white]  (11.5,1.75)  .. controls +(.75,0) and +(-.75,0) .. (13,.75) ;
\pgfsetlinewidth{.1*\pgflinewidth}
\draw[<-] (10,1.75)  .. controls +(.75,0) and +(-.75,0) .. (11.5,.75);
\draw[->] (11.5,1.75)  .. controls +(.75,0) and +(-.75,0) .. (13,.75) ;
\node at (11.5,0) {a positive full twist};

\draw[<-] (10,-1.25)  .. controls +(.75,0) and +(-.75,0) .. (11.5,-2.25);
\draw[->] (11.5,-1.25)  .. controls +(.75,0) and +(-.75,0) .. (13,-2.25) ;

\pgfsetlinewidth{10*\pgflinewidth}
\draw[white]  (11.5,-2.25)  .. controls +(.75,0) and +(-.75,0) .. (13,-1.25) ;
\draw[white]  (10,-2.25)  .. controls +(.75,0) and +(-.75,0) .. (11.5,-1.25) ;
\pgfsetlinewidth{.1*\pgflinewidth}
\draw (11.5,-2.25)  .. controls +(.75,0) and +(-.75,0) .. (13,-1.25) ;
\draw (10,-2.25)  .. controls +(.75,0) and +(-.75,0) .. (11.5,-1.25) ;
\node at (11.5,-3) {a negative full twist};
\end{tikzpicture}
\caption{The patterns $W^{\pm}_{t}$. The box represents either $\vert t \vert$ positive full twists or $\vert t \vert$ negative full twists, depending on whether $t$ is positive or negative.}\label{figura:Whiteheadknot}
\end{figure}
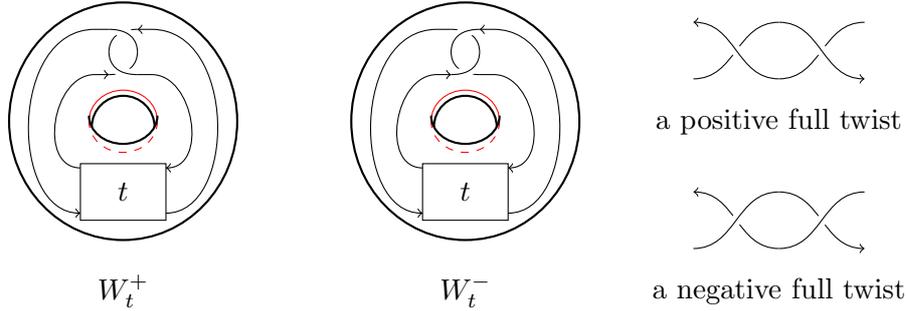
In this paper we use two among the possible definitions of Whitehead double. The two constructions give non-isotopic links, unless our link is a knot or the unlink.

The first family we introduce shall be referred to as fully clasped Whitehead doubles, and is defined as follows. Let $L$ be a link with $\ell$ components, and let $\underline{t}= (t_1,...,t_\ell)\in\mathbb{Z}^\ell $. The \emph{positively} (resp. \emph{negatively}) \emph{fully clasped Whitehead double} $W_{\pm}(L,\underline{t})$ is the $\ell$-component link obtained by the satellite of companion $L$, with pattern\footnote{The homeomorphism sending the torus containing the knot $W_{t}^\pm$ to a tubular neighbouhood of each component is assumed to send the longitude drawn in red in Figure  \ref{figura:Whiteheadknot} to the longitude determined by a Seifert surface.} on the $i$-th component given by the positively (resp. negatively) clasped $t$-twist knot $W^\pm_{t_{i}}$ (see Figure \ref{figura:Whiteheadknot}).

The second family of Whitehead doubles considered in this paper is given by the reduced Whitehead doubles. Let $L$ be a link. Fix $t\in\mathbb{Z}$. The \emph{positive} (resp. \emph{negative}) \emph{reduced Whitehead double} $W^\prime_{\pm}(L,t;L_1)$ is the $\ell$-component link obtained by the satellite of companion $L$, with pattern on $L_1$ given by the positively (resp. negatively) clasped twist knot $W^\pm_{t}$ (Figure \ref{figura:Whiteheadknot}).

For both these families there are diagrams which can be easily described directly from a diagram $D$ of $L$. Given a diagram $D$, representing $L$, denote by $D_{1}$, ...,$D_{\ell}$ the sub-diagrams representing the components $L_{1}$, ...,$L_{\ell}$. Draw a parallel copy of the diagrams $D_{1}$, ...,$D_{\ell}$, add $t_{i} -w(D_{i})$ full twists between the two copies of $D_{i}$, and insert the clasps in all the components to obtain the diagram  $D_{\pm}(L,\underline{t})$ for the fully clasped Whitehead double. The diagram $D^\prime_{\pm}(L,t;L_{1})$ for the reduced Whitehead double can be obtained as follows: draw a parallel copy of $D_{1}$ (the component corresponding to $L_1$), add $t -w(D_{1})$ full twists and a clasp between the two copies of $D_{1}$, and leave all the other components untouched. An example of such diagrams is depicted in Figure \ref{figure:example_of_whitehead_d}.
\begin{figure}[h]
\centering
\begin{tikzpicture}[scale = .75]

\begin{scope}[shift = {+(-4.5,3.5)}]
\draw (-1,0) arc (180:0:1.5);
\draw (1,0) arc (-180:0:1.5);
\pgfsetlinewidth{20*\pgflinewidth}
\draw[white] (1,0) arc (180:0:1.5);
\draw[white] (-1,0) arc (-180:0:1.5);
\pgfsetlinewidth{.05*\pgflinewidth}
\draw (-1,0) arc (-180:0:1.5);
\draw (1,0) arc (180:0:1.5);
\end{scope}

\draw (-1.5,0) arc (180:0:2.5);
\draw (-1,0) arc (180:0:2);
\pgfsetlinewidth{20*\pgflinewidth}
\draw[white]  (1,0) arc (180:90:2.5);
\draw[white] (1.5,0) arc (180:90:2);
\pgfsetlinewidth{.05*\pgflinewidth}

\draw (1,0) arc (180:0:2.5);
\draw (1.5,0) arc (180:0:2);
\draw (1,0) arc (-180:0:2.5);
\draw (1.5,0) arc (-180:0:2);

\pgfsetlinewidth{20*\pgflinewidth}
\draw[white]  (-1.5,0) arc (-180:0:2.5);
\draw[white] (-1,0) arc (-180:0:2);
\pgfsetlinewidth{.05*\pgflinewidth}

\draw (-1.5,0) arc (-180:0:2.5);
\draw (-1,0) arc (-180:0:2);

\draw[fill, white] (-.75,-.5) rectangle (-1.75,.5);

\draw (-.935,-.51) .. controls +(-.005,.05) and +(.25,0) .. (-1.25,.25);
\pgfsetlinewidth{20*\pgflinewidth}
\draw[white]   (-.935,.51) .. controls +(-.005,.05) and +(.25,0) .. (-1.25,-.25);
\pgfsetlinewidth{.05*\pgflinewidth}
\draw (-.865,.71) .. controls +(-.005,.05) and +(.25,0) .. (-1.25,-.25);

\draw (-1.45,.51) .. controls +(-.005,.05) and +(-.25,0) .. (-1.25,-.25);
\pgfsetlinewidth{20*\pgflinewidth}
\draw[white]   (-1.45,-.51) .. controls +(.005,.05) and +(-.25,0) .. (-1.25,.25);
\pgfsetlinewidth{.05*\pgflinewidth}

\draw (-1.45,-.51) .. controls +(.005,.05) and +(-.25,0) .. (-1.25,.25);

\begin{scope}[shift ={+(2.5,0)}]
\draw[fill, white] (-.75,-.5) rectangle (-1.75,.5);
\draw (-.935,-.51) .. controls +(-.005,.05) and +(.25,0) .. (-1.25,.25);
\pgfsetlinewidth{20*\pgflinewidth}
\draw[white]   (-.935,.51) .. controls +(-.005,.05) and +(.25,0) .. (-1.25,-.25);
\pgfsetlinewidth{.05*\pgflinewidth}
\draw (-.865,.71) .. controls +(-.005,.05) and +(.25,0) .. (-1.25,-.25);

\draw (-1.45,.51) .. controls +(-.005,.05) and +(-.25,0) .. (-1.25,-.25);
\pgfsetlinewidth{20*\pgflinewidth}
\draw[white]   (-1.45,-.51) .. controls +(.005,.05) and +(-.25,0) .. (-1.25,.25);
\pgfsetlinewidth{.05*\pgflinewidth}

\draw (-1.45,-.51) .. controls +(.005,.05) and +(-.25,0) .. (-1.25,.25);
\end{scope}

\begin{scope}[shift = {+(8,0)}]
\draw (-1.5,0) arc (180:0:2.5);
\draw (-1,0) arc (180:0:2);
\pgfsetlinewidth{20*\pgflinewidth}
\draw[white] (1.5,0) arc (180:90:2);
\pgfsetlinewidth{.05*\pgflinewidth}

\draw (1.5,0) arc (180:0:2);
\draw (1.5,0) arc (-180:0:2);

\pgfsetlinewidth{20*\pgflinewidth}
\draw[white]  (-1.5,0) arc (-180:0:2.5);
\draw[white] (-1,0) arc (-180:0:2);
\pgfsetlinewidth{.05*\pgflinewidth}

\draw (-1.5,0) arc (-180:0:2.5);
\draw (-1,0) arc (-180:0:2);

\draw[fill, white] (-.75,-.5) rectangle (-1.75,.5);

\draw (-.935,-.51) .. controls +(-.005,.05) and +(.25,0) .. (-1.25,.25);
\pgfsetlinewidth{20*\pgflinewidth}
\draw[white]   (-.935,.51) .. controls +(-.005,.05) and +(.25,0) .. (-1.25,-.25);
\pgfsetlinewidth{.05*\pgflinewidth}
\draw (-.865,.71) .. controls +(-.005,.05) and +(.25,0) .. (-1.25,-.25);

\draw (-1.45,.51) .. controls +(-.005,.05) and +(-.25,0) .. (-1.25,-.25);
\pgfsetlinewidth{20*\pgflinewidth}
\draw[white]   (-1.45,-.51) .. controls +(.005,.05) and +(-.25,0) .. (-1.25,.25);
\pgfsetlinewidth{.05*\pgflinewidth}

\draw (-1.45,-.51) .. controls +(.005,.05) and +(-.25,0) .. (-1.25,.25);

\end{scope}
\end{tikzpicture}
\caption{Diagrams of the (untwisted) fully clasped (bottom left) and (untwisted) reduced (bottom right) Whitehead doubles of the Hopf link (top left).}
\label{figure:example_of_whitehead_d}
\end{figure}
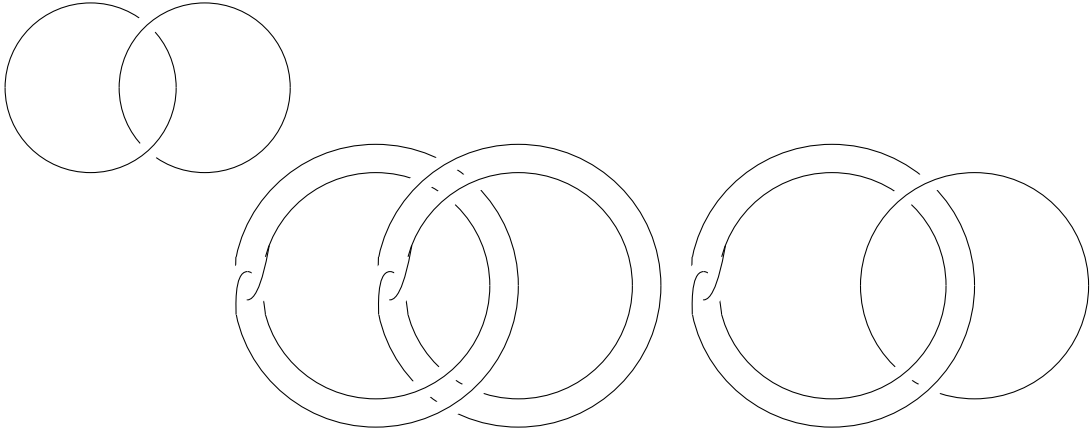

\subsection{Slice-torus link invariants of Whitehead doubles}

Now, we shall study how the slice-torus link invariants behave in the case of Whitehead doubles. Before proceeding, we observe that given a link $L$ and a slice-torus link invariant $\nu$, there are two functions
\[F_{\nu}(L)\: :\: \mathbb{Z}^\ell \longrightarrow \mathbb{R}\:\:\:\:\:\text{ and }\:\:\:\:\:F^\prime_{\nu}(L;L_{1})\: :\: \mathbb{Z} \longrightarrow \mathbb{R}\]
defined as
\[ F_{\nu}(L)(\underline{t})= \nu(W_{+}(L,\underline{t}))\quad\text{and}\quad F^\prime_{\nu}(L;L_{1})(t)= \nu(W^\prime_{+}(L,t;L_{1})).\]
Similarly, we can define $\overline{F}_{\nu}(L)$ and $\overline{F^\prime}_{\nu}(L)$ by using the negative Whitehead doubles. Since two equivalent links have equivalent Whitehead doubles, it follows immediately that these functions are link invariants
(where we identify the variables corresponding to isotopic components),
but we can say more. In fact, we have that all of these functions are also invariant under strong concordance.
\begin{teo}
 \label{teo:concordant}
 Let $L_1$ and $L_2$ be two $\ell$-component links which are strongly concordant. Consider $\underline t\in\Z^\ell$ and $t\in\Z$. Denote by $L'_1$ and $L'_2$ two components of $L_1$ and $L_2$ respectively.
 Then, $W_\pm(L_1,\underline t)$ and $W'_\pm(L_1,t;L'_1)$ are strongly concordant (respecting the ordering of the components) to $W_\pm(L_2,\underline t)$ and $W'_\pm(L_2,t;L'_2)$, respectively. In particular, the functions $F_{\nu},\overline F_{\nu},F'_{\nu}$ and $\overline F'_{\nu}$ are
 strong concordance invariants of links.
\end{teo}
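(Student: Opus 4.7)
The plan is to ``thicken'' the given strong concordance $C=\bigsqcup_{i=1}^{\ell} A_i\subset \mathbb{S}^3\times [0,1]$ (where $A_i$ denotes the annulus joining the $i$-th components of $L_1$ and $L_2$) and to insert the pattern annulus $W^{\pm}_{t_i}\times[0,1]$ inside each tubular neighborhood $\nu(A_i)$. The output will be a disjoint union of annuli whose slices at level $j\in\{0,1\}$ are exactly the required Whitehead doubles.

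The first step, which I expect to be the only real obstacle, is a framing choice. Being an orientable annulus properly embedded in an orientable $4$-manifold, each $A_i$ has a trivial normal bundle, and the set of its framings is a torsor over $[A_i,SO(2)]\cong\Z$. What I would prove is that one may choose a framing of $\nu(A_i)$ whose restriction at each of the two boundary components is the Seifert ($0$-) framing of the corresponding knotted component. This is essentially the classical fact that the $0$-framing of a knot is a concordance invariant; the point is that the Seifert framing of any component of a link depends only on the component itself (not on the ambient link), so the knot-theoretic statement transfers verbatim to the link setting. The chosen framing then yields a diffeomorphism
\[\phi_i\colon\nu(A_i)\xrightarrow{\ \sim\ }\mathbb{D}^2\times\mathbb{S}^1\times[0,1]\]
identifying both Seifert longitudes with $\{0\}\times\mathbb{S}^1\times\{0,1\}$.

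Given the diffeomorphisms $\phi_i$, the construction is immediate. Inside $\mathbb{D}^2\times\mathbb{S}^1\times[0,1]$ the product $W^{\pm}_{t_i}\times[0,1]$ is a properly embedded annulus whose slice at $j\in\{0,1\}$ is exactly the clasped twist-knot pattern defining the Whitehead doubling. Therefore
\[\widetilde C \;=\; \bigsqcup_{i=1}^{\ell}\phi_i^{-1}\!\bigl(W^{\pm}_{t_i}\times[0,1]\bigr)\]
is a disjoint union of annuli properly embedded in $\mathbb{S}^3\times[0,1]$, and, by the very definition of satellite construction, its intersection with $\mathbb{S}^3\times\{j\}$ equals $W_{\pm}(L_j,\underline{t})$. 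Hence $\widetilde C$ is a strong concordance from $W_{\pm}(L_1,\underline{t})$ to $W_{\pm}(L_2,\underline{t})$, respecting the indexing of components.

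For the reduced doubles, I would run the same argument but thicken only the distinguished annulus $A_{i_0}$ (the one corresponding to $L'_1\leftrightarrow L'_2$) and leave the other annuli $A_i$, $i\neq i_0$, unchanged; the resulting surface is again a disjoint union of annuli, now realising a strong concordance between $W'_{\pm}(L_1,t;L'_1)$ and $W'_{\pm}(L_2,t;L'_2)$. The ``in particular'' clause is then an immediate consequence: since $\nu$ is a strong concordance invariant of links, the constructed strong concordances give $\nu(W_{\pm}(L_1,\underline{t}))=\nu(W_{\pm}(L_2,\underline{t}))$ and $\nu(W'_{\pm}(L_1,t;L'_1))=\nu(W'_{\pm}(L_2,t;L'_2))$, so all four functions $F_{\nu},\overline{F}_{\nu},F'_{\nu},\overline{F}'_{\nu}$ descend to strong concordance invariants.
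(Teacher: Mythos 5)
Your proof is correct, but the route is genuinely different from the paper's. The paper decomposes the given strong concordance into a movie of elementary moves (births, band moves, deaths, Reidemeister moves) and shows that each move on the companion can be lifted to a move --- or a double of that move --- on the satellite; the delicate points in that argument are tracking which elementary moves can occur between a doubled strand and an undoubled one, ensuring clasps and twist boxes can be slid out of the way of deaths, and arguing that two clasped components are never fused (as that would violate strong concordance). Your argument sidesteps all of that bookkeeping: you observe that each concordance annulus $A_i$ has trivial normal bundle, invoke the classical fact that the Seifert framing is a concordance invariant (so one may choose a trivialization of $\nu(A_i)$ restricting to the $0$-framing at both ends), and simply insert the cylinder $W^{\pm}_{t_i}\times[0,1]$ over the pattern into the framed tube. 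This is cleaner and more conceptual, has no casework, and in fact shows more: the same construction proves that strong concordance is preserved by \emph{any} satellite pattern compatible with the $0$-framing, of which the fully clasped and reduced Whitehead doubles are special cases. The only nontrivial input you rely on is the framing lemma, whose proof (gluing a pushed-in Seifert surface onto the annulus and using that a closed surface in $\mathbb{S}^4$ has self-intersection $0$) you correctly identify as classical; it would be worth spelling out, or giving a reference, since it is the crux of the argument and is where the entire combinatorial content of the paper's movie proof is absorbed. What the paper's approach buys in exchange is a completely elementary, picture-level argument that does not presuppose familiarity with normal bundles, framings, or the $4$-dimensional intersection form.
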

\begin{proof}
 Suppose that the strong concordance between $L_1$ and $L_2$ appears like in Figure \ref{figure:cobordism}. Consider a movie (i.e. a sequence of band moves, birth and death of unknotted components, and Reidemeister moves) from a diagram of $L_1$ into one of $L_2$, describing a concordance.
We start by taking the fully clasped Whitehead doubles of $L_1$, obtained by doubling the given diagram of $L_1$ as we described before in this section. 
 Every birth move now becomes a double birth move, see Figure \ref{figure:two_zero_handles}, which corresponds to the attachment of two $0$-handles.
  \begin{figure}[h]
     \centering
\begin{tikzpicture}[scale = .6]
\node at (0,5) {$\emptyset$};
\draw[->] (1,5) -- (3,5);
\draw[->] (4,5) arc (180:-180:2);
\draw[<-] (5,5) arc (180:-180:1);
\end{tikzpicture}
\caption{A double birth move.}
\label{figure:two_zero_handles}
 \end{figure}
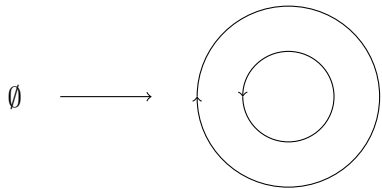
 Moreover, when we have a split move, the component involved will be doubled
 and then the move now consists of two band moves, instead of one, as shown in Figure \ref{figure:double_band}. Therefore, we have two cases, depending on whether the doubled component is clasped or not. If it is clasped then, after the band moves, it will be split into three components, one containing the clasp and the other two being one the double of the other. On the other hand, if the doubled component is not clasped then the bands will turn it into two doubled components.
\begin{figure}[h]
     \centering
\begin{tikzpicture}[scale = .5]
\begin{scope}[shift ={+(-20,0)}]
\draw[<-] (1,5) .. controls +(1,-1.5) and +(1,1.5) .. (1,0);
\draw[->] (5,1) .. controls +(-.75,1) and +(-.75,-1) .. (5,4);
\draw[<-] (0,1) .. controls +(.75,1) and +(.75,-1) .. (0,4);
\draw[<-] (4,0) .. controls +(-1,1.5) and +(-.75,-1.5) .. (4,5);

\draw[dashed] (1.75,2.5) -- (3.25,2.5);
\end{scope}

\draw[->] (-14,2.5) -- (-11,2.5);
\node[above] at ( -12.5,2.5) {Band move};

\begin{scope}[shift ={+(-10,0)}]
\draw[<-] (1,5) .. controls +(.75,-1) and +(-.75,-1) .. (4,5);
\draw[->] (5,1) .. controls +(-.75,1) and +(-.75,-1) .. (5,4);
\draw[->] (1,0) .. controls +(.75,1) and +(-.75,1) .. (4,0);
\draw[<-] (0,1) .. controls +(.75,1) and +(.75,-1) .. (0,4);
\draw[dashed] (.55,2.5) -- (4.55,2.5);
\end{scope}

\draw[->] (-4,2.5) -- (-1,2.5);
\node[above] at ( -2.5,2.5) {Band move};

\draw[<-] (1,5) .. controls +(.75,-1) and +(-.75,-1) .. (4,5);
\draw[->] (0,4) .. controls +(1,-1.5) and +(-1,-1.5) .. (5,4);
\draw[->] (1,0) .. controls +(.75,1) and +(-.75,1) .. (4,0);
\draw[<-] (0,1) .. controls +(1,1.5) and +(-1,1.5) .. (5,1);
\end{tikzpicture}
\caption{A double band move.}
\label{figure:double_band}
 \end{figure}
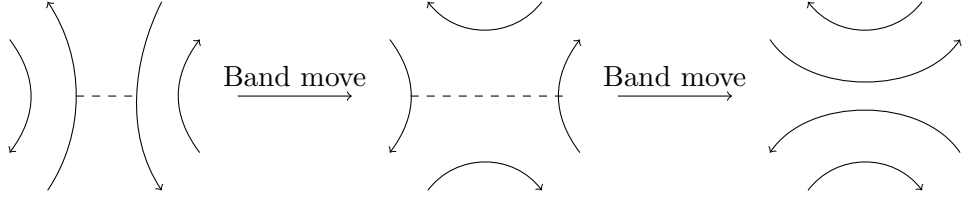

 Now in the case of a merge move, we observe that two clasped components cannot be joined together. In fact, otherwise our cobordism would not be a strong concordance. This implies that each merge move corresponds precisely to the inverse of a split move and then we obtain the same conclusions of the previous case.
 
 At this point we can perform the death moves, which will be doubled in the same way of the birth moves before. This is because the clasps, and the full twists, can be isotoped to be everywhere on their doubled component of $L_1$; therefore we can always be sure that clasps and twists will not appear on the components that we want to cancel with the death moves.
 
 After this procedure, we are left with a diagram of the fully clasped Whitehead double of $L_2$, with the same number of twists. Moreover, the new cobordism that we obtained is a strong concordance by construction.
 
 For the reduced Whitehead doubles the reasoning is exactly the same, provided that we take care of two more details.
 
 First, we only double the birth moves that will be joined with the clasped component and not the others.
 
 Second, we have to observe that we cannot have a merge move between a component that is doubled and one that is not. In fact, we start from a strong concordance and, as we remarked before, we cannot merge different components of $L_1$ together. 
 Then the claim follows from the same argument we used for fully clasped doubles.
\end{proof}

\begin{remark}
In the case $L$ is a knot, the functions $F_{\nu}(L)$ and $F^\prime_{\nu}(L;L)$ coincide. Furthermore, when $\nu$ is $\Z$-valued these functions assume only two values, and the point where their value changes is the $t_{\nu}$ invariant introduced in \cite{LivingstonNaik}.
\end{remark}

Let us start by proving that the functions we introduced are bounded.

\begin{teo}\label{teorema:bounds_on_t_nu}
For each $\ell$-component link $L$ and $\underline{m}\in \mathbb{Z}^\ell$, then
\[ F_{\nu}(L)(\underline{m}) \in [0, \ell]\quad \text{and}\quad  \overline{F}_{\nu}(L)(\underline{m}) \in [-\ell,0]. \]
Furthermore, given a component $L_{0}$ of $L$ and $m\in\Z$ we have
\[ F^\prime_{\nu}(L;L_{0})(m) \in [\nu_0, \nu_{0} + 1]\quad \text{and}\quad  \overline{F}^\prime_{\nu}(L;L_{0})(m) \in [\nu_0 - 1,\nu_0], \]
where
\[\nu_0 = \begin{cases} \nu(L\setminus L_{0}) & \ell \geq 2\\ 0 & \ell =1\end{cases}\]
\end{teo}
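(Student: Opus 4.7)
The plan is to construct, in each of the four cases, an explicit ribbon cobordism between the Whitehead double and a simple reference link, decomposed as a sequence of band moves passing through an intermediate link whose $\nu$-value can be computed exactly. The crucial building block is the observation that a positive Hopf link $H_+$, placed in a small $3$-ball, can be transformed into a Whitehead-doubled component by a single fusion band that ``stretches'' the two Hopf components along the path of the corresponding companion, with the framing of the band prescribing the twist parameter.

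For $F_\nu$, I will start from the $\ell$-component unlink $\bigcirc_\ell$, arranged in $\ell$ pairwise disjoint small balls disjoint from $L$, and apply $\ell$ fission bands (each inserting one positive full twist) to produce the disjoint union $H_+^{(1)}\sqcup\cdots\sqcup H_+^{(\ell)}$. A further sequence of $\ell$ fusion bands, the $i$-th following the embedded path of $L_i$ with framing chosen to yield total twist $m_i$, then produces $W_+(L,\underline m)$. By Lemma \ref{lemma:values_of_nu_1} and additivity (Property~(B)),
\[
\nu\bigl(H_+^{(1)}\sqcup\cdots\sqcup H_+^{(\ell)}\bigr)=\ell\cdot\nu(H_+)=\ell.
\]
Property~(A), applied to each of the $\ell$ fusion bands, asserts that each step can decrease $\nu$ by at most $1$ and cannot increase it; accumulating these inequalities,
\[
0=\ell-\ell\leq\nu(W_+(L,\underline m))\leq\ell,
\]
which is the desired range $[0,\ell]$.

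The remaining three bounds will follow by straightforward variants of this construction. For $\overline F_\nu$ I replace positive Hopf links by negative ones (now $\nu(H_-)=0$ by Lemma \ref{lemma:values_of_nu_1}), giving an intermediate of $\nu$-value $0$ and yielding $[-\ell,0]$. For $F'_\nu$ and $\overline F'_\nu$ I perform the same fission-then-fusion procedure only at the component $L_0$, starting from $(L\setminus L_0)\sqcup\bigcirc$; the intermediate is then $(L\setminus L_0)\sqcup H_\pm$, with $\nu$-value $\nu_0+1$ and $\nu_0$ respectively, and a single application of Property~(A) at the lone fusion band yields the announced bounds $[\nu_0,\nu_0+1]$ and $[\nu_0-1,\nu_0]$.

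The main technical obstacle is not the $\nu$-inequalities, but verifying that the proposed sequence of band moves genuinely realises the Whitehead double in question: the framing of the long fusion band along $L_i$, combined with the twist introduced by the fission, must sum to the prescribed parameter $m_i$, and one needs to check that no unintended crossings appear between the long bands and the other components of the intermediate. This is a bookkeeping check that can be carried out using the standard Seifert framing of $L$.
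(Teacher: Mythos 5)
Your proposal is correct and is essentially the paper's own argument run in the opposite direction: the paper disassembles $W_{\pm}(L,m;L_0)$ by a single band move into $(L\setminus L_0)\sqcup H_{\pm}$ (and iterates this for the fully clasped case), while you assemble the Whitehead double from the Hopf-link pieces by the inverse fusion bands; the intermediate links, the use of Property~(A), Property~(B), and the values $\nu(H_+)=1$, $\nu(H_-)=0$ from Lemma~\ref{lemma:values_of_nu_1} are all the same. The geometric bookkeeping you flag at the end (that the long fusion band with the appropriate framing realises the prescribed Whitehead double) is exactly the content of the band move the paper illustrates in Figure~\ref{figure:band_move_W_double}.
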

\begin{proof}
Let us prove only the part of statement concerning the invariants $F^\prime_{\nu}$ and $\overline{F}^\prime_{\nu}$. The rest of the statement can be proved by iterating the same reasoning.

Notice that we can obtain a diagram of  $(L\setminus L_{0})\sqcup H_{+}$ via a single band move on $D_{+}(L,m;L_{0})$ (similar to the one illustrated in Figure \ref{figure:band_move_W_double}). Thus, from Properties (A) and (B), and Lemma \ref{lemma:values_of_nu_1} it follows that
\[\nu(W_{+}(L,m;L_{0})) \leq \nu((L\setminus L_{0})\sqcup H_{+}) =  \nu_{0}+ 1 \]
and
\[ \nu_{0} =\nu_{0}+ 1 -1 = \nu((L\setminus L_{0})\sqcup H_{+})  - 1 \leq \nu(W_{+}(L,m;L_{0})). \]
The same reasoning applies for $\overline{F}^\prime_{\nu}$, the only change is that we get $H_{-}$ instead of $H_{+}$  (see Figure \ref{figure:band_move_W_double}). Since $\nu(H_{-}) = 0$, the result follows.
\begin{figure}[H]
\centering
\begin{tikzpicture}[scale =.5]
\draw[-> ] (2,0) arc (0:90:2)--(-4,2);
\pgfsetlinewidth{20*\pgflinewidth}
\draw[white] (0,0) arc (180:90:2);
\pgfsetlinewidth{.05*\pgflinewidth}
\draw[<- ] (0,0) arc (180:90:2) -- (4,2);

\draw[-> ] (0,0) arc (180:270:2)-- (4,-2);
\pgfsetlinewidth{20*\pgflinewidth}
\draw[white] (2,0) arc (0:-90:2);
\pgfsetlinewidth{.05*\pgflinewidth}
\draw[<- ] (2,0) arc (0:-90:2) -- (-4,-2);

\draw[dashed] (-2,-2) -- (-2,2);

\draw[->] (6,0) -- (8,0);
\node at (7,.5) {Band};
\node at (7,-.5) {move};

\begin{scope}[shift ={+(14,0)}]
\draw (2,0) arc (0:180:2);
\pgfsetlinewidth{20*\pgflinewidth}
\draw[white] (0,0) arc (180:90:2);
\pgfsetlinewidth{.05*\pgflinewidth}
\draw[<- ] (0,0) arc (180:90:2) -- (4,2);

\draw[-> ] (0,0) arc (180:270:2)-- (4,-2);
\pgfsetlinewidth{20*\pgflinewidth}
\draw[white] (2,0) arc (0:-90:2);
\pgfsetlinewidth{.05*\pgflinewidth}
\draw[<- ]  (2,0) arc (0:-180:2) ;
\draw[<-] (-5,2,0) arc (90:-90:2);
\end{scope}
\end{tikzpicture}
\caption{A band move between $W_{-}(L,m;L_{0})$ and $(L\setminus L_{0})\sqcup H_{-}$.}\label{figure:band_move_W_double}
\end{figure}
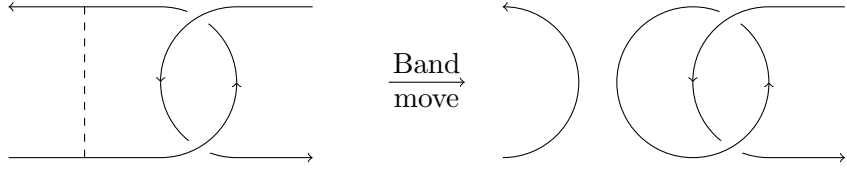
\end{proof}

Moreover, there is a non-increasing property akin to the one proved in \cite{LivingstonNaik}. 

\begin{teo}
\label{teo:decreasing}
Let $L$ be an oriented link, and let $\nu$ be a slice-torus link invariant. If $\underline{m}$ and $\underline{n}$ are two elements of $\mathbb{Z}^\ell$ such that $m_{i} \geq n_{i}$ for all $i$, then 
\[ F_{\nu}(L)(\underline{n}) - \sum_{i=1}^\ell (m_{i} - n_{i}) \leq F_{\nu}(L)(\underline{m}) \leq F_{\nu}(L)(\underline{n})\:.\]
Furthermore, if $m\geq n$ are two integers then
\[ F^\prime_{\nu}(L;L_0)(n)  -  (m - n) \leq F^\prime_{\nu}(L;L_{0})(m) \leq F^\prime_{\nu}(L;L_0)(n) \]
Moreover, the same result holds for $\overline{F}_{\nu}$ and $\overline{F}^\prime_{\nu}$.
\end{teo}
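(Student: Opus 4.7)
The plan is to reduce the inequality to the elementary case where $\underline m = \underline n + \mathbf e_i$ for a single index $i$, and then iterate. In this elementary case, the diagrams of $W_+(L, \underline m)$ and $W_+(L, \underline n)$ differ only in the $i$-th twist box: the box for $W_+(L, \underline m)$ contains exactly one additional full twist. Because this full twist consists of two crossings of the same sign, it can be eliminated by performing a single crossing change at one of the two crossings, followed by a Reidemeister~II move that cancels the resulting pair of adjacent crossings of opposite sign. Thus, up to isotopy, $W_+(L, \underline m)$ is obtained from $W_+(L, \underline n)$ by a single crossing change, and Proposition~\ref{proposition:crossing_change} is applicable.

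To identify the direction of the resulting inequality, one needs to determine the sign of the crossings in an additional full twist, which depends on the convention adopted in the paper for indexing the twist box. A local orientation analysis inside the twist box---where the two doubled strands are antiparallel---confirms that an increase of $t_i$ by one corresponds to the addition of two negative crossings; the same conclusion may be reached by specializing to knots and checking consistency with the non-increasing property of $F_\nu(K)(t)$ established by Livingston and Naik. Taking $L_- = W_+(L, \underline m)$ and $L_+$ equal to the intermediate diagram (which, after the Reidemeister~II move, represents $W_+(L, \underline n)$), Proposition~\ref{proposition:crossing_change} yields
\[\nu(W_+(L, \underline m)) \leq \nu(W_+(L, \underline n)) \leq \nu(W_+(L, \underline m)) + 1,\]
which rearranges to $\nu(W_+(L, \underline n)) - 1 \leq \nu(W_+(L, \underline m)) \leq \nu(W_+(L, \underline n))$. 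Iterating this elementary inequality $\sum_{i=1}^{\ell}(m_i - n_i)$ times proves the desired bound for $F_\nu$.

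The proofs for $F^\prime_\nu$, $\overline F_\nu$, and $\overline F^\prime_\nu$ follow the very same scheme. For $F^\prime_\nu$ only one doubled component is affected by the extra twist, so exactly the same crossing change and Reidemeister~II move can be performed. For the negatively-clasped versions $\overline F_\nu$ and $\overline F^\prime_\nu$, the sign of the clasp plays no role in the argument, since the crossing change is entirely localized inside the twist box, whose structure does not depend on the clasp.

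The main obstacle in carrying out this plan is the orientation analysis fixing the sign of the crossings in an additional positive full twist, since this sign controls the direction of the inequality; without the correct sign identification, the same argument would yield a non-decreasing bound rather than the required non-increasing one.
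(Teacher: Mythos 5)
Your proof is correct and follows essentially the same strategy as the paper: reduce to the case where a single twist parameter increases by one, observe that the two Whitehead doubles are then related by a second Reidemeister move together with a single crossing change, identify the sign of that crossing change, and invoke Proposition~\ref{proposition:crossing_change}. The paper delegates the sign determination to Figure~\ref{figure:crossing_change_increasing} rather than spelling out the orientation analysis of the antiparallel doubled strands, but your reasoning there (including the sanity check against the Livingston--Naik non-increasing property for knots) is accurate.
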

\begin{proof}
It is sufficient to prove the result for the case $m_{i} = n_{i}$, for all $i \ne i_{0}$, and $m_{i_0} = n_{i_0} + 1$. It is sufficient to notice that, in this case, one may obtain $W_{\pm}(L,\underline{m})$ (resp. $W^\prime_{\pm}(L,m;L_0)$) from  $W_{\pm}(L,\underline{n})$ (resp. $W^\prime_{\pm}(L,n;L_0)$) by a second Reidemeister move and a crossing change from a positive crossing to a negative crossing (see Figure \ref{figure:crossing_change_increasing}) and the result follows from Proposition \ref{proposition:crossing_change}.
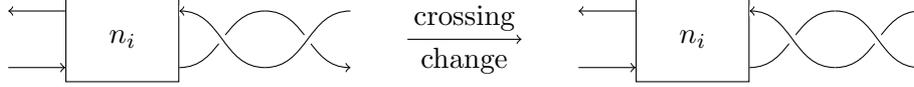
\begin{figure}[H]
\centering
\begin{tikzpicture}[scale = .75]
\begin{scope}[shift ={+(-10,0)}]

\draw[<-] (8,.75) -- (7,.75);
\draw[->] (8,1.75) -- (7,1.75);
\draw (9.99,2) rectangle (8.01,.5);
\node at (9,1.25) {$n_{i}$};

\draw[->] (11.5,1.75)  .. controls +(.75,0) and +(-.75,0) .. (13,.75) ;
\draw (10,.75)  .. controls +(.75,0) and +(-.75,0) .. (11.5,1.75) ;
\pgfsetlinewidth{10*\pgflinewidth}
\draw[white]  (10,1.75)  .. controls +(.75,0) and +(-.75,0) .. (11.5,.75);
\draw[white] (11.5,.75)  .. controls +(.75,0) and +(-.75,0) .. (13,1.75) ;
\pgfsetlinewidth{.1*\pgflinewidth}
\draw[<-] (10,1.75)  .. controls +(.75,0) and +(-.75,0) .. (11.5,.75);
\draw (11.5,.75)  .. controls +(.75,0) and +(-.75,0) .. (13,1.75) ;
\end{scope}
\draw[<-] (8,.75) -- (7,.75);
\draw[->] (8,1.75) -- (7,1.75);
\draw (9.99,2) rectangle (8.01,.5);
\node at (9,1.25) {$n_{i}$};

\draw (11.5,.75)  .. controls +(.75,0) and +(-.75,0) .. (13,1.75) ;
\draw (10,.75)  .. controls +(.75,0) and +(-.75,0) .. (11.5,1.75) ;
\pgfsetlinewidth{10*\pgflinewidth}
\draw[white]  (10,1.75)  .. controls +(.75,0) and +(-.75,0) .. (11.5,.75);
\draw[white]  (11.5,1.75)  .. controls +(.75,0) and +(-.75,0) .. (13,.75) ;
\pgfsetlinewidth{.1*\pgflinewidth}
\draw[<-] (10,1.75)  .. controls +(.75,0) and +(-.75,0) .. (11.5,.75);
\draw[->] (11.5,1.75)  .. controls +(.75,0) and +(-.75,0) .. (13,.75) ;

\draw[<-] (6,1.25) -- (4,1.25);
\node at (5,1.625) {crossing};
\node at (5,.875) {change};

\end{tikzpicture}
\caption{How obtain $W_{\pm}(L,\underline{m})$ (resp. $W^\prime_{\pm}(L,m;L_0)$) from  $W_{\pm}(L,\underline{n})$ (resp. $W^\prime_{\pm}(L,n;L_0)$) by a second Reidemeister move and a crossing change.}\label{figure:crossing_change_increasing}
\end{figure}
\end{proof}

In particular, Theorem \ref{teo:decreasing} implies that every time we add a positive full twist on one component the value of $\nu$ cannot increase, and it may decrease at most by one.

From the fact that $\nu(K) = - \nu(-K^*)$, for each knot $K$ and slice-torus invariant $\nu$, it follows that $\overline F_{\nu}(K)(t) = -F_{\nu}(K^*)(-t)$. In the case of links this property does not hold, but the invariants $F_{\nu}$ and $\overline{F}_{\nu}$ (and their ``reduced'' variants) still share the following similar, albeit weaker, symmetry property which is made precise in the following result.

\begin{prop}
\label{prop:symmetry}
Let $L$ be an $\ell$-component oriented link and let $\nu$ be a slice-torus link invariant. If there exists $\underline{m}$ (resp. $\underline{n}$) in $\mathbb{Z}^\ell$ such that
\[F_{\nu}(L)(\underline{m}) = \ell\qquad\text{( resp. } \overline{F}_{\nu}(L)(\underline{n})= -\ell\text{)} ,\]
then
\[\overline{F}_{\nu}(L)(\underline{m}) = 0 \qquad\text{( resp. } \quad F_{\nu}(L)(\underline{n})= 0 \text{)}\]
Moreover, let $L_0$ be a component of $L$, if there exists $m$ (resp. $n$) in $\mathbb{Z}$ such that
\[F^\prime_{\nu}(L;L_0)(m) = \nu_{0} +1\qquad\text{( resp. }  \overline{F}^\prime_{\nu}(L;L_0)(n)= \nu_0-1 \text{)},\]
then
\[\overline{F}^\prime_{\nu}(L;L_0)(m) = \nu_0\qquad\text{( resp. } F^\prime_{\nu}(L;L_0)(n)= \nu_0\text{)}, \]
where $\nu_{0}$ is defined as in Theorem \ref{teorema:bounds_on_t_nu}.
\end{prop}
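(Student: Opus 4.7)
The plan is to construct, for each of the two flavors of Whitehead double, a cobordism from $W_+(L,\underline m)$ to $W_-(L,\underline m)$ (resp.\ from $W^\prime_+(L,m;L_1)$ to $W^\prime_-(L,m;L_1)$) of controlled genus and number of connected components, and then combine the inequalities of Proposition \ref{proposition:slice_genus_bound} with the a priori bounds of Theorem \ref{teorema:bounds_on_t_nu} to force the desired equalities.

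The key local ingredient is a genus-$1$ cobordism that reverses the sign of a single clasp. Inside a neighborhood of a clasp one performs two oriented band moves: a fission band move that unclasps the doubled component (its two output strands being the two parallel strands of the corresponding $2$-cable of the underlying link component), followed by a fusion band move that reclasps those same two strands with a clasp of the opposite sign. The resulting local surface has two $1$-handles, two boundary circles, and is connected (since the two strands merged by the fusion are exactly those produced by the preceding fission); hence it has Euler characteristic $-2$ and genus $1$. For the fully clasped statement one performs this clasp change independently in each of the $\ell$ clasp neighborhoods, taking the cobordism to be the trivial cylinder outside these neighborhoods. This yields a cobordism $\Sigma$ from $W_+(L,\underline m)$ to $W_-(L,\underline m)$ with $k=\ell$ connected components (one per clasp) and total genus $g=\ell$. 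For the reduced statement only the clasp on $L_1$ is modified while the remaining $\ell-1$ components are connected by cylinders, giving $k=\ell$ and $g=1$.

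Feeding these parameters into Proposition \ref{proposition:slice_genus_bound} one obtains in both cases the inequality
\[\nu(W_+(L,\underline m))\leq\nu(W_-(L,\underline m))+g+\ell-k,\]
which in the fully clasped case reduces to $\nu(W_+)\leq\nu(W_-)+\ell$ and in the reduced case reduces to $\nu(W^\prime_+)\leq\nu(W^\prime_-)+1$. The hypothesis $F_\nu(L)(\underline m)=\ell$ then forces $\overline F_\nu(L)(\underline m)\geq 0$, which combined with the upper bound $\overline F_\nu(L)(\underline m)\leq 0$ from Theorem \ref{teorema:bounds_on_t_nu} yields $\overline F_\nu(L)(\underline m)=0$; the ``resp.''\ statement $\overline F_\nu(L)(\underline n)=-\ell\Rightarrow F_\nu(L)(\underline n)=0$ follows from the same rearranged inequality together with the lower bound $F_\nu(L)(\underline n)\geq 0$. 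The reduced case, in both directions, is handled identically using $\nu(W^\prime_+)\leq\nu(W^\prime_-)+1$ together with the bounds $\nu_0\leq\nu(W^\prime_+)\leq\nu_0+1$ and $\nu_0-1\leq\nu(W^\prime_-)\leq\nu_0$.

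The main obstacle will be the careful verification that the local clasp change cobordism has genus exactly $1$ and is realizable through two oriented band moves. The subtlety is that in a Whitehead double clasp the two strands are oppositely oriented (this is precisely how the clasp glues the two parallel strands into a single component), so the oriented resolution of a clasp crossing is the cup-cap resolution rather than the two-parallel-strands one; one must check that this oriented resolution does perform the intended fission to the $2$-cable, and that the symmetric oriented band reattachment reclasps with opposite sign, all while keeping the local cobordism connected so that the Euler characteristic count produces genus exactly $1$ rather than a higher value.
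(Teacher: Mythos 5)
Your proof is correct and takes essentially the same approach as the paper: the key estimate $\lvert\nu(W_{+})-\nu(W_{-})\rvert\leq\ell$ (resp.\ $\leq 1$ for the reduced doubles) coming from a clasp-reversing cobordism, combined with the a priori bounds of Theorem \ref{teorema:bounds_on_t_nu}. The "obstacle" you flag is precisely what the paper disposes of by exhibiting the two-band-move movie in Figure \ref{figura:bands_doubles} (giving the per-clasp genus-$1$ cobordism you describe and then applying Property (A) directly), so your invocation of Proposition \ref{proposition:slice_genus_bound} is an equivalent repackaging of the same argument.
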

\begin{proof}
We shall prove only the parts of the statement concerning the functions $F^\prime_{\nu}$ and $\overline{F}^\prime_{\nu}$. The proofs for the other functions are completely analogous and thence left to the reader.
Let us start by considering the sequence of band moves (and isotopies) depicted in Figure \ref{figura:bands_doubles}. 
\begin{figure}[H]
\centering
\begin{tikzpicture}[scale = .5]
\begin{scope}[shift ={+(-18,0)}]

\draw(3,2.5) .. controls +(0,-.5) and +(1.5,1.5) .. (1.5,0);
\pgfsetlinewidth{20*\pgflinewidth}
\draw[white] (2,2.5) .. controls +(0,-.5) and +(-1.5,1.5) .. (3.5,0);
\pgfsetlinewidth{.05*\pgflinewidth}
\draw[->] (2,2.5) .. controls +(0,-.5) and +(-1.5,1.5) .. (3.5,0);

\draw (3.5,5) .. controls +(-1.5,-1.5) and +(0,.5) .. (2,2.5);
\pgfsetlinewidth{20*\pgflinewidth}
\draw[white] (1.5,5) .. controls +(1.5,-1.5) and +(0,.5) .. (3,2.5);
\pgfsetlinewidth{.05*\pgflinewidth}
\draw[<-] (1.5,5) .. controls +(1.5,-1.5) and +(0,.5) .. (3,2.5);

\draw[dashed] (2,2.5) -- (3,2.5);
\end{scope}

\draw[->] (-14,2.5) -- (-11,2.5);
\node[above] at ( -12.5,2.5) {Band move};

\begin{scope}[shift ={+(-12,0)}]

\draw(2.5,2) .. controls +(.75,0) and +(1.5,1.5) .. (1.5,0);
\pgfsetlinewidth{20*\pgflinewidth}
\draw[white] (2.5,2) .. controls +(-.75,0) and +(-1.5,1.5) .. (3.5,0);
\pgfsetlinewidth{.05*\pgflinewidth}
\draw[->] (2.5,2) .. controls +(-.75,0) and +(-1.5,1.5) .. (3.5,0);

\draw (3.5,5) .. controls +(-1.5,-1.5) and +(-.75,0) .. (2.5,3);
\pgfsetlinewidth{20*\pgflinewidth}
\draw[white] (1.5,5) .. controls +(1.5,-1.5) and +(.75,0) .. (2.5,3);
\pgfsetlinewidth{.05*\pgflinewidth}
\draw[<-] (1.5,5) .. controls +(1.5,-1.5) and +(.75,0) .. (2.5,3);

\end{scope}

\draw[->] (-8,2.5) -- (-5,2.5);
\node[above] at ( -6.5,2.5) {$R_1$ moves};

\begin{scope}[shift ={+(-6,0)}]

\draw[->] (2.5,2) .. controls +(-.75,0) and +(-1.5,1.5) .. (3.5,0);
\pgfsetlinewidth{20*\pgflinewidth}
\draw[white] (2.5,2) .. controls +(.75,0) and +(1.5,1.5) .. (1.5,0);
\pgfsetlinewidth{.05*\pgflinewidth}
\draw(2.5,2) .. controls +(.75,0) and +(1.5,1.5) .. (1.5,0);

\draw[<-] (1.5,5) .. controls +(1.5,-1.5) and +(.75,0) .. (2.5,3);
\pgfsetlinewidth{20*\pgflinewidth}
\draw[white] (3.5,5) .. controls +(-1.5,-1.5) and +(-.75,0) .. (2.5,3);
\pgfsetlinewidth{.05*\pgflinewidth}
\draw (3.5,5) .. controls +(-1.5,-1.5) and +(-.75,0) .. (2.5,3);

\draw[dashed] (2.5,2) -- (2.5,3);

\end{scope}

\draw[->] (-2,2.5) -- (1,2.5);
\node[above] at ( -.5,2.5) {Band move};

\draw[->] (2,2.5) .. controls +(0,-.5) and +(-1.5,1.5) .. (3.5,0);
\pgfsetlinewidth{20*\pgflinewidth}
\draw[white] (3,2.5) .. controls +(0,-.5) and +(1.5,1.5) .. (1.5,0);
\pgfsetlinewidth{.05*\pgflinewidth}
\draw(3,2.5) .. controls +(0,-.5) and +(1.5,1.5) .. (1.5,0);

\draw[<-] (1.5,5) .. controls +(1.5,-1.5) and +(0,.5) .. (3,2.5);
\pgfsetlinewidth{20*\pgflinewidth}
\draw[white] (3.5,5) .. controls +(-1.5,-1.5) and +(0,.5) .. (2,2.5);
\pgfsetlinewidth{.05*\pgflinewidth}
\draw (3.5,5) .. controls +(-1.5,-1.5) and +(0,.5) .. (2,2.5);
\end{tikzpicture}
\caption{A sequence of band moves and isotopies relating  positive and negative clasps.}\label{figura:bands_doubles}
\end{figure}
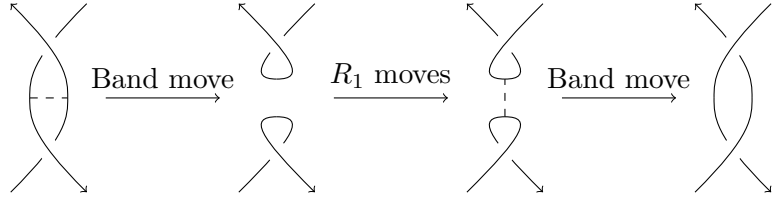
By Property (A) we obtain that
\[\vert\: \nu(W_{+}^\prime(L,m;L_0)) - \nu(W_{-}^\prime(L,m;L_0))\: \vert \leq 1.\]
It follows that, if $ F^\prime_{\nu}(L;L_0)(n)= \nu_0 + 1$ (resp. $\overline{F}^\prime_{\nu}(L;L_0)(m) = \nu_0 - 1$), then 
\[ \nu_{0} \leq\overline{F}^\prime_{\nu}(L;L_0)(m) \quad \text{(resp. }F^\prime_{\nu}(L;L_0)(n)\leq \nu_0\text{).} \]
Theorem \ref{teorema:bounds_on_t_nu} provides the other half of the bound(s), and the equality follows.
\end{proof}
We saw that $\nu(W_\pm(L,\underline t))$ is bounded for every $\ell$-component link $L$. Now we want to prove that for some $\ell$-tuples $(t_1,...,t_{\ell})$ the invariant 
$\nu$ assumes the maximum value possible.
To do this we equip $\mathbb{S}^3$ with the standard contact structure $\xi_{\text{st}}$ and we recall that the Thurston-Bennequin number 
$\text{tb}(\mathcal L)$ of a Legendrian knot
is the linking number between $L$ and the contact framing $L_{\xi_{\text{st}}}$ induced by $\xi_{\text{st}}$. See \cite{Etnyre} for details.
\begin{teo}
 \label{teo:maximum}
 Suppose that $\nu$ is a slice-torus link invariant. Then, for each Legendrian representative $\mathcal L$ of 
 $L$ in $(\mathbb{S}^3,\xi_{\emph{st}})$, and $(t_1,...,t_{\ell})$ such that 
 $t_i=\emph{tb}(\mathcal L_i)$ for every $i=1,...,\ell$, we have that \[ F_{\nu}(L)(t_1,...,t_{\ell})=\ell\:.\]
\end{teo}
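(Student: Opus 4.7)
The upper bound $F_\nu(L)(\underline{t}) \leq \ell$ is already given by Theorem \ref{teorema:bounds_on_t_nu}, so the content of the theorem is the reverse inequality $\nu(W_+(L,\underline{t})) \geq \ell$ whenever $t_i = \text{tb}(\mathcal{L}_i)$ for some Legendrian representative $\mathcal{L}$ of $L$.

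My strategy is to construct a specific diagram $D_+$ of $W_+(L,\underline{t})$ from the Legendrian data and then apply the combinatorial bound of Theorem \ref{teorema:bound_combinatorio}. Starting from a front projection of $\mathcal{L}$, the Legendrian pushoff produces a parallel copy of each component $\mathcal{L}_i$ whose linking number with $\mathcal{L}_i$ equals the contact framing, namely $\text{tb}(\mathcal{L}_i) = t_i$. Inserting a positive clasp in each component of this 2-cable yields a diagram $D_+$ of $W_+(L,\underline{t})$ realizing exactly the required framing $\underline{t}$. The crucial feature of this construction is that the Legendrian condition forces every ``extra'' crossing created by doubling a right cusp and by inserting the clasps to be positive.

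One then computes $w(D_+)$, $O(D_+)$, and $s_+(D_+)$ in terms of the data of the front projection (writhe and numbers of right and left cusps of each component), and uses the relation $t_i = w(F_i) - c(F_i)$ to verify that
\[ \frac{w(D_+) - O(D_+) + 2 s_+(D_+) + \ell - 2}{2} = \ell. \]
Combined with Theorem \ref{teorema:bound_combinatorio}, this gives the required inequality $\nu(W_+(L,\underline{t})) \geq \ell$, and together with the upper bound finishes the proof.

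The main obstacle is the careful bookkeeping of these combinatorial quantities, in particular keeping track of how the positive crossings created by doubling cusps and by inserting clasps interact with the Seifert graph structure of $D_+$ to yield the precise equality above. An alternative that bypasses some of this computation is to note that the construction above naturally produces $W_+(L,\underline{t})$ as the closure of a quasi-positive braid on $2n$ strands with $2n+\ell$ bands (where $n$ is the braid index of a Legendrian braid form of $\mathcal{L}$), so that Theorem \ref{teo:quasi-pos} directly yields $\nu(W_+(L,\underline{t})) = (b-d+\ell)/2 = \ell$.
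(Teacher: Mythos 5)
Your proposal has the right key idea—Legendrian $\mathrm{tb}$-framing plus positive clasps produces a (strongly) quasi-positive link—but neither of your two routes is fully carried out, and one of them quietly hides the real content of the theorem.

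Route (a), the direct computation of the combinatorial bound of Theorem~\ref{teorema:bound_combinatorio} on the doubled front $D_+$, is not verified, and you acknowledge this yourself. It is also riskier than it looks: the front of $\mathcal L$ can have negative crossings, each of which becomes four negative crossings in the cable, and the clasp reverses orientation on one parallel copy, which changes the Seifert circle structure. Thus $w(D_+)$, $O(D_+)$, and especially $s_+(D_+)$ depend on the front in a nontrivial way, and the identity $w(D_+) - O(D_+) + 2s_+(D_+) = \ell + 2$ is a genuine claim that would need a proof, not just ``bookkeeping.''

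Route (b) is the better one, and it is in fact a cleaner variant of the paper's argument, but as stated it asserts without justification that $W_+(L,\underline t)$ is the closure of a quasi-positive braid on $2n$ strands with $2n+\ell$ bands. What you actually need, and what Rudolph's work (\cite{Rudolph1,Rudolph2}, which the paper cites) supplies, is that $W_+(L,\underline t)$ bounds a quasi-positive surface $F$ which is a disjoint union of $\ell$ once-punctured tori: the annulus on the Legendrian $\mathrm{tb}$-pushoff is quasi-positive, and a positive Hopf plumbing converts each annulus into the clasped double. From $\chi(F) = -\ell$ and $\chi(F) = d-b$ for the Bennequin surface of a quasi-positive braid presentation, you get $b-d = \ell$, and then Theorem~\ref{teo:quasi-pos} gives $\nu(W_+(L,\underline t)) = (b-d+\ell)/2 = \ell$. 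The paper does \emph{not} finish via Theorem~\ref{teo:quasi-pos}: it instead embeds $F$ into a minimal Seifert surface $G$ of a large torus knot $T_{m,n}$ (again by Rudolph), takes $\Sigma = G \setminus \mathring F$ as a genus-$(g(G)-\ell)$ connected cobordism from $W_+(L,\underline t)$ to $T_{m,n}$, and applies Proposition~\ref{proposition:slice_genus_bound} together with $\nu(T_{m,n}) = g(G)$. Your route (b) trades the torus-knot cobordism for an appeal to Theorem~\ref{teo:quasi-pos}, which is a legitimate and arguably more economical finish, but only once the Euler characteristic computation replacing your unjustified strand/band count is put in. The Rudolph-style quasi-positivity of the double is the load-bearing ingredient in both arguments; your phrase ``the Legendrian condition forces every extra crossing to be positive'' gestures at it but does not establish the full statement you need.
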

\begin{proof}
 Let us consider a surface $F'$ such that the Legendrian link $\mathcal L$ in $\mathbb S^3$, equipped with the contact structure $\xi_{\text{st}}$, is embedded in $F'$ and $\text{tb}(\mathcal L)$ coincides with the Seifert framing induced by $F'$. 
 This means that the $(t_1,...,t_{\ell})$-twisted double of the link $L$ can be embedded in $F'$ as the boundary of a collar neighbourhood of $\mathcal L$. Let us call $F''\subset F'$ this neighbourhood. Moreover, we can change $F''$ by positive Hopf plumbings in such a way that the new surface $F$ has $W_+(L,t_1,...,t_\ell)$ as boundary.
 
 From the work of Rudolph \cite{Rudolph1,Rudolph2}, and thanks to our choice of $(t_1,...,t_{\ell})$, we can assume that $F''$ is a quasi-positive surface. Therefore, since $F$ it is obtained from $F''$ through positive Hopf plumbings, also $F$ is a quasi-positive surface. This also implies that $W_+(L,t_1,...,t_\ell)$ is a strongly quasi-positive link.
 
Now, possibly after performing more positive plumbings, the surface $F$ can be seen as a subsurface of a minimal Seifert surface $G$ of a torus knot $T_{m,n}$, for $m,n$ sufficiently big. (See \cite{Rudolph1,Rudolph2}.)
 The surface $\Sigma=G\setminus\mathring F$ is a connected cobordism between $W_+(L,t_1,...,t_\ell)$ and $T_{m,n}$ (this is basically the same argument used in \cite{LivingstonNaik}); its genus can be computed from the fact that
 $\chi(F)+\chi(\Sigma)=\chi(G)$. Hence, we have 
 \begin{equation}
  \label{torus}
  (-\ell)+(1-2g(\Sigma)-\ell)=1-2g(G)\:,
 \end{equation} 
because $F$ is by construction the union of $\ell$ tori with a disk removed, and $\Sigma$ and $G$ are both connected, which gives
$$g(\Sigma)=g(G)-\ell\:.$$
Proposition \ref{proposition:slice_genus_bound} and Proposition \ref{corollary:bound_on_the_slice_genus} imply that
\[ \nu(T_{m,n})\leq\nu(W_+(L,t_1,...,t_\ell))+g(\Sigma)=\nu(W_+(L,t_1,...,t_\ell))+g(G)-\ell\:,\]
which, in turn, gives 
\[ \ell\leq\nu(W_+(L,t_1,...,t_\ell))\:,\]
since $\nu(T_{m,n})=g(G)$ (cf. Corollary \ref{corollary:slice_genus_pos_torus_links}).
Now, the statement follows directly from Theorem \ref{teorema:bounds_on_t_nu}. 
\end{proof}
The last theorem, together with Proposition \ref{prop:symmetry}, immediately implies the following corollary.
\begin{cor}
 \label{cor:maximum}
 Let $L$ be a link. For every $\underline t\in\Z^\ell$ as in the hypotheses of Theorem \ref{teo:maximum}, we have that $\overline{F}_\nu(L)(\underline t)=0$.
\end{cor}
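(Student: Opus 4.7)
The plan is a one-line chain: invoke Theorem \ref{teo:maximum} to produce the hypothesis of Proposition \ref{prop:symmetry}, then read off the conclusion.

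More precisely, fix an $\ell$-component link $L$ and a tuple $\underline t=(t_1,\dots,t_\ell)\in\Z^\ell$ arising from a Legendrian representative $\mathcal L$ of $L$ via $t_i=\mathrm{tb}(\mathcal L_i)$. Theorem \ref{teo:maximum} gives
\[ F_\nu(L)(\underline t) = \ell. \]
This is exactly the trigger hypothesis $F_\nu(L)(\underline m)=\ell$ of the first half of Proposition \ref{prop:symmetry} (with $\underline m=\underline t$). Applying that proposition yields $\overline F_\nu(L)(\underline t)=0$, which is the claim.

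There is no real obstacle: both ingredients are already in place, and the only thing to check is that the hypothesis of Theorem \ref{teo:maximum} is identical (up to notation) to the hypothesis of Corollary \ref{cor:maximum}, so that the output of the former plugs into Proposition \ref{prop:symmetry} without any modification. The corollary is therefore just the combination of the ``maximum is attained'' statement for $F_\nu$ with the symmetry-type relation pairing $F_\nu$ and $\overline F_\nu$ at that maximum.
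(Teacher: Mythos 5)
Your proposal is correct and is exactly the paper's argument: the paper deduces the corollary by combining Theorem \ref{teo:maximum} (which gives $F_\nu(L)(\underline t)=\ell$) with the first implication of Proposition \ref{prop:symmetry}.
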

In \cite{LivingstonNaik} it was proved that, in the case of knots, the functions $F_{\nu}$ and $\overline F_{\nu}$ also assume the minimal value. That is, we have the following proposition.
\begin{prop}[Livingston and Naik,  \cite{LivingstonNaik}]
 If $K$ is a knot then there exists an integer $t$ such that $$\nu(W_+(K,t))=0\:\:\:\:\text{and}\:\:\:\:\nu(W_-(K,t))=-1\:.$$
\end{prop}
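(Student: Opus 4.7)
The plan is to exhibit a single integer $t$ at which $\overline F_\nu(K)(t)$ attains its minimum $-1$, and then use a band inequality to force $F_\nu(K)(t)=0$ at that same $t$.

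First, I would apply Theorem \ref{teo:maximum} to the mirror knot $K^*$: choosing a Legendrian representative $\mathcal{K}^*$ of $K^*$ in $(\mathbb{S}^3,\xi_{\mathrm{st}})$ and setting $m=\mathrm{tb}(\mathcal{K}^*)$, we obtain $F_\nu(K^*)(m)=1$. Next, I would invoke the symmetry identity $\overline F_\nu(K)(t)=-F_\nu(K^*)(-t)$ recorded just before Proposition \ref{prop:symmetry}, which follows from the knot-level antisymmetry $\nu(-K^*)=-\nu(K)$ together with the fact that the mirror image of $W_+(K,t)$ is isotopic to $W_-(K^*,-t)$. Setting $t=-m$ gives
\[\overline F_\nu(K)(-m)=-F_\nu(K^*)(m)=-1.\]

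Finally, I would use the two band moves of Figure \ref{figura:bands_doubles}, which at the level of knots furnish the inequality $|F_\nu(K)(t)-\overline F_\nu(K)(t)|\leq 1$ for every $t$ (this estimate is already derived inside the proof of Proposition \ref{prop:symmetry}). Applied at $t=-m$, it yields $F_\nu(K)(-m)\leq\overline F_\nu(K)(-m)+1=0$, and combined with the lower bound $F_\nu(K)(-m)\geq 0$ from Theorem \ref{teorema:bounds_on_t_nu} this forces $F_\nu(K)(-m)=0$. Hence $t=-m$ realises both equalities simultaneously.

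The main obstacle is the bookkeeping in the symmetry identity: one must check carefully that taking the mirror (and reversing orientation where needed) turns the positively clasped $t$-twisted Whitehead double of $K$ into the negatively clasped $(-t)$-twisted double of $K^*$, so that the knot-level antisymmetry $\nu(-J^*)=-\nu(J)$ transfers cleanly to a relation between $F_\nu$ and $\overline F_\nu$. Once this identification is in place, the argument is the short chain of inequalities above, with the band-move bound playing the crucial role of coupling $F_\nu(K)$ and $\overline F_\nu(K)$ at the same value of $t$.
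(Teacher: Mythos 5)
The paper does not actually prove this proposition; it simply attributes the result to Livingston and Naik and cites it. Your proposal therefore supplies what the paper leaves implicit: a self-contained derivation from the paper's own machinery. The argument is correct. Theorem~\ref{teo:maximum}, specialised to the mirror knot $K^*$ (where $\ell=1$), gives $F_\nu(K^*)(m)=1$ at $m=\mathrm{tb}(\mathcal K^*)$; the symmetry relation $\overline F_\nu(K)(t)=-F_\nu(K^*)(-t)$, stated in the paper just before Proposition~\ref{prop:symmetry}, then yields $\overline F_\nu(K)(-m)=-1$; the two-band-move estimate $\lvert\nu(W_+^\prime)-\nu(W_-^\prime)\rvert\leq 1$ (established inside the proof of Proposition~\ref{prop:symmetry} via Figure~\ref{figura:bands_doubles}, and specialising for a knot to $\lvert F_\nu(K)(t)-\overline F_\nu(K)(t)\rvert\leq 1$) together with the lower bound $F_\nu(K)(-m)\geq 0$ from Theorem~\ref{teorema:bounds_on_t_nu} then forces $F_\nu(K)(-m)=0$. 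This mirrors the original Livingston--Naik strategy (quasipositive Bennequin-type bound for the maximum of the positive double, plus a coupling between the positive and negative doubles), but phrased uniformly through the link-level tools developed here, which is arguably the cleaner route in the context of this paper. One small point worth flagging, which however lies in the paper's own claim rather than in your argument: the identity $\overline F_\nu(K)(t)=-F_\nu(K^*)(-t)$ uses $\nu(J^*)=-\nu(J)$ for knots, which is stated via $\nu(-J^*)=-\nu(J)$ together with the tacit orientation-independence of $\nu$ on knots; it is worth convincing yourself that reversing orientation is harmless at the level of Whitehead doubles, which is the bookkeeping concern you correctly identify.
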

In general, we do not have the same result in the case of multi-component links. That is to say, we cannot prove that any of the functions we introduced reach the minimum. However, we can prove that some of them are non-constant.
\begin{prop}
 \label{prop:step}
 For every slice-torus link invariant $\nu$ and $\ell$-component link $L$ there exists an $\ell$-tuple $(t_1,...,t_{\ell})$ such that 
 \[F_{\nu}(L)(t_1,...,t_i,...,t_{\ell})\in(\ell-1,\ell\:]\:\:\:\:\:\text{and}\:\:\:\:\:F_{\nu}(L)(t_1,...,t_i+1,...,t_{\ell})\in(\ell-2,\ell-1\:]\]
 for some $i$, and the same holds true for $\overline F_{\nu}(L)$.
\end{prop}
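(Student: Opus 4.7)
The plan is to combine two ingredients: (i) the maximum values $\ell$ of $F_\nu(L)$ and $0$ of $\overline F_\nu(L)$ are attained (Theorem \ref{teo:maximum} and Corollary \ref{cor:maximum}), and (ii) $F_\nu(L)$ and $\overline F_\nu(L)$ are non-increasing with unit-Lipschitz drop under an increase of any single coordinate (Theorem \ref{teo:decreasing}). It suffices to exhibit, in addition, a tuple where $F_\nu(L)\le\ell-1$ (resp.\ $\overline F_\nu(L)\le -1$); then travelling along a monotone lattice path between the two tuples forces the value to cross the threshold $\ell-1$ (resp.\ $-1$) at some single-coordinate step, producing the pair required by the proposition.

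To produce the second tuple for $F_\nu(L)$, I would apply Corollary \ref{cor:maximum} to the link $-L^*$, obtaining $\underline s\in\Z^\ell$ with $\overline F_\nu(-L^*)(\underline s)=0$. Since mirroring flips the sign of each clasp and of each twist, while simultaneously reversing the orientations of both strands at a crossing preserves its sign, one checks the identities
\[W_+(L,\underline t)^* = W_-(L^*,-\underline t)\qquad\text{and}\qquad -W_+(L,\underline t)^*=W_-(-L^*,-\underline t).\]
Setting $\underline b=-\underline s$ then yields
\[\nu\bigl(-W_+(L,\underline b)^*\bigr)=\overline F_\nu(-L^*)(\underline s)=0,\]
and Property (C) of Definition \ref{defn:slice_torus_link_invariant} immediately gives $F_\nu(L)(\underline b)\le\ell-1$. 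Analogously, applying Theorem \ref{teo:maximum} to $-L^*$ and using the identity $-W_-(L,\underline t)^*=W_+(-L^*,-\underline t)$, I would find $\underline b'\in\Z^\ell$ with $\nu(-W_-(L,\underline b')^*)=\ell$, whence Property (C) gives $\overline F_\nu(L)(\underline b')\le-1$.

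For the final threshold-crossing step, pick $\underline a\in\Z^\ell$ with $F_\nu(L)(\underline a)=\ell$ from Theorem \ref{teo:maximum}, and let $\underline c=\max(\underline a,\underline b)$ componentwise. Theorem \ref{teo:decreasing} gives $F_\nu(L)(\underline c)\le F_\nu(L)(\underline b)\le\ell-1$. Traversing a monotone path from $\underline a$ to $\underline c$ one coordinate at a time, the value of $F_\nu(L)$ starts at $\ell$ and ends at at most $\ell-1$, so at some step $\underline t\mapsto\underline t+e_i$ one has $F_\nu(L)(\underline t)\in(\ell-1,\ell]$ and $F_\nu(L)(\underline t+e_i)\le\ell-1$; the Lipschitz bound in Theorem \ref{teo:decreasing} ensures $F_\nu(L)(\underline t+e_i)>\ell-2$, so this step realises the desired pair. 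The same construction with $\underline a'$ from Corollary \ref{cor:maximum}, the tuple $\underline b'$ above, and thresholds $-1,-2$ handles $\overline F_\nu(L)$. The only delicate part is the careful bookkeeping of the mirror/orientation identities for Whitehead doubles, although this is routine given the conventions fixed in Figure \ref{figura:Whiteheadknot}.
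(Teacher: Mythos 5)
Your proof is correct and follows essentially the same route as the paper's: both arguments combine Theorem~\ref{teo:maximum}/Corollary~\ref{cor:maximum} with Property~(C) via the mirror identity $W_+(L,\underline t)^*=W_-(L^*,-\underline t)$ to produce a tuple where $F_\nu(L)\le\ell-1$, and then invoke Theorem~\ref{teo:decreasing} to locate the threshold-crossing step. The only difference is presentational: the paper phrases the first step as a proof by contradiction and compresses the final step into ``the claim follows from Theorem~\ref{teo:decreasing},'' whereas you spell out the monotone lattice-path argument (and are slightly more careful with the orientation-reversal in Property~(C), writing $-W_+(L,\underline t)^*$ where the paper tacitly uses the invertibility of twist-knot components to write $W_+(L,\underline t)^*$).
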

\begin{proof}
Suppose that $F_{\nu}(L)$ has values only in the interval $(\ell-1,\ell\:]$. Then, by Property (C) we have that \[\nu(W_+(L,\underline t)^*)=\nu(W_-(L^*,-\underline t))\leq-1\] for each $\underline t\in\Z^\ell$, but this contradicts Corollary \ref{cor:maximum}. The claim follows from Theorem \ref{teo:decreasing}. The case of $\overline F_{\nu}(L)$ is dealt with in the same way.
\end{proof}

\section{An example: the Hopf link}\label{section:example}

In this section we will give compute explicitly the functions $F_{\nu_{s}}$ and $\overline{F}_{\nu_{s}}$ for the Hopf link, where $\nu_{s}$ is the slice-torus link invariant associated to $s$. Notice that the fully clasped Whitehead doubles of the positive and negative Hopf links are isotopic, and thus the computation we achieve are valid both for $H_{+}$ and $H_{-}$. To lighten the notation, throughout the section we shall omit $H_{\pm}$ from the notation unless confusion may arise.

\subsection{Computations for general slice-torus link invariant}
Let $\nu$ be a slice-torus link invariant.
There are a few observations on $F_{\nu}(t_{1},t_{2})$ and $\overline{F}_{\nu}(t_{1},t_{2})$ which can be made, and allow us to partially compute these functions.

First, notice that exchanging the roles of the components of the Hopf link yields the same link. 
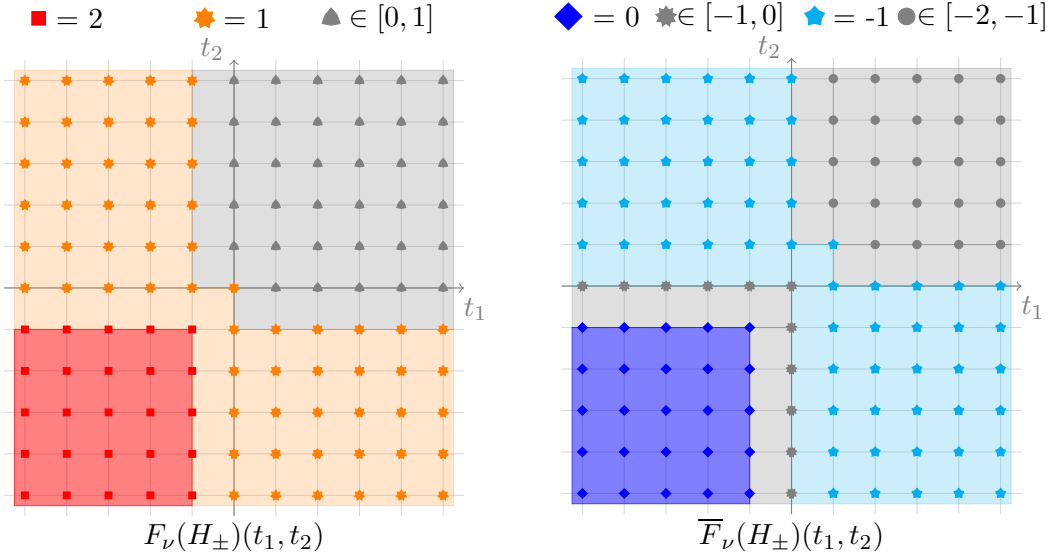
\begin{figure}[H]
    \centering

\begin{tikzpicture}[scale=.55]
\draw[opacity=.3, gray] (-5.5,-5.5) grid (5.5,5.5);
\draw[gray,  ->] (-5.5,0) -- (5.5,0);
\draw[gray, ->] (0,-5.5) -- (0,5.5);

\draw[fill, opacity=.2, orange] (-5.25,5.25) --(-5.25,-1) -- (-1,-1)--(-1,-5.25) --(5.25,-5.25) --(5.25,-1) --(0,-1)-- (0,0) -- (-1,0) --  (-1,5.25) -- cycle;
\draw[gray, fill, opacity = .25] (5.25,0) rectangle (0,-1);
\draw[gray, fill, opacity = .25] (0,5.25) rectangle (-1,0);
\draw[fill, opacity=.5, red] (-1,-5.25) rectangle (-5.25,-1);

\draw[fill, opacity=.25,gray] (0,0) rectangle (5.25,5.25);
\node[gray, above left] at (0,5.25) {$t_{2}$};
\node[gray, below right] at (5.25,0) {$t_{1}$};
\node[] at (0,-6) {$F_{\nu}(H_{\pm})(t_{1},t_{2})$};
\foreach \x in {-5,...,-1}
  {
  \node[draw,star,star points=3, gray,fill,scale=.25] at (-\x,0) {};
  \node[draw,star,star points=3, gray,fill,scale=.25] at (0,-\x) {};
  
  \foreach \y in {-5,...,-1}
  {
  \node[draw,rectangle, red,fill,scale=.35] at (\x,\y) {};
  \node[draw,star,star points=3, gray,fill,scale=.25] at (-\x,-\y) {};
  }
    \foreach \y in {0,...,5}
  {
  \node[draw,star,star points=7, orange,fill,scale=.25] at (\x,\y) {};
  \node[draw,star,star points=7, orange,fill,scale=.25] at (\y,\x) {};
  }
  }
  
\node[draw,star,star points=7, orange,fill,scale=.25] at (0,0) {};

\node[draw,star,star points=3,gray,fill,scale=.5, left] at (2.5,6.5) {};
\node[right] at (2.5,6.5) {$\in [0,1]$};
\node[draw,star,star points=7, orange,fill,scale=.5, left] at (-.5,6.5) {};
\node[right] at (-.5,6.5) {= 1};
\node[draw,rectangle, red,fill,scale=.7, left] at (-4.5,6.5) {};
\node[right] at (-4.5,6.5) {= 2};
\end{tikzpicture}
\hspace{.5cm}
\begin{tikzpicture}[scale=.55]
\draw[opacity=.3, gray] (-5.5,-5.5) grid (5.5,5.5);
\draw[gray,  ->] (-5.5,0) -- (5.5,0);
\draw[gray, ->] (0,-5.5) -- (0,5.5);

\draw[gray, fill, opacity = .25] (-5.25,0) rectangle (0,-1);
\draw[gray, fill, opacity = .25] (0,-5.25) rectangle (-1,-1);
\draw[gray, fill, opacity = .25] (5.25,0) rectangle (1,1);
\draw[gray, fill, opacity = .25] (0,5.25) rectangle (1,1);
\draw[fill, opacity=.2, cyan] (-5.25,5.25) --(-5.25,0) -- (0,0) --(0,-5.25) --(5.25,-5.25) --(5.25,0)--(1,0) -- (1,1) --(0,1) -- (0,5.25) -- cycle;

\draw[fill, opacity=.5, blue] (-1,-5.25) rectangle (-5.25,-1);

\draw[fill, opacity=.25,gray] (1,1) rectangle (5.25,5.25);
\node[gray, above left] at (0,5.25) {$t_{2}$};
\node[gray, below right] at (5.25,0) {$t_{1}$};
\node[] at (0,-6) {$\overline{F}_{\nu}(H_{\pm})(t_{1},t_{2})$};

\foreach \x in {-5,...,-1}
  {
  \node[draw,star,star points=9, gray,fill,scale=.25]  at (\x,0) {};
  \node[draw,star, star points=9, gray,fill,scale=.25]  at (0,\x) {};
  
  \foreach \y in {-5,...,-1}
  {
  \node[draw,diamond, blue,fill,scale=.25] at (\x,\y) {};
  \ifthenelse{\x < -1}{\draw[fill,gray] (-\x,-\y) circle (.1);}{\ifthenelse{\y < -1}{\draw[fill,gray] (-\x,-\y) circle (.1);}{}}
  }
    \foreach \y in {0,...,5}
  {
  \node[draw,star, cyan,fill,scale=.25] at (-\x,-\y ) {};
  \node[draw,star, cyan,fill,scale=.25] at (-\y,-\x) {};
  }
  }

\node[draw,star, star points=9, gray,fill,scale=.25] at (0,0) {};
\node[draw,star, cyan,fill,scale=.25] at (1,1) {};

\draw[fill,gray] (2.75,6.5) circle (.2);
\node[right] at (2.75,6.5) {$\in[-2,-1]$};
\node[draw,star, star points=9, gray,fill,scale=.5] at (-3,6.5) {};
\node[right] at (-3,6.5) {$\in [-1,0] $};
\node[draw,star, cyan,fill,scale=.5, left] at (.75,6.5) {};
\node[right] at (.75,6.5) {= -1};
\node[draw,diamond, blue,fill,scale=.7, left] at (-5,6.5) {};
\node[right] at (-5,6.5) {= 0};
\end{tikzpicture}
\caption{Partial computations of the functions $F_{\nu}(H_{\pm})(t_{1},t_{2})$ and $\overline{F}_{\nu}(H_{\pm})(t_{1},t_{2})$ for an arbitrary slice-torus link invariant $\nu$.}
\label{figure:general_F_for_H_+}
\end{figure}
Thus, we obtain the following symmetry property
\[ F_{\nu}(H_\pm)(t_1,t_2)=F_{\nu}(H_\pm)(t_2,t_1),\qquad \forall (t_1,t_2) \in \mathbb{Z}^{2}.\] 

Furthermore, with the same reasoning as in the proof of Theorem \ref{teorema:bounds_on_t_nu}, it follows immediately that
\[\nu (W_{+}(\bigcirc, t_{i}))\leq F_{\nu}(t_{1},t_{2}) \leq \nu (W_{+}(\bigcirc, t_{i})) + 1 ,\qquad i \in \{ 1,2 \}.\]
Moreover, since $W_{+}(\bigcirc ) (-1) = T_{2,3}$ and $W_{+}(\bigcirc ) (0) = \bigcirc$, we have that
\[\nu (W_{+}(\bigcirc, t))=F_{\nu}(\bigcirc)(t) =\begin{cases} 1 & t \leq -1 \\ 0 & t \geq 0 \end{cases}, \]
for each $\nu$. Putting these facts together with Theorem \ref{teorema:bounds_on_t_nu}, and with the fact that the combinatorial bound (cf. Theorem \ref{teorema:bound_combinatorio}) is sharp in the case $(t_1,t_2)= (0,0)$, we obtain that
\[F_{\nu}(t_{1},t_{2}) = \begin{cases} 2 & (t_1,t_2) \in (-\infty,-1] \times (-\infty,-1] \\ 1 & (t_1,t_2)\ \text{or}\ (t_2,t_1) \in [-1,-\infty) \times (-\infty,0] \cup \{ (0,0) \}\\ \in [0,1] & \text{otherwise.} \end{cases}.\]
A similar, reasoning works with $\overline{F}_{\nu}$, with the only difference that the bound is sharp in $(1,1)$ and not in $(0,0)$. This lead us to the following
\[\overline{F}_{\nu}(t_{1},t_{2}) = \begin{cases} 0 & (t_1,t_2) \in (-\infty,-1] \times (-\infty,-1] \\ \in[-1,0] & (t_1,t_2)\ \text{or}\ (t_2,t_1) \in \{ 0 \} \times (-\infty,0]\\-1 & (t_1,t_2)\ \text{or}\ (t_2,t_1) \in [1,+\infty) \times (-\infty,0] \cup \{ (1,1)\}\\ \in[-2,-1] & \text{otherwise.} \end{cases}.\]
The information we gathered on the functions $F_{\nu}$ and $\overline{F}_{\nu}$ is summarized in Figure \ref{figure:general_F_for_H_+}.
\begin{remark}\label{rem:conctosplit}
This amount of information is already enough to distinguish the unlink with two components, and every disjoint union of two knots, from the Hopf link by applying Theorem \ref{teo:split}.
\end{remark}

\subsection{Computations with the $s$-invariant}

The $s$-invariant was introduced by Rasmussen (\cite{Rasmussen}), in the case of knots, and extended to links by Beliakova and Werhli (\cite{BeliakovaWehrli}). Let us say a few words on this invariant.

Fix a field $\mathbb{F}$, in \cite{Lee05} E. S. Lee introduced a link homology theory $H_{Lee}^{*}(\cdot,\mathbb{F})$, which is a deformation of Khovanov homology. The homology of this theory is pretty simple: given an oriented link diagram $D$ representing a link $L$ there is a set of cycles, called \emph{canonical generators}, whose homology classes generate $H_{Lee}^{*}(L, \mathbb{F})$. This set is indexed by the possible orientations of the underlying unoriented diagram (\cite[Theorem 5.1]{Lee05}). Moreover, the homological degree $h$ of each canonical generator is completely determined by the linking matrix of $L$. However, this theory has a natural (decreasing) filtration $\mathcal{F}_{*}$, called the \emph{quantum filtration}, which contains non-trivial information on concordance.

Let $D$ be an oriented link diagram. The set of the possible orientations of the underlying unoriented diagram shall be denoted by $\mathbb{O}(D)$, and the canonical generator associated to a given $o\in \mathbb{O}(D)$ shall be denoted by $\mathbf{v}_{o}(D;\mathbb{F}) \in C_{Lee}^{h(o,L)}(D,\mathbb{F})$. 

\begin{defin}[Rasmussen \cite{Rasmussen}, Beliakova-Wehrli \cite{BeliakovaWehrli}]
Let $D$ be an oriented link diagram representing an oriented link $L$. The \emph{Rasmussen-Beliakova-Wehrli} (\emph{RBW}) \emph{invariant} associated to $o\in \mathbb{O}(D)$ is the integer
\[ s(o,L; \mathbb{F}) = \frac{Fdeg\left([\mathbf{v}_{o}(D;\mathbb{F}) - \mathbf{v}_{-o}(D;\mathbb{F})\right]) - Fdeg\left([\mathbf{v}_{o}(D;\mathbb{F}) + \mathbf{v}_{-o}(D;\mathbb{F})]\right)}{2},\]
where $Fdeg$ indicates the filtered degree in $H_{TLee}^{\bullet}(L, \mathbb{F})$, and $-o$ denotes the opposite orientation with respect to $o$. If $o$ is exactly the orientation induced by $L$, we will omit $o$ from the notation and call $s(L;\mathbb{F})$ the \emph{$s$-invariant} or \emph{Rasmussen invariant} of $L$.
\end{defin}

\begin{remark}
The original definition of the RBW-invariants (\cite{BeliakovaWehrli,Rasmussen}) does not work over fields of characteristic $2$. However, using a twisted version of Lee theory, defined by Bar-Natan in \cite{BarNatan05cob}, one can extend the definition to characteristic $2$ (of course, the two theories give the same invariants if $char(\mathbb{F}) \ne 2$, see \cite{Mackaayturnervaz07}). With an abuse of notation we shall call these extended invariants RBW-invariants.
\end{remark}

Let $D$ be an oriented link diagram representing the oriented link $L$. It can be easily shown (see, for instance, \cite[Proposition 11]{Collari} and subsequent proof) that
\[s(L;\mathbb{F}) - 1 =Fdeg([\mathbf{v}_{-o_{D}}(D)]) = Fdeg([\mathbf{v}_{o_{D}}(D)]) = \max \{ Fdeg(x)\: \vert\: x\in [\mathbf{v}_{o_{D}}(D)]\},\]
where $o_{D}$ is the orientation of $D$, and the filtered degree $Fdeg(x)$ is defined as the maximal $j$ such that $x\in \mathcal{F}_{j}C_{Lee}^{*}(D,\mathbb{F})$. We shall make use of this alternative definition of $s$ to prove the following results, which allows one to compute $s$ in a number of cases.

\begin{prop}
\label{teo:s}
Let $L$ be an oriented link, $D$ an oriented diagram representing $L$ and $\widetilde{L}$ the unoriented link underlying $L$. If for each $o\in \mathbb{O}(D) $ such that $\mathbf{v}_{o}(D)\in C_{Lee}^{0}(D,\mathbb{F})$ we have that $L$ is isotopic to $(\widetilde{L},o)$, then
\[ s(L;\mathbb{F}) =1+ \min\left\{ j\in \mathbb{Z}\: \big\vert\: Gr_{\mathcal{F}}^{j}H_{Lee}^{0}(L;\mathbb{F}) \ne 0 \right\},\]
where $Gr^*_\mathcal{F}$ indicates the associated graded object corresponding to the quantum filtration.
\end{prop}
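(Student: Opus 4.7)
The strategy is to show that, under the hypothesis, every canonical generator whose class lies in $H^{0}_{Lee}(L;\mathbb{F})$ has filtered degree exactly $s(L;\mathbb{F})-1$, and then to deduce that this value is the minimum of the filtered degrees of nonzero classes in $H^{0}_{Lee}(L;\mathbb{F})$.

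Set $\mathbb{O}_{0}(D)=\{o\in\mathbb{O}(D):\mathbf{v}_{o}(D;\mathbb{F})\in C^{0}_{Lee}(D;\mathbb{F})\}$. By Lee's structure theorem the classes $\{[\mathbf{v}_{o}(D;\mathbb{F})]\}_{o\in\mathbb{O}_{0}(D)}$ form a basis of $H^{0}_{Lee}(L;\mathbb{F})$. For every $o\in\mathbb{O}_{0}(D)$ the hypothesis provides an isotopy $(\widetilde{L},o)\simeq L$, hence $s((\widetilde{L},o);\mathbb{F})=s(L;\mathbb{F})$; applying the formula displayed just above the proposition to the oriented diagram $(D,o)$ yields $Fdeg_{o}([\mathbf{v}_{o}(D;\mathbb{F})])=s(L;\mathbb{F})-1$, where $Fdeg_{o}$ denotes the filtered degree with respect to the quantum filtration induced on the Lee complex of $D$ by the orientation $o$. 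A direct bookkeeping of $n_{+}$ and $n_{-}$ shows that the quantum shift distinguishing the filtration induced by $o$ from the one induced by $o_{D}$ is a (fixed nonzero) multiple of the linking number $\mathrm{lk}(E_{o},L\setminus E_{o})$, where $E_{o}\subset L$ is the sub-link along which $o$ and $o_{D}$ disagree; on the other hand, the condition $\mathbf{v}_{o}(D;\mathbb{F})\in C^{0}_{Lee}(D;\mathbb{F})$ is precisely the vanishing of $h(o,L)$, which is itself twice this same linking number. The shift therefore vanishes and $Fdeg_{o_{D}}([\mathbf{v}_{o}(D;\mathbb{F})])=Fdeg_{o}([\mathbf{v}_{o}(D;\mathbb{F})])=s(L;\mathbb{F})-1$ for every $o\in\mathbb{O}_{0}(D)$.

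To finish, any nonzero class $[\alpha]\in H^{0}_{Lee}(L;\mathbb{F})$ decomposes as $[\alpha]=\sum_{o\in\mathbb{O}_{0}(D)}c_{o}[\mathbf{v}_{o}(D;\mathbb{F})]$; since each summand lies in $\mathcal{F}_{s(L;\mathbb{F})-1}H^{0}_{Lee}(L;\mathbb{F})$ and the filtration is decreasing, so does $[\alpha]$, whence $Fdeg([\alpha])\geq s(L;\mathbb{F})-1$. This inequality is realised by $[\mathbf{v}_{o_{D}}(D;\mathbb{F})]$ itself, so $\min\{j:Gr^{j}_{\mathcal{F}}H^{0}_{Lee}(L;\mathbb{F})\neq 0\}=s(L;\mathbb{F})-1$, which is the claim.

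\textbf{Main obstacle.} The delicate step is the comparison $Fdeg_{o_{D}}=Fdeg_{o}$ on $\mathbf{v}_{o}(D;\mathbb{F})$ under the condition $h(o,L)=0$: one has to carry out the orientation-dependent quantum shift computation in the Lee complex and match the outcome with the standard identity $h(o,L)=2\,\mathrm{lk}(E_{o},L\setminus E_{o})$. Once this compatibility is in place, the remainder of the argument amounts to a routine manipulation with decreasing filtrations on finite-dimensional vector spaces.
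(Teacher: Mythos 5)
Your proof is correct and follows essentially the same route as the paper's: the classes $[\mathbf{v}_o]$ with $o\in\mathbb{O}_0(D)$ form a basis of $H^0_{Lee}(L;\mathbb{F})$, they all share the filtered degree $s(L;\mathbb{F})-1$, and the minimal filtered degree over a basis pins down the lowest nontrivial grading of the associated graded. Your treatment is in fact a bit more careful than the paper's, which simply asserts that isotopy forces all the $[\mathbf{v}_o]$ to have the same filtered degree; you make explicit why the orientation-dependent quantum shift between $\mathcal{F}_{o}$ and $\mathcal{F}_{o_D}$ vanishes (both it and $h(o)$ are nonzero multiples of $\mathrm{lk}(E_o,L\setminus E_o)$), which is precisely the point the paper glosses over.
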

\begin{proof}
The homology classes of the canonical generators associated to the orientations satisfying the above hypothesis, generate $H^{0}_{Lee}(L;\mathbb{F})$. Since $L$ is isotopic to $(\widetilde{L},o)$, for all $o$'s such that $\mathbf{v}_{o}(D)\in C_{Lee}^{0}(D,\mathbb{F})$, it follows that all the corresponding $[\mathbf{v}_{o}]$'s have the same filtered degree. The set of such $[\mathbf{v}_{o}]$'s  is a basis of $H_{TLee}^{0}(L;\mathbb{F})$, and the minimal filtered degree of the elements of a basis of a filtered vector space  does not depend on the choice of the basis (this fact is easy to prove, but the lazy reader can consult, for example, \cite[Corollary A.6]{Thesis1}). The claim follows immediately from the fact that the minimal degree of the elements of a filtered basis\footnote{A \emph{filtered basis} of a filtered vector space $V$ is a basis $\{ e_{i} \}_{i=1,...,k}$ for $V$ such that the direct sum filtration on $V = \bigoplus_{i=1}^{k} \mathbb{F}\left\langle e_{i}\right\rangle$ coincides with the original filtration (where the filtration on $\mathbb{F}\left\langle e_{i}\right\rangle$ is understood). It is clear that every filtered vector space admits a filtered basis, and this choice gives an isomorphism between $V$ and $Gr^{*}V$.} of a filtered vector space $V$ is the minimal degree where $Gr^*V$ is non trivial.
\end{proof}

\begin{remark}\label{remark:noPardon}
The proof of the previous proposition does not imply anything about the support of the associated graded object (called also the Pardon invariant), aside the RBW invariant being the lowest non-trivial quantum degree (plus one) in homological degree $0$. In particular, we did not prove that $Gr_{\mathcal{F}}^{*}H_{Lee}^{0}(L;\mathbb{F})$ is supported only in two degrees, which is false as we shall see in the examples.
\end{remark}
Using Proposition \ref{teo:s} and the knight move pairings (\cite{Thesis1,Lee05}) we can determine the $s$-invariant from Khovanov homology in many cases, including the Whitehead doubles of the Hopf link. An essential data to perform a computation using the knight move pairing is the homological degrees on which Lee homology is supported. These are completely determined by the linking matrix. In our case, since the linking matrix of fully clasped Whitehead doubles is always vanishing, Lee homology is always concentrated in homological degree $0$.

\begin{remark}
We remark that in general the knight move pairing is not sufficient to compute the associated graded object to Lee homology. However, if the field is of characteristic different from $2$, and the pairing is unique, this method can be used (see \cite[Chapter 2 \& Appendix B]{Thesis1} for more details).
\end{remark}

In order to avoid technical difficulties we shall work with $\mathbb{F} = \mathbb{Q}$. In this case we are able complete the computation started at the beginning of the section. 

We recall (see Example \ref{ex:first}) that the Rasmussen invariant is not a slice-torus link invariant by itself, but we need to re-scale it and add a correction term; more specifically, recall that the slice-torus link invariant associated to the Rasmussen invariant is
\[\nu_{s}(L) = \dfrac{s(L)+\ell-1}{2}\in \mathbb{Z}\:,\] 
where $L$ is an $\ell$-component link.  Which means that the values that $s(W_+(H_{\pm},t_{1}, t_{2}))$ may assume are $3$, $1$ and $-1$. The KnotTheory package of \textsf{Mathematica} (\cite{Mathematica}) was used to compute the Khovanov homology, and the results of these computations are collected in the appendix.

Let us start with the link $W_{+}(H_\pm,0,1)$. In this case, the result of our computations is the following (cf. Table \ref{figure:Kh+(0,1)}) \[\dim\:\text{Gr}_{\mathcal{F}}^{j}H_{Lee}^{0}(W_+(H_\pm,0,1))=\left\{\begin{aligned}
&2\:\:\:\:\:\text{if }j=0,2\\
&0\:\:\:\:\:\text{otherwise}\end{aligned}\right.\:.\]
It follows that $s(W_{+}(0,1)) = 1$, and thus
\[F_{\nu_{s}}(H_{\pm})(0,1) = \frac{1 + 2 - 1}{2} = 1.\]
Notice that the link $W_+(H_\pm,0,1)$ is \emph{pseudo-thin}, that is $\text{Gr}_{\mathcal{F}}^{\bullet}H_{Lee}^{0}$ is supported in two points (see \cite{Cavallo4}). 

Now, let us consider the links $W_{+}(H_\pm ,1,1)$ and $W_{+}(H_\pm ,0,2)$. We have that (cf. Table \ref{figure:Kh+(0,2)})
\[\dim\:\text{Gr}_{\mathcal{F}}^{j}H_{Lee}^{0}(W_+(H_\pm,0,2))=
\dim\:\text{Gr}_{\mathcal{F}}^{j}H_{Lee}^{0}(W_+(H_\pm,1,1))=\left\{\begin{aligned}
&2\:\:\:\:\:\text{if }j=0\\
&1\:\:\:\:\:\text{if }j=-2,2\\
&0\:\:\:\:\:\text{otherwise}\end{aligned}\right.\:.\]
It follows that $F_{\nu_{s}}(H_{\pm})(1,1) = F_{\nu_{s}}(H_{\pm})(1,2) = 0$.
Notice that neither of these links is pseudo-thin (cf. Remark \ref{remark:noPardon}).

Thanks to the non-increasing property proved in Theorem \ref{teo:decreasing}, $F_{\nu_s}$ is completely determined. The result of our computations is shown in the left hand side of Figure \ref{figure:F_s}.
\begin{figure}[h]
    \centering

\begin{tikzpicture}[scale=.6]
\draw[opacity=.3, gray] (-5.5,-5.5) grid (5.5,5.5);
\draw[gray,  ->] (-5.5,0) -- (5.5,0);
\draw[gray, ->] (0,-5.5) -- (0,5.5);

\draw[fill, opacity=.2, orange] (-5.25,5.25) --(-5.25,-1) -- (-1,-1)--(-1,-5.25) --(5.25,-5.25) --(5.25,-1) --(0,-1)-- (0,0) -- (-1,0) --  (-1,5.25) -- cycle;
\draw[yellow, fill, opacity = .25] (5.25,0) rectangle (1,-1);
\draw[fill, opacity=.2, orange] (0,1) rectangle (-1,0);
\draw[yellow, fill, opacity = .25] (0,5.25) rectangle (-1,1);
\draw[fill, opacity=.2, orange] (1,0) rectangle (0,-1);
\draw[fill, opacity=.5, red] (-1,-5.25) rectangle (-5.25,-1);

\draw[fill, opacity=.25,yellow] (0,0) rectangle (5.25,5.25);
\node[gray, above left] at (0,5.25) {$t_{2}$};
\node[gray, below right] at (5.25,0) {$t_{1}$};
\node[] at (0,-6) {$F_{\nu_s}(H_{\pm})(t_{1},t_{2})$};

\foreach \x in {-5,...,-2}
 {
 \node[draw,star,star points=3, yellow,fill,scale=.25] at (-\x,0) {};
 \node[draw,star,star points=3, yellow,fill,scale=.25] at (0,-\x) {};
 }

\foreach \x in {-5,...,-1}
  {

  \foreach \y in {-5,...,-1}
  {
  \node[draw,rectangle, red,fill,scale=.35] at (\x,\y) {};
  \node[draw,star,star points=3, yellow,fill,scale=.25] at (-\x,-\y) {};
  }
    \foreach \y in {0,...,5}
  {
  \node[draw,star,star points=7, orange,fill,scale=.25] at (\x,\y) {};
  \node[draw,star,star points=7, orange,fill,scale=.25] at (\y,\x) {};
  }
  }
  
\node[draw,star,star points=7, orange,fill,scale=.25] at (0,0) {};
\node[draw,star,star points=7, orange,fill,scale=.25] at (1,0) {};
\node[draw,star,star points=7, orange,fill,scale=.25] at (0,1) {};

\node[draw,star,star points=3,yellow,fill,scale=.5, left] at (2.5,6.5) {};
\node[right] at (2.5,6.5) {= 0};
\node[draw,star,star points=7, orange,fill,scale=.5, left] at (-.5,6.5) {};
\node[right] at (-.5,6.5) {= 1};
\node[draw,rectangle, red,fill,scale=.7, left] at (-4.5,6.5) {};
\node[right] at (-4.5,6.5) {= 2};
\end{tikzpicture}
\hspace{.5cm}
\begin{tikzpicture}[scale=.6]
\draw[opacity=.3, gray] (-5.5,-5.5) grid (5.5,5.5);
\draw[gray,  ->] (-5.5,0) -- (5.5,0);
\draw[gray, ->] (0,-5.5) -- (0,5.5);

\draw[teal, fill, opacity = .25] (5.25,0) rectangle (1,1);
\draw[teal, fill, opacity = .25] (0,5.25) rectangle (1,1);
\draw[fill, opacity=.2, cyan] (-5.25,5.25) --(-5.25,0) -- (0,0) --(0,-5.25) --(5.25,-5.25) --(5.25,0)--(1,0) -- (1,1) --(0,1) -- (0,5.25) -- cycle;

\draw[fill, opacity=.5, blue] (0,-5.25) rectangle (-5.25,0);

\draw[fill, opacity=.25,teal] (1,1) rectangle (5.25,5.25);
\node[gray, above left] at (0,5.25) {$t_{2}$};
\node[gray, below right] at (5.25,0) {$t_{1}$};
\node[] at (0,-6) {$\overline{F}_{\nu_s}(H_{\pm})(t_{1},t_{2})$};

\foreach \x in {-5,...,-1}
  {
  \node[draw,diamond,blue,fill,scale=.25]  at (\x,0) {};
  \node[draw,diamond,blue,fill,scale=.25]  at (0,\x) {};
  
  \foreach \y in {-5,...,-1}
  {
  \node[draw,diamond, blue,fill,scale=.25] at (\x,\y) {};
  \ifthenelse{\x < -1}{\draw[fill,teal] (-\x,-\y) circle (.1);}{\ifthenelse{\y < -1}{\draw[fill,teal] (-\x,-\y) circle (.1);}{}}
  }
    \foreach \y in {0,...,5}
  {
  \node[draw,star, cyan,fill,scale=.25] at (-\x,-\y ) {};
  \node[draw,star, cyan,fill,scale=.25] at (-\y,-\x) {};
  }
  }

\node[draw,diamond,blue,fill,scale=.25] at (0,0) {};
\node[draw,star, cyan,fill,scale=.25] at (1,1) {};

\draw[fill,teal] (3.75,6.5) circle (.2);
\node[right] at (3.8,6.5) {= -2};
\node[draw,star, cyan,fill,scale=.5, left] at (-.5,6.5) {};
\node[right] at (-.5,6.5) {= -1};
\node[draw,diamond, blue,fill,scale=.7, left] at (-5,6.5) {};
\node[right] at (-5,6.5) {= 0};
\end{tikzpicture}
\caption{Computations of the functions $F_{\nu_s}(H_{\pm})(t_{1},t_{2})$ and $\overline{F}_{\nu_s}(H_{\pm})(t_{1},t_{2})$, where $\nu_s$ is the slice-torus link invariant associated to the Rasmussen link invariant.}
\label{figure:F_s}
\end{figure}
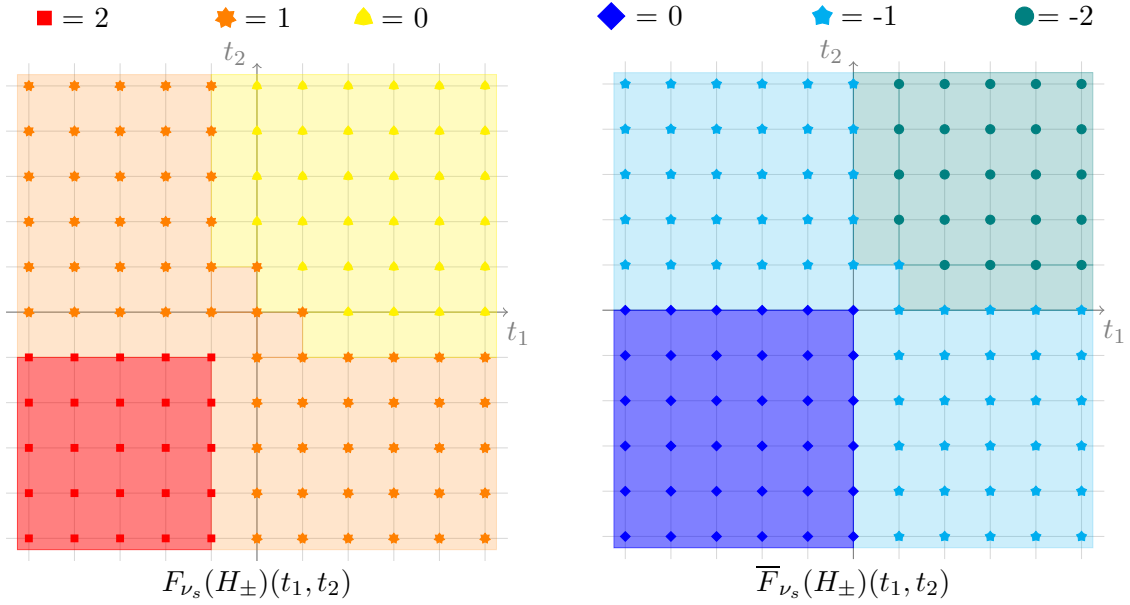

Now, let us turn to the computation of the function $\overline F_{\nu_s}$.
We start by computing the associated graded object to Lee homology for the link $W_-(H_\pm,0,0)$ which yields the following result (cf. Table \ref{figure:Kh-(1,2)}, left)
\[\dim\:\text{Gr}_{\mathcal{F}}^{j}H_{Lee}^{0}(W_-(H_\pm,0,0))=\left\{\begin{aligned}
&2\:\:\:\:\:\text{if }j=-2,0\\
&0\:\:\:\:\:\text{otherwise}\end{aligned}\right.\:.\]
As a consequence, we have that $\overline{F}_{\nu_s}(0,0)=0$, and thus (by Theorem \ref{teo:decreasing}) it follows that $\overline{F}_{\nu_s} (0,t)=\overline{F}_{\nu_s} (t,0)=0$ for each $t\leq 0$.

Finally, we computed the associated graded object to Lee homology for the link $W_-(H_\pm,1,2)$ (cf. Table \ref{figure:Kh-(1,2)}, right), and we obtained that
\[\dim\:\text{Gr}_{\mathcal{F}}^{j}H_{Lee}^{0}(W_-(H_\pm,1,2))=\left\{\begin{aligned}
&2\:\:\:\:\:\text{if }j=-4\\
&1\:\:\:\:\:\text{if }j=-6,-2\\
&0\:\:\:\:\:\text{otherwise}\end{aligned}\right.\:.\]
Now, Proposition \ref{teo:s} impies that $s(W_-(H_\pm,1,2))=-5$, and thus $\overline{F}_{\nu_s}(1,2)  = -2$. This completes the computation of the function $\overline{F}_{\nu_s}$, which is summarized on the right hand side of Figure \ref{figure:F_s}.

\begin{remark}
Notice that while the invariants $t_{\nu}$ and $\overline{t}_{\nu}$ contain the same information, in the case of multi-component links the set of points where $F_{\nu}$ and $\overline{F}_{\nu}$ change their values are not related \emph{a priori}. In Figure \ref{figure:F_s}, we have an example of how the ``jumping loci'' of these the two functions are not trivially related.
\end{remark}

\section{Further examples}\label{section:examplesII}

In this section we explore two further examples: the split links and the link L8a9. By analysing the former example we are able to define a new obstruction for a link to be strongly concordant to a split link. With the latter example we shall see that the functions $F^\prime_{\nu}$ and $\overline{F}^\prime_{\nu}$ contain different information than the linking matrix and $\nu$. 

\subsection{Split links}
Let $L_{i}$ be an $\ell_{i}$-component link, for $i\in \{ 1,2\}$.
Since Whitehead doubling and disjoint union commute, Property (B) tells us that 
\[\nu(W_\pm(L_1\sqcup L_2,(\underline t,\underline s)))=\nu(W_\pm(L_1,\underline t))+\nu(W_\pm(L_2,\underline s))\] for each $\underline t\in\Z^{\ell_1}$ and $\underline s\in\Z^{\ell_2}$. Denote by $L$ the disjoint union of $L_{1}$ and $L_{2}$.
Proposition \ref{prop:step} implies that there exists a unit square in $\Z^{\ell_{1} + \ell_{2}}$ with vertices 
\[ (\underline{t}, \underline{s}),\ (\underline{t} + \underline{e}_{i}, \underline{s} ),\ (\underline{t}, \underline{s} + \underline{e}_{j}),\ \text{and}\ (\underline{t}   + \underline{e}_{i}, \underline{s} + \underline{e}_{j}),\] 
for some $i,\: j\in \{ 1,...,\ell_{1} + \ell_{2} \}$, where $\underline{e}_{i}$ denotes the vector (of the appropriate length) with $i$-th entry $1$ and all the other entries $0$, satisfying the following properties
\begin{itemize}
    \item[$\triangleright$] $F_{\nu}(L)$ assumes at least three different values on the vertices of the square;
    \item[$\triangleright$] the maximal and the minimal values of $F_{\nu}(L)$ on the square are attained exactly once; 
    \item[$\triangleright$] the maximal value of $F_{\nu}(L)$ on the square is equal to $\ell$, where $\ell$ is the number of the components of $L$.
\end{itemize}
We call such a square a \emph{3-valued square} for the function $F_{\nu}(L)$. 
The concept of $3$-valued square can be generalized as follows.
\begin{defin}
Fix $\ell \geq 1$, and consider a bounded function $F:\mathbb{Z}^\ell \to \mathbb{R}$. A \emph{$(k+1)$-valued cube} for $F$ is a $k$-dimensional cube in $\mathbb{Z}^\ell$ with edges of length $1$, such that:
\begin{itemize}
    \item[$\triangleright$] $F$ assumes at least $k+1$ different values on the vertices of the cube.
    \item[$\triangleright$] The maximal and the minimal values of $F$ on the cube are attained exactly once; 
    \item[$\triangleright$] The maximal value of $F$ on the cube is equal to the maximal value of $F$.
\end{itemize}
\end{defin}
Notice that a $3$-valued square is a $3$-valued cube.
The existence of a $3$-valued square for $F_{\nu}(L)$ and $\overline F_{\nu}(L)$ in the case $L$ is the split union of two links can be generalized as follows. 
\begin{prop}
 \label{prop:square}
 Suppose that $L$ is a link with $\ell_s $ split components. Then, $F_{\nu}(L)$ and $\overline F_{\nu}(L)$ have at least one $(\ell_s +1)$-valued cube, for each slice-torus link invariant $\nu$. 
\end{prop}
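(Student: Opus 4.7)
The plan is to exploit the fact that Whitehead doubling commutes with split union, so as to reduce the assertion to the case of a single non-split component, and then to build the desired cube as a product of carefully chosen edges, one per split factor, each descending from that factor's maximum value. Write $L = L_{1}\sqcup\cdots\sqcup L_{\ell_{s}}$ as its split union into non-split pieces, and identify $\mathbb{Z}^{\ell}$ with $\prod_{i=1}^{\ell_{s}}\mathbb{Z}^{\ell_{i}}$. Since $W_{\pm}(L_{1}\sqcup L_{2}, (\underline{t},\underline{s})) = W_{\pm}(L_{1},\underline{t})\sqcup W_{\pm}(L_{2},\underline{s})$, Property (B) immediately gives the additive decomposition
\[F_{\nu}(L)(\underline{t}_{1},\ldots,\underline{t}_{\ell_{s}}) \;=\; \sum_{i=1}^{\ell_{s}} F_{\nu}(L_{i})(\underline{t}_{i}),\]
and analogously for $\overline{F}_{\nu}(L)$.

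For each $i$ I then produce an edge $\{\underline{a}_{i}, \underline{a}_{i}+\underline{e}_{j_{i}}\}\subset\mathbb{Z}^{\ell_{i}}$ with $F_{\nu}(L_{i})(\underline{a}_{i}) = \ell_{i}$ and $F_{\nu}(L_{i})(\underline{a}_{i}+\underline{e}_{j_{i}}) < \ell_{i}$. The superlevel set $U_{i} := F_{\nu}(L_{i})^{-1}(\ell_{i})$ is non-empty by Theorem \ref{teo:maximum}, a proper subset of $\mathbb{Z}^{\ell_{i}}$ by Proposition \ref{prop:step}, and downward-closed in the componentwise order because $F_{\nu}(L_{i})$ is non-increasing (Theorem \ref{teo:decreasing}). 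Picking any $\underline{x}_{0}\in U_{i}$ and any $\underline{z}\notin U_{i}$, the meet $\underline{y}_{0} := \underline{x}_{0}\wedge\underline{z}$ lies in $U_{i}$ but is distinct from $\underline{z}$, so any monotone path of unit positive steps from $\underline{y}_{0}$ to $\underline{z}$ must contain a first edge crossing out of $U_{i}$; this is the edge I take. Theorem \ref{teo:decreasing} then guarantees that $d_{i} := \ell_{i} - F_{\nu}(L_{i})(\underline{a}_{i}+\underline{e}_{j_{i}}) \in (0,1]$.

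Setting $C := \prod_{i=1}^{\ell_{s}}\{\underline{a}_{i},\, \underline{a}_{i}+\underline{e}_{j_{i}}\}\subset\mathbb{Z}^{\ell}$, the additive decomposition shows that the value of $F_{\nu}(L)$ at the vertex indexed by $\varepsilon\in\{0,1\}^{\ell_{s}}$ is exactly $\ell - \sum_{i:\,\varepsilon_{i}=1} d_{i}$. Since every $d_{i}$ is strictly positive, the maximum $\ell$ is attained on $C$ only at $\varepsilon = (0,\ldots,0)$ and the minimum $\ell - \sum_{i} d_{i}$ only at $\varepsilon = (1,\ldots,1)$; flipping the coordinates from $0$ to $1$ one at a time produces a strictly decreasing chain of $\ell_{s}+1$ values, hence at least $\ell_{s}+1$ distinct values on the vertices of $C$. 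Since $\max F_{\nu}(L) = \ell$ by Theorem \ref{teorema:bounds_on_t_nu}, the maximum of $F_{\nu}(L)$ on $C$ equals the global maximum, so $C$ is an $(\ell_{s}+1)$-valued cube for $F_{\nu}(L)$. The argument for $\overline{F}_{\nu}(L)$ is identical, using Corollary \ref{cor:maximum} in place of Theorem \ref{teo:maximum} and taking the global maximum of $\overline{F}_{\nu}(L)$ to be $0$.

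The main delicate point is to align the three defining conditions of an $(\ell_{s}+1)$-valued cube in a single construction, and this is precisely why Theorem \ref{teo:maximum} (used to anchor the top of the cube at the global maximum) must be combined with the strict descent provided by Proposition \ref{prop:step} (used to ensure that each factor really drops, guaranteeing uniqueness of the extremal vertices and enough distinct values along the main chain).
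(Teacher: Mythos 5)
Your proof is correct and follows the same overall strategy as the paper's terse one-liner (additivity over split factors combined with a per-factor ``step'' from Proposition \ref{prop:step}), but you are more careful on a point the paper glosses over, and this extra care is warranted. The definition of an $(\ell_s+1)$-valued cube requires the maximum on the cube to equal the global maximum $\ell$ of $F_\nu(L)$; Proposition \ref{prop:step} as stated only supplies an edge whose top vertex has value in $(\ell_i-1,\ell_i]$, which need not be $\ell_i$ when $\nu$ is not integer-valued, so a naive product of those edges does not visibly anchor the cube's top at $\ell$. Your lattice argument --- taking a point in the max level set $U_i$ (non-empty by Theorem \ref{teo:maximum}, downward-closed by Theorem \ref{teo:decreasing}), a point outside it (furnished by Proposition \ref{prop:step}), their meet, and the first edge of a monotone path that leaves $U_i$ --- pins each factor's top vertex at exactly $\ell_i$, after which additivity and the strict positivity of each drop $d_i$ give the uniqueness of the extremal vertices, the $\ell_s+1$ distinct values along the main chain, and the correct global maximum. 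So this is a rigorous realization of the paper's sketch rather than a genuinely different route. The one thing to tidy is the citation of Theorem \ref{teorema:bounds_on_t_nu} for ``$\max F_\nu(L)=\ell$'': that theorem only gives the upper bound $F_\nu(L)\leq\ell$; achievability is what you established via Theorem \ref{teo:maximum} at the vertex $\varepsilon=(0,\dots,0)$.
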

\begin{proof}
 It follows from Proposition \ref{prop:step}, and from the additivity of $F_{\nu}(L)$ and $\overline F_{\nu}(L)$ with respect to the disjoint union of links.
\end{proof}
This gives a criterion to obstruct the strong concordance with split links. In particular, this criterion requires only a partial computation of either $F_{\nu}(L)$ or $\overline F_{\nu}(L)$ (cf. Remark~\ref{rem:conctosplit}).
\begin{teo}\label{teo:split}
Let $L$ be a link. If there exists a slice-torus link invariant $\nu$ such that either $F_{\nu}(L)$ or $\overline F_{\nu}(L)$ do not admit any $(r+1)$-valued cube, then $L$ is not strongly concordant to any link with $r$ split components. 
\end{teo}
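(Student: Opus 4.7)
The strategy is to prove the contrapositive by combining the strong concordance invariance of the functions $F_{\nu}$ and $\overline{F}_{\nu}$ (Theorem \ref{teo:concordant}) with the existence of $(r+1)$-valued cubes for split links (Proposition \ref{prop:square}).

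Assume that $L$ is strongly concordant to a link $L'$ with $r$ split components. Since strongly concordant links have the same number of components, both $L$ and $L'$ have the same number $\ell$ of components, and the strong concordance induces a bijection between the component sets; using this bijection we may regard both $F_{\nu}(L)$ and $F_{\nu}(L')$ as functions on the same $\mathbb{Z}^{\ell}$. The first step is to invoke Theorem \ref{teo:concordant}, which tells us that for every $\underline{t}\in\mathbb{Z}^{\ell}$ the fully clasped Whitehead doubles $W_{+}(L,\underline{t})$ and $W_{+}(L',\underline{t})$ are strongly concordant (respecting the component ordering), and similarly for $W_{-}$. Because $\nu$ is a strong concordance invariant, this yields the pointwise equalities
\[F_{\nu}(L)(\underline{t})=F_{\nu}(L')(\underline{t}),\qquad \overline{F}_{\nu}(L)(\underline{t})=\overline{F}_{\nu}(L')(\underline{t}),\]
for all $\underline{t}\in\mathbb{Z}^{\ell}$. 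In other words, Whitehead doubling transports the strong concordance between $L$ and $L'$ into a pointwise identification of the associated functions.

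The second step is to apply Proposition \ref{prop:square} to the split link $L'$: since $L'$ has $r$ split components, both $F_{\nu}(L')$ and $\overline{F}_{\nu}(L')$ admit at least one $(r+1)$-valued cube. Transporting such a cube through the equality of functions established in the previous step, we obtain that both $F_{\nu}(L)$ and $\overline{F}_{\nu}(L)$ also admit at least one $(r+1)$-valued cube, contradicting the hypothesis. The theorem follows.

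There is no genuine obstacle here beyond careful bookkeeping; the real content is already packaged into Theorem \ref{teo:concordant} and Proposition \ref{prop:square}. The only minor subtlety is to make sure that the bijection between the components of $L$ and $L'$ induced by the strong concordance matches the reparametrization of the twisting vectors $\underline{t}\in\mathbb{Z}^{\ell}$, so that the equality $F_{\nu}(L)=F_{\nu}(L')$ actually holds on the nose and not merely up to a permutation of the coordinates; but since a permutation of coordinates preserves the notion of $(k+1)$-valued cube, even this identification is harmless.
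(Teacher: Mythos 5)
Your proof is correct and follows essentially the same route as the paper: the paper's own proof is a one-liner citing Theorem \ref{teo:concordant} (strong concordance invariance of $F_{\nu}$ and $\overline F_{\nu}$) together with Proposition \ref{prop:square}, which is exactly what you do, just spelled out in more detail (including the harmless observation that permuting coordinates preserves the notion of a $(k+1)$-valued cube).
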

\begin{proof}
 It is an immediate consequence of Theorem \ref{teo:concordant}, which states that $F_{\nu}$ and $\overline F_{\nu}$ are strong concordance invariants, and Proposition \ref{prop:square}.
\end{proof}
In particular, when $\nu$ is a $\Z$-valued slice-torus link invariant, we have that if an $\ell$-component link $L$ is strongly concordant to the disjoint union of $\ell$ knots then $F_{\nu}(L)$ has exactly one $(\ell+1)$-valued cube. 
Moreover, in this case $F_{\nu}$ also assumes the minimal value. 

\subsection{The link L8a9}
In this subsection we use the function $F^\prime_{\nu_{s}}$ to prove that the link $L = L8a9$ (see Figure \ref{L8a9}) is not strongly concordant to the positive Hopf link $H_+$.

Notice that $L$ is a non-split, alternating link with the same signature and linking matrix as $H_+$. 
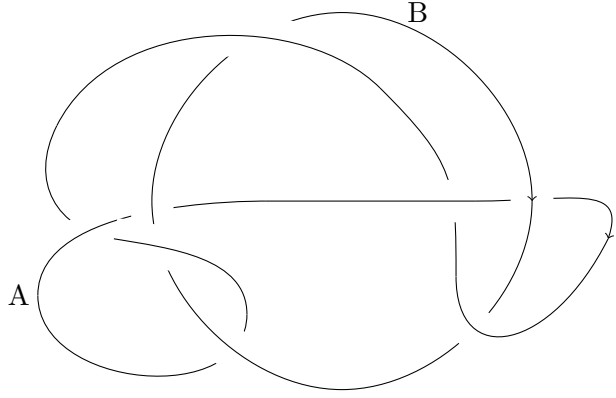
\begin{figure}[H]
  \centering
  \begin{tikzpicture}[scale = .5]

\draw[->] (8.5,0) .. controls +(2.5,0) and +(.5,1.5) .. (12,-1);
\pgfsetlinewidth{40*\pgflinewidth}
\draw[white] (10,0) .. controls +(0,2.5) and +(2.5,0) .. (5,5);\pgfsetlinewidth{.025*\pgflinewidth}

\draw[<-] (10,0) .. controls +(0,2.5) and +(2.5,0) .. (5,5);

\draw (-2.5,2)  .. controls  +(-.75,-1.5) and +(-1.5,.25) .. (-1,-1);

\draw (10,0) .. controls +(0,-2.5) and +(2.5,0) .. (5,-5);

\draw (8,-2) .. controls +(0,2.5) and +(2,-2) .. (6,3);

\draw (-3,-2.5)  .. controls  +(0,-2.5) and +(0,-2.5) .. (2.5,-3);
\pgfsetlinewidth{40*\pgflinewidth}
\draw[white]   (-3,-2.5)  .. controls  +(0,2.5) and +(-.5,-.5) .. (3,0) ;\pgfsetlinewidth{.025*\pgflinewidth}

\draw (-3,-2.5)  .. controls  +(0,2.5) and +(-.5,0) .. (3,0) ;

\pgfsetlinewidth{40*\pgflinewidth}
\draw[white]  (3,0)  .. controls  +(.5,0) and +(-.75,0) .. (8.5,0);\pgfsetlinewidth{.025*\pgflinewidth}
\draw (3,0)  .. controls  +(.5,0) and +(-.75,0) .. (8.5,0);

\pgfsetlinewidth{40*\pgflinewidth}
\draw[white]  (0,0) .. controls +(0,-2.5) and +(-2.5,0) .. (5,-5);\pgfsetlinewidth{.025*\pgflinewidth}
\draw (0,0) .. controls +(0,-2.5) and +(-2.5,0) .. (5,-5);

\pgfsetlinewidth{40*\pgflinewidth}
\draw[white]  (8,-2) .. controls +(0,-2.5) and +(-1.5,-3) .. (12,-1);\pgfsetlinewidth{.025*\pgflinewidth}
\draw (8,-2) .. controls +(0,-2.5) and +(-1.5,-3) .. (12,-1);

\pgfsetlinewidth{40*\pgflinewidth}
\draw[white]  (2.5,-3)  .. controls  +(0,1.5) and +(1.5,-.25) .. (-1,-1);\pgfsetlinewidth{.025*\pgflinewidth}
\draw (2.5,-3)  .. controls  +(0,1.5) and +(1.5,-.25) .. (-1,-1);
\draw (0,0) .. controls +(0,2.5) and +(-2.5,0) .. (5,5);
\pgfsetlinewidth{40*\pgflinewidth}
\draw[white]   (-2.5,2)  .. controls  +(1.5,3) and +(-2,2) .. (6,3);\pgfsetlinewidth{.025*\pgflinewidth}
\draw (-2.5,2)  .. controls  +(1.5,3) and +(-2,2) .. (6,3);
\node at (-3.5,-2.5) {A};
\node at (7,5) {B};
\end{tikzpicture}  
  \caption{A diagram of the link $L$, which is the link $L8a9$ in KnotAtlas.}
  \label{L8a9}
\end{figure}
This means that these two links have also the same Lee homology and the same filtered link Floer homology (see \cite{Cavallo4,Cavallo,Lee05}). It follows that these links have the same $s$ and $\tau$ invariants.
\begin{prop}\label{prop:L8a9andHopf}
 The function $F^\prime_{\nu_{s}}(L;A)$ differs from $F^\prime_{\nu_{s}}(H_\pm)$, and thus there is no strong concordance between the links $L$ and $H_+$. 
\end{prop}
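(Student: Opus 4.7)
The plan is to exhibit a specific integer $t_0$ for which $F^\prime_{\nu_s}(L;A)(t_0) \neq F^\prime_{\nu_s}(H_\pm)(t_0)$; the non-existence of a strong concordance then follows at once from Theorem \ref{teo:concordant}, once we note that the two components of $H_\pm$ are interchanged by an ambient isotopy, so the reduced function does not depend on the chosen component, while any strong concordance from $L$ to $H_+$ would have to match $A$ with some component of $H_+$.

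As a preliminary reduction, one checks by inspection of Figure \ref{L8a9} that both components $A$ and $B$ of $L$ are unknotted. Consequently, for both $L$ and $H_\pm$ the ``base value'' $\nu_0$ appearing in Theorem \ref{teorema:bounds_on_t_nu} equals $\nu_s(\bigcirc)=0$, so both $F^\prime_{\nu_s}(L;A)$ and $F^\prime_{\nu_s}(H_\pm)$ take values in $[0,1]$. Since $\nu_s$ is $\mathbb Z$-valued these functions assume only the values $0$ and $1$, and by Theorem \ref{teo:decreasing} they are non-increasing. Thus each is completely determined by the unique integer at which it drops from $1$ to $0$, and it suffices to show that these transition integers do not coincide.

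The core of the argument is then the computation of $s\bigl(W^\prime_+(H_\pm, t_0;U)\bigr)$ and $s\bigl(W^\prime_+(L, t_0; A)\bigr)$ for a conveniently chosen small value $t_0$. Both follow the scheme from Section \ref{section:example}: one draws the reduced Whitehead double from the given diagram, verifies that the only canonical generators landing in homological degree zero are $\mathbf v_{\pm o_D}$ (so that Proposition \ref{teo:s} applies), and then reads off the $s$-invariant as the minimal quantum filtration level of $H^0_{\mathrm{Lee}}$. Over $\mathbb Q$ this filtration level is extracted from Khovanov homology via the knight-move pairings, exactly as in the Hopf link computations of Section \ref{section:example}. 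For $H_\pm$ the resulting diagram is small and the computation can be done by hand or with minimal computer assistance; for $L$, one applies the same procedure using a computer algebra implementation, for instance the \textsf{KnotTheory} package in \textsf{Mathematica}, to the diagram for $W^\prime_+(L,t_0;A)$ built from Figure \ref{L8a9}.

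The main obstacle is a practical one: the reduced Whitehead doubles of $L$ quickly have many crossings, so the Khovanov computations are heavy, and $t_0$ must be chosen carefully to keep the diagram tractable while still lying near the transition loci so that the two values can actually differ. Natural first candidates are $t_0 \in \{-1,0,1\}$. A secondary subtlety is to verify, in each case, the hypothesis of Proposition \ref{teo:s} on the orientations contributing to homological degree zero, which is however a direct inspection of the cube of resolutions.
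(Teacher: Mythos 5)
Your strategy is exactly the one the paper follows: fix a twist parameter, compute $s$ of the two reduced Whitehead doubles via Khovanov homology, knight-move pairings and Proposition~\ref{teo:s}, and conclude from Theorem~\ref{teo:concordant} (together with the observation that the components of the Hopf link are interchangeable) that the links cannot be strongly concordant. Your preliminary reductions are also correct and a useful clarification of what the paper leaves implicit: the non-doubled component is unknotted, so $\nu_0=0$, the functions are $\mathbb{Z}$-valued with range $\{0,1\}$, are non-increasing, and hence are each determined by a single transition integer.

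The gap is that the proof stops at the strategy. You never commit to a $t_0$, never carry out (or even assert the outcome of) the Khovanov homology computation, and never exhibit a value at which the two functions actually differ. The result is genuinely a computational fact: $L8a9$ and $H_+$ share linking matrix, signature, $s$, $\tau$, and even Lee homology, so it is \emph{not} a priori clear that the Whitehead-double functions must distinguish them, and nothing in your argument up to that point forces the transition integers to be different. To close the gap one must, as the paper does, take $t_0=1$, compute the Khovanov homology of $W^\prime_+(L,1;A)$ and $W^\prime_+(H_\pm,1)$ (recorded in Table~\ref{figure:Kh-reduced-L8a9}), verify the hypothesis of Proposition~\ref{teo:s}, read off $s\bigl(W^\prime_+(L,1;A)\bigr)=1$ versus $s\bigl(W^\prime_+(H_\pm,1)\bigr)=-1$, and hence $F^\prime_{\nu_s}(L;A)(1)=1\neq 0 = F^\prime_{\nu_s}(H_\pm)(1)$. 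Without this explicit computation the proposal is a correct plan but not a proof.
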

\begin{proof}
 Let us consider the reduced Whitehead doubles $W^\prime_+(L,1;A)$ and $W^\prime_+(H_\pm,1)$. We omitted the component in the latter reduced Whitehead double since the choice of the component for the Hopf link is immaterial (as the results are isotopic).

 Similarly to what we did in the previous subsection, the Khovanov homologies shown in Table \ref{figure:Kh-reduced-L8a9} allow us to determine that 
 \[\dim\:\text{Gr}_{\mathcal{F}}^{j}H_{Lee}^{0}(W_+(L,1;A))=\left\{\begin{aligned}
 &2\:\:\:\:\:\text{if }j=0,2\\
 &0\:\:\:\:\:\text{otherwise}\end{aligned}\right.\]
 and
 \[\dim\:\text{Gr}_{\mathcal{F}}^{j}H_{Lee}^{0}(W_+(H_\pm,1))=\left\{\begin{aligned}
 &2\:\:\:\:\:\text{if }j=0\\
 &1\:\:\:\:\:\text{if }j=-2,2\\
 &0\:\:\:\:\:\text{otherwise}\end{aligned}\right.\]
 It follows from Theorem \ref{teo:s} that $F_{\nu_{s}}^\prime(L;A)(1)=(s(W_+^\prime(L,1;A)) + 1)/2=1$ and $F_{\nu_{s}}^\prime(H_\pm)(1)=(s(W_+^\prime(H_\pm,1)) + 1)/2 = 0$.
\end{proof}
Proposition \ref{prop:L8a9andHopf} tells us that the functions $F_\nu,\overline F_\nu,F^\prime_\nu$ and $\overline F^\prime_\nu$ can effectively give more information, as concordance invariants, than $\nu$ and the linking matrix. In particular, we have also shown that $F^\prime_{\nu_s}(L)$ is not determined by the Lee homology of $L$.

\begin{proof}[Proof of Theorem \ref{teo:Fprimeandnu}]
The result follows from Proposition \ref{prop:L8a9andHopf}. 
\end{proof}

\appendix

\section{Tables of Khovanov homology}
\subsection{Tables relative to Section \ref{section:example}}
In this subsection we collected the tables of the Khovanov homology used in the computations in Section \ref{section:example}. We have highlighted in red the column corresponding to the homological degree $0$, which is the homological degree where Lee homology is concentrated in these cases.
All the homologies are computed with coefficients in $\mathbb{Q}$.
\begin{table}[H]
\centering
\begin{tikzpicture}[scale = .4]
\draw[red, fill, opacity = .3] (0,-5) rectangle (1,6);
\draw[white] (0,-5) rectangle (1,6);
\node at (-5.5,-4.5){1};
\node at (4.5,5.5){1};
\node at (-4.5,-3.5){1};
\node at (3.5,4.5){1};
\node at (-4.5,-2.5){1};
\node at (-3.5,-2.5){1};
\node at (3.5,3.5){1};
\node at (2.5,3.5){2};
\node at (1.5,3.5){1};
\node at (-3.5,-1.5){1};
\node at (-2.5,-1.5){2};
\node at (2.5,2.5){1};
\node at (-1.5,-1.5){2};
\node at (1.5,2.5){1};
\node at (0.5,2.5){1};
\node at (-2.5,-0.5){1};
\node at (-1.5,-0.5){1};
\node at (1.5,1.5){2};
\node at (-0.5,1.5){1};
\node at (0.5,1.5){3};
\node at (-1.5,0.5){2};
\node at (-0.5,0.5){3};
\node at (0.5,0.5){3};
\draw[opacity=.3, gray] (-6,-5) grid (5,6);
\draw[->] (-6,-5) -- (6,-5);
\draw[->] (-6,-5) -- (-6,7);
\node[left] at (-6,7) {$qdeg$};
\node[below] at (6,-5) {$hdeg$};
\foreach \j in {-5,...,5}{\node at (-7, \j  + 0.5) {\pgfmathparse{int(\j*2 + 0)} \pgfmathresult};}
\foreach \j in {-6,...,4}{\node at ( \j + .5 ,-6) {\j};}
\end{tikzpicture}
\caption{The Khovanov homology of the link $W_+(H_\pm,0,1)$.}
\label{figure:Kh+(0,1)}
\end{table}
\begin{table}[H]
    \centering
\begin{tikzpicture}[scale = .4]
\draw[fill, red, opacity=.3] (0,-7) rectangle (1,6);
\draw[white] (0,-7) rectangle (1,6);
\node at (-7.5,-6.5){1};
\node at (-6.5,-5.5){1};
\node at (-6.5,-4.5){1};
\node at (-5.5,-4.5){1};
\node at (4.5,5.5){1};
\node at (-5.5,-3.5){1};
\node at (-4.5,-3.5){2};
\node at (-3.5,-3.5){1};
\node at (-4.5,-2.5){1};
\node at (-3.5,-2.5){1};
\node at (-2.5,-2.5){1};
\node at (-2.5,-2.5){1};
\node at (2.5,3.5){2};
\node at (-1.5,-2.5){1};
\node at (-3.5,-1.5){2};
\node at (-2.5,-1.5){2};
\node at (2.5,2.5){1};
\node at (-1.5,-1.5){2};
\node at (1.5,2.5){2};
\node at (-2.5,-0.5){1};
\node at (-1.5,-0.5){2};
\node at (1.5,1.5){2};
\node at (-0.5,1.5){1};
\node at (-0.5,-0.5){2};
\node at (0.5,1.5){2};
\node at (0.5,-0.5){1};
\node at (-1.5,0.5){1};
\node at (1.5,0.5){1};
\node at (-0.5,0.5){2};
\node at (0.5,0.5){4};
\node at (3.5,3.5){1};
\draw[opacity=.3, gray] (-8,-7) grid (5,6);
\draw[->] (-8,-7) -- (-8,7);
\draw[->] (-8,-7) -- (6,-7);
\node[left] at (-8,7) {$qdeg$};
\node[below] at (6,-7) {$hdeg$};
\foreach \j in {-7,...,5}{\node at (-9, \j  + 0.5) {\pgfmathparse{int(\j*2 + 0)} \pgfmathresult};}
\foreach \j in {-8,...,4}{\node at ( \j + .5 ,-8) {\j};}
\end{tikzpicture}    
\hspace{0.5cm}    
\begin{tikzpicture}[scale = .4]
\draw[fill, red, opacity=.3] (0,-7) rectangle (1,4);
\draw[white] (0,-7) rectangle (1,4);

\node at (-7.5,-6.5){1};
\node at (-6.5,-5.5){1};
\node at (-6.5,-4.5){1};
\node at (-5.5,-4.5){1};
\node at (-5.5,-3.5){1};
\node at (-4.5,-3.5){2};
\node at (-3.5,-3.5){2};
\node at (-4.5,-2.5){1};
\node at (-3.5,-2.5){1};
\node at (2.5,3.5){2};
\node at (-3.5,-1.5){2};
\node at (-2.5,-1.5){3};
\node at (-1.5,-1.5){2};
\node at (-2.5,-0.5){1};
\node at (-1.5,-0.5){1};
\node at (-0.5,-0.5){2};
\node at (-0.5,1.5){1};
\node at (-0.5,-0.5){2};
\node at (0.5,1.5){3};
\node at (0.5,-0.5){1};
\node at (-1.5,0.5){1};
\node at (-0.5,0.5){2};
\node at (0.5,0.5){2};
\node at (1.5,1.5){2};
\draw[opacity=.3, gray] (-8,-7) grid (3,4);
\draw[->] (-8,-7) -- (4,-7);
\draw[->] (-8,-7) -- (-8,5);
\node[below] at (4,-7) {$hdeg$};
\node[left] at (-8,5) {$qdeg$};
\foreach \j in {-7,...,3}{\node at (-9, \j  + 0.5) {\pgfmathparse{int(\j*2 + 0)} \pgfmathresult};}
\foreach \j in {-8,...,2}{\node at ( \j + .5 ,-8) {\j};}
\end{tikzpicture}
\caption{The Khovanov homology of the links $W_+(H_\pm,1,1)$ and $W_+(H_\pm,0,2)$.}
\label{figure:Kh+(0,2)}
\end{table}

\begin{table}[H]
    \centering
\begin{tikzpicture}[scale = .32]
\draw[fill, red, opacity=.3] (0,-7) rectangle (1,4);
\draw[white] (0,-7) rectangle (1,4);
\node at (-5.5,-6.5){1};
\node at (-4.5,-5.5){1};
\node at (-4.5,-4.5){1};
\node at (-3.5,-4.5){1};
\node at (-2.5,-4.5){1};
\node at (-3.5,-3.5){1};
\node at (-2.5,-3.5){2};
\node at (-1.5,-3.5){1};
\node at (4.5,3.5){1};
\node at (-2.5,-2.5){1};
\node at (-1.5,-2.5){2};
\node at (-0.5,-2.5){1};
\node at (3.5,2.5){1};
\node at (-1.5,-1.5){2};
\node at (-0.5,-1.5){2};
\node at (0.5,-1.5){2};
\node at (3.5,1.5){1};
\node at (2.5,1.5){1};
\node at (1.5,-0.5){1};
\node at (-0.5,-0.5){1};
\node at (0.5,-0.5){3};
\node at (2.5,0.5){1};
\node at (1.5,0.5){2};
\node at (0.5,0.5){3};
\draw[opacity=.3, gray] (-6,-7) grid (5,4);
\draw[->] (-6,-7) -- (-6,5);
\draw[->] (-6,-7) -- (6,-7);
\node[left] at (-6,5) {$qdeg$};
\node[below] at (6,-7) {$hdeg$};
\foreach \j in {-7,...,3}{\node at (-7, \j  + 0.5) {\pgfmathparse{int(\j*2 + 0)} \pgfmathresult};}
\foreach \j in {-6,...,4}{\node at ( \j + .5 ,-8) {\j};}
\end{tikzpicture}
\hspace{0.5cm}
\begin{tikzpicture}[scale = .46]
\draw[fill, red, opacity=.3] (0,-13) rectangle (1,0);
\draw[white] (0,-13) rectangle (1,0);
\node at (-11.5,-12.5){1};
\node at (-10.5,-11.5){1};
\node at (-10.5,-10.5){1};
\node at (-9.5,-10.5){1};
\node at (-9.5,-9.5){1};
\node at (-8.5,-9.5){2};
\node at (-7.5,-9.5){1};
\node at (-8.5,-8.5){1};
\node at (-7.5,-8.5){1};
\node at (-6.5,-8.5){1};
\node at (-7.5,-7.5){2};
\node at (-6.5,-7.5){2};
\node at (-6.5,-6.5){1};
\node at (-5.5,-6.5){2};
\node at (-4.5,-6.5){3};
\node at (-5.5,-5.5){1};
\node at (-3.5,-5.5){1};
\node at (-2.5,-5.5){1};
\node at (-4.5,-4.5){1};
\node at (-3.5,-4.5){3};
\node at (-2.5,-4.5){2};
\node at (-2.5,-3.5){1};
\node at (-1.5,-3.5){2};
\node at (-1.5,-2.5){2};
\node at (0.5,-2.5){1};
\node at (-0.5,-1.5){1};
\node at (0.5,-1.5){2};
\node at (0.5,-0.5){1};
\draw[opacity=.3, gray] (-12,-13) grid (1,0);
\draw[->] (-12,-13) -- (-12,1);
\draw[->] (-12,-13) -- (2,-13);
\node[left] at (-12,1) {$qdeg$};
\node[below] at (2,-13) {$hdeg$};
\foreach \j in {-13,...,-1}{\node at (-13, \j  + 0.5) {\pgfmathparse{int(\j*2 + 0)} \pgfmathresult};}
\foreach \j in {-12,...,0}{\node at ( \j + .5 ,-14) {\j};}
\end{tikzpicture}
\caption{The Khovanov homology of the links $ W_-(H_\pm, 0,0) $ and $ W_-(H_\pm,1,2 )$.}
\label{figure:Kh-(1,2)}
\end{table}

\subsection{Tables relative to Section \ref{section:examplesII}}
In this subsection we collected the tables of the Khovanov homology used in the computations in Section \ref{section:examplesII}.
\begin{table}[H]
    \centering
    \begin{tikzpicture}[scale = .425]
\draw[fill, red, opacity=.3] (0,-5) rectangle (1,5);
\draw[white] (0,-5) rectangle (1,5);
\node at (0.5,0.5){2};
\node at (0.5,1.5){2};
\node at (0.5,2.5){1};
\node at (-5.5,-4.5){1};
\node at (-4.5,-2.5){1};
\node at (-3.5,-2.5){1};
\node at (-2.5,-1.5){1};
\node at (-2.5,-0.5){1};
\node at (-1.5,0.5){1};
\node at (-1.5,-1.5){1};
\node at (-0.5,0.5){2};
\node at (1.5,1.5){1};
\node at (2.5,2.5){1};
\node at (2.5,3.5){1};
\node at (3.5,4.5){1};
\draw[opacity=.3, gray] (-6,-5.0) grid (4,5.0);
\draw[->] (-6,-5) -- (-6,6);
\draw[->] (-6,-5) -- (5,-5);
\node[left] at (-6,6) {$qdeg$};
\node[below] at (5,-5) {$hdeg$};
\foreach \j in {-5.0,...,4.0}{\node at (-7, \j  + 0.5) {\pgfmathparse{int(\j*2 + 0)} \pgfmathresult};}
\foreach \j in {-6,...,3}{\node at ( \j + .5 ,-6.0) {\j};}
\end{tikzpicture}
\hspace{0.5cm}
\begin{tikzpicture}[scale = .5]
\draw[fill, red, opacity=.3] (0,-4) rectangle (1,4);
\draw[white] (0,-4) rectangle (1,4);
\node at (0.5,0.5){2};
\node at (0.5,-0.5){1};
\node at (0.5,1.5){2};
\node at (-3.5,-3.5){1};
\node at (-2.5,-1.5){1};
\node at (-1.5,-1.5){1};
\node at (-0.5,0.5){1};
\node at (-0.5,-0.5){1};
\node at (1.5,1.5){1};
\node at (2.5,3.5){1};
\draw[opacity=.3, gray] (-4,-4.0) grid (3,4.0);
\draw[->] (-4,-4) -- (-4,5);
\draw[->] (-4,-4) -- (4,-4);
\node[left] at (-4,5) {$qdeg$};
\node[below] at (5,-4) {$hdeg$};
\foreach \j in {-4.0,...,3.0}{\node at (-5, \j  + 0.5) {\pgfmathparse{int(\j*2 + 0)} \pgfmathresult};}
\foreach \j in {-4,...,2}{\node at ( \j + .5 ,-5.0) {\j};}
\end{tikzpicture}
 \caption{The Khovanov homology of the links $ W^\prime_+(L,1;A)$ and $ W^\prime_+(H_\pm,1)$.}
\label{figure:Kh-reduced-L8a9}
\end{table}

\end{document}